\newtheorem{theorem}{Theorem}[section]
\newtheorem{corollary}[theorem]{Corollary}
\newtheorem{lemma}[theorem]{Lemma}
\newtheorem{proposition}[theorem]{Proposition}
\theoremstyle{definition}
\newtheorem{definition}{Definition}[section]
\theoremstyle{remark}
\newtheorem{remark}{Remark}[section]
\newcommand{\abs}[1]{\left\vert#1\right\vert}
\newcommand{\Rnu}{\mathbb{R}}
\newcommand{\NN}{\mathbb{N}}
\def\aa#1{ \begin{align*} #1 \end{align*} }
\def\aaa#1{ \begin{align} #1 \end{align} }
\def\mm#1{ \begin{multline*} #1 \end{multline*} }
\def\mmm#1{ \begin{multline} #1 \end{multline} }
\def\begeq{\begin{equation} \begin{cases}} 
\def\endeq{ \end{cases} \end{equation}}
\def\eq#1{ \begeq #1 \endeq }
\renewcommand{\vec}[2]{\ensuremath{\begin{pmatrix} #1 \\ #2\end{pmatrix}}}
\def\bege{\begin{equation*} \begin{cases}} 
\def\ende{ \end{cases} \end{equation*}}
\def\eqq#1{ \bege #1 \ende}
\def\eqm#1{ \begin{equation*} \begin{aligned} #1 \end{aligned}\end{equation*}}
\newcommand{\eps}{\varepsilon}
\newcommand{\gt}{\geqslant}
\newcommand{\lt}{\leqslant}
\newcommand{\te}{\theta}
\newcommand{\sub}{\subset}
\newcommand{\dl}{\updelta}
\newcommand{\al}{\alpha}
\newcommand{\gm}{\gamma}
 \newcommand{\la}{\lambda}
 \newcommand{\sg}{\sigma}
\newcommand{\om}{\omega}
\newcommand{\mc}{\mathcal}
\newcommand{\C}{{\rm C}}
\newcommand{\td}{\tilde}
\newcommand{\cpp}{\varkappa}
\newcommand{\teta}{\upvartheta}
\newcommand{\x}{\times}
\newcommand{\mto}{\mapsto}
\newcommand{\rf}{\eqref}
\newcommand{\bi}{\begin{itemize}}
\newcommand{\ei}{\end{itemize}}
\DeclareMathOperator{\ind}{\mathbbm{1}}
\newcommand{\lap}{\Delta}
\newcommand{\lb}{\label}
\newcommand{\fdot}{\,\cdot\,}
\newcommand{\dlt}{\vartheta}
\def\Rnu{{\mathbb R}}
\def\Nnu{{\mathbb N}}
\def\ffi{\varphi}
\title{Existence, multiplicity and classification results for solutions to $k$-Hessian equations with general weights}
\author{Jo\~ao Marcos do \'O \and Justino S\'anchez \and Evelina Shamarova}
\newcommand{\Addresses}{{
  \bigskip
  \footnotesize
  
  Jo\~ao Marcos do \'O (Corresponding author), \textsc{Departamento de Matem\'atica, Universidade Federal da Para\'iba, Jo\~ao Pessoa, Brazil}\par\nopagebreak
  \textit{E-mail}: \texttt{jmbo@pq.cnpq.br}

  \medskip

Justino S\'anchez, \textsc{Departamento de Matem\'{a}ticas, Universidad de La Serena,
Avenida Cisternas 1200, La Serena, Chile}\par\nopagebreak
  \textit{E-mail}: \texttt{jsanchez@userena.cl}

  \medskip

  Evelina Shamarova, \textsc{Departamento de Matem\'atica, Universidade Federal da Para\'iba, Jo\~ao Pessoa, Brazil}\par\nopagebreak
  \textit{E-mail}: \texttt{evelina.shamarova@academico.ufpb.br}

}}
\date{}
\begin{document}

\maketitle

\vspace{-6mm}


\begin{abstract}
{The present paper is concerned} with {negative classical solutions to} a $k$-Hessian equation involving a nonlinearity with a general weight 
\begin{equation}
\lb{Eq:Ma:0}
\tag{$P$}
\begin{cases}
S_k(D^2u)= \la  \rho(\abs{x}) (1-u)^q &\mbox{in }\;\; B,\\
u=0 &\mbox{on }\partial B.
\end{cases}
\end{equation}
Here, $B$ denotes the unit ball in $\Rnu^n\!$, $n>2k$, $\la$ is a positive parameter and $q>k$ with $k\in\NN$.  The function $r\rho'(r)/\rho(r)$ satisfies very general conditions in the radial direction $r=\abs{x}$. {We show the existence, nonexistence, and multiplicity of solutions to Problem \rf{Eq:Ma:0}. The main technique used for the proofs is a phase-plane analysis related to a non-autonomous dynamical system associated to the equation in \rf{Eq:Ma:0}}. Further, using the aforementioned non-autonomous system,
we give a comprehensive characterization of $P_2$-, $P_3^+$-, $P_4^+$-solutions to the related problem
\begin{equation}
\lb{P2P3}
\tag{$\hat P$}
\begin{cases}
S_k(D^2 w)=  \rho(\abs{x}) (-w)^q,\\
w<0,  
\end{cases}
\end{equation}
given on the entire space $\Rnu^n$\!. In particular, we describe new classes of solutions: fast decay $P^+_3$-solutions and $P_4^+$-solutions.
\end{abstract}
2020 Mathematics Subject Classification. Primary: 35B33; Secondary: 34C37, 34C20, 35J62, 70K05.


\noindent {\em Keywords:}\, $k$-Hessian operator; Radially symmetric solutions; Non-autonomous system; Phase space analysis; Critical exponents; 
Singular solution; Intersection number; $P_2$\,-,  $P_3^+$-, $P_4^+$-solutions.
{\footnotesize}


\section{Introduction}
This paper aims to address the question of the existence, nonexistence and multiplicity of radially symmetric bounded solutions to  Problem \rf{Eq:Ma:0} involving a $k$-Hessian. 
Furthermore, {it aims} to characterize {$P_2$\,-, $P_3^+$-, and $P_4^+$-solutions} to Problem \rf{P2P3} (see the definition in Section \ref{tos}) {and to find
	their asymptotic behavior in a neighborhood of $+\infty$.}

{The motivation for studying problems like \rf{Eq:Ma:0} and \rf{P2P3} is twofold.  First of all, in the Laplacian case, Problem \rf{P2P3} appears in connection with the stationary case of the Vlasov-Poisson system describing stellar dynamics. More specifically, one needs to construct a stationary  spherically symmetric stellar dynamics model 
	$(\Phi, \rho, U)$, where $\Phi$ is the first integral of the stationary Vlasov-Poisson system,
	$\rho$ is a local density, and $U$ is the Newtonian potential, 
	which turns out to be a positive solution 
	to the radial version of the equation $\lap U + h_\ffi(|x|, U)=0$ (see \cite{batt86}). 
	Above, the function $\ffi$ is to be fixed a priori, and the model 
	$(\Phi, \rho, U)$ is constructed via this function $\ffi$, while
	different choices of $\ffi$ lead to different stellar dynamic models (see \cite{batt86}).
	In \cite{BattLi}, the authors considered only two particular types of stellar dynamics models
	determined by two particular choices of the function $\ffi$.
	This led to the Matukuma and Emden-Fowler equations, i.e.,
	to two particular types of our weight function $\rho$. Thus,
	even in the Laplacian case, Problem \rf{P2P3}  appears to be an interesting
	object to study as long as general weight functions $\rho$ are concerned.}
{The second goal, which is related to the presence of the $k$-Hessian operator on the left-hand sides
	of \rf{Eq:Ma:0} and \rf{P2P3}, is motivated by the extension of the results of  \cite{MiSV19} and \cite{SaVe17} to a more general class of weights.}

$k$-Hessian equations constitute an important class of fully nonlinear PDEs, so they have been studied 
	by many authors \cite{CaNS85, Dai17, OSS22, OOUb19,  MiSV19, NaSa20,
		SaVe16, SaVe17, TrXu97, TrXu99, Tso90, Wang09, WaLe19, Wei16, Wei17}. 
When restricted to so-called $k$-admissible solutions 
(see the definition in Section \ref{sec2}), these equations are elliptic, and their solutions enjoy properties similar to those of  semilinear equations with the Laplacian.

%

Following the strategy developed in \cite{SaVe17} and using ideas from \cite{Miya16},  
recently, in \cite{MiSV19}, it has been investigated the existence and multiplicity of radially symmetric bounded solutions for problem \rf{Eq:Ma:0} with a particular weight of Matukuma-type given by 
$\rho(\abs{x})=|x|^{\mu-2}(1+|x|^2)^{-\frac{\mu}{2}}$, where $\mu\gt 2$ is an additional parameter and $q>k$. 
Furthermore, in \cite{BattLi},
the authors studied $P_2$\,- and $P_3^+$-solutions
for a particular case of \rf{P2P3}, where 
the equation was in the dimension three, containing the Laplacian, and the 
source was of Matukuma-type.

In \cite{MiSV19},  where the weight $\rho$ is not a pure power as in \cite{SaVe17}, the problem was reduced to a two-dimensional non-autonomous Lotka-\!Volterra system, which was considered as an asymptotically autonomous system in the sense of  H.~Thieme \cite{Thie94}. In studying {bounded (regular)} solutions, we note that a structural property of the equations considered in \cite{SaVe16, SaVe17}, as the scaling invariance, played an important role. The similar property in \cite{MiSV19} is an asymptotic scaling invariance of the equation. On the other hand, to obtain multiplicity, a key point is to construct a singular solution and study the intersection number between regular and singular solutions, which is a common strategy for studying these relationships.

We mention that a dynamical system approach to a class of radial weighted fully 
nonlinear equations involving Pucci extremal operators was recently applied in 
\cite{Maia}. The resulting dynamics are induced by an autonomous quadratic system, obtained after a suitable transformation, similar to our change of variables. Nevertheless, their approach is quite simple, and the presence of the weight $|x|^a$ in the equation does not produce additional difficulties. However, in our case, analyzing the flow generated by the non-autonomous quadratic dynamical system is more delicate. Furthermore, our proofs do not involve any energy function obtained via Pohozaev-type identities.

  %

An important ingredient of the current study is identifying the function $R(r) ={r\rho'(r)}/{\rho(r)}$ and understanding that the existence of the finite limit $l_0 = \lim_{r\to 0} R(r)$ is responsible for the multiplicity of solutions to Problem \rf{Eq:Ma:0}. We remark that, even in light of \cite{MiSV19} and \cite{SaVe17}, the aforementioned observation is not obvious; importantly, it led to important generalizations and further developments  of the results of \cite{MiSV19} and \cite{SaVe17}.
As such, the existence and the value of the finite limit $l_\infty = \lim_{r\to +\infty} R(r)$ is responsible for the behavior of solutions to Problem \rf{P2P3} in a neighborhood of $+\infty$, {as well for their classification as a $P_2$\,-, $P_3^+$-, or $P_4^+$-solutions}. We {reinforce} that the previous results on the existence and multiplicity of solutions to Problem   \rf{Eq:Ma:0} \cite{MiSV19, SaVe17} were obtained only for weight functions of particular types, e.g., the Matukuma weight, so the
important advance of this work is describing the most general class of weights $\rho(\fdot)$ to  which the results of \cite{MiSV19} and \cite{SaVe17}  can be extended. On the other hand, identifying the function $R(r)$ and specifying its behavior as $r\to +\infty$ allows to use the associated Lotka-\!Volterra system for studying the behavior of solutions to the related Problem \rf{P2P3} at a neighborhood of $+\infty$
and for classifying respective solutions as $P_2$, $P_3^+$, or $P_4^+$. 
This classification, in particular, is valid for the Matukuma weight considered in \cite{MiSV19} and for the power weight $|x|^\sg$, considered in \cite{SaVe17},  so it complements the latter results.

In characterizing $P_3^+$- and $P_4^+$-solutions to Problem \rf{P2P3}
an important role is played  by a certain parameter $\dl$,  which, in the case of $P_3^+$- and $P_4^+$-solutions,   determines the rate of decay.
The particular case considered in \cite{BattLi} corresponds to the situation $\dl = 0$  and leads to $P_3^+$-solutions of slow log-like decay. 
{To the authors' knowledge,  $P_3^+$-solutions of fast algebraic decay, corresponding to the case $\dl>0$,   and $P_4^+$-solutions, corresponding to the case $\dl<0$, were first introduced and characterized in the present work. }

%

Thus, in this paper, we proceed further to study weighted problems for Hessian equations extending, particularly, the results obtained in \cite{BattLi, MiSV19, SaVe16, SaVe17} by considering a large class of weights. Furthermore, with respect to the work \cite{BattLi}, 
apart from more general weights, our results on $P_2$- , $P_3^+$-, and $P_4^+$-solutions  are valid for space dimensions $n\gt 3$ and numbers $k<\frac{n}2$, while in \cite{BattLi}, $n=3$ and $k=1$.

The change of variables introduced in \cite{SaVe17} (see Subsection \ref{chanofv}  and Appendix for details) reduces the equation in \rf{Eq:Ma:0} to the non-autonomous Lotka-\!Volterra system:
\begin{equation}\lb{LVSrho}
\tag{$TS_{q,\nu}$}
\begin{cases}
\frac{dx}{dt}=x\left(\nu(t)-x-q y\right),
\,\\
\frac{dy}{dt} = y\left(-\frac{n-2k}{k}+\frac{x}{k}+y\right),
\end{cases}
\end{equation}
where 
\begin{equation*}
\nu(t)=n+ R(r) \quad\text{and}\quad  t=\ln (r).
\end{equation*}
The cases $\rho(r)\equiv 1$ and $\rho(r)=r^\sigma\; (\sigma\gt 0)$ were recently studied in \cite{SaVe16} and \cite{SaVe17}, respectively. Note that both cases lead to an autonomous Lokta-\!Volterra system. Hence, it is interesting to examine what happens with the solutions when the function $R(r)$  is not constant. Note that this function appears naturally in the system \rf{LVSrho} as the definition of $\nu(t)$ shows. On the other hand, an important exponent appearing in the main results of \cite{SaVe16} and \cite{SaVe17} is 
\begin{equation}\lb{Tsoexpo}
	q^*(k,\sigma):=\frac{(n+2)k+\sigma (k+1)}{n-2k}, \;\;\sigma\gt 0.
\end{equation}
If the function $R$ is not  constant, then the \lq\lq critical exponent\rq\rq $\frac{(n+2)k+R(r) (k+1)}{n-2k}$ will vary with $r$ and the structure of the solution set will be more complex. A nonlinear model where this situation occurs was studied in \cite{MiSV19}. We point out that the results obtained here cannot be derived using the standard theory by the classical Emden-Fowler transformation.

%

In the first part of the paper, {dedicated to Problem \rf{Eq:Ma:0},} we assume that the weight function 
$\rho$ satisfies the conditions $(\rho. 1)$--$(\rho. 3)$:
\begin{itemize}
\item[$(\rho. 1)$] $\rho\in \C^2(0,\infty)\cap \C[0,\infty)$ 
with $\rho(r)>0$ for $r>0$. 
\item[$(\rho. 2)$] For the function $R(r)=r\frac{\rho'(r)}{\rho(r)}$ it holds {that}
the limit $l_0=\lim_{r\to 0}R(r)$ exists and one of the conditions, (1) or (2), is fulfilled
\bi
\item[(1)] $l_0>R(r)$ for all $r>0$ and $q\gt q^*(k,l_0)$,
\item[(2)] $l_0\gt R(r)$ for all $r>0$ and $q > q^*(k,l_0)$.
\ei
where $q^*(k,l_0)$ is defined by \rf{Tsoexpo}.
\item[$(\rho. 3)$] The function $K(r):=r^{-l_0}\rho(r)$ satisfies the condition
 $0<K(r) < L$\, for all $r>0$ and for some constant $L>0$.
 \end{itemize}
 In Remark \ref{rk1111}, we will show
the existence of the limit $\lim_{r\to 0} K(r)>0$, denoted by $K(0)$.

When the limits $\nu_{\pm}: =\lim_{t\to \pm \infty} (n+R(e^t))$ exist, we may consider the system \rf{LVSrho} as an asymptotically autonomous system in the sense of Thieme \cite{Thie94}. 
We will denote these autonomous {systems} by $(LVS_{q,\nu_{+}})$
and $(LVS_{q,\nu_-})$, respectively. Thus, we can describe the flow of \rf{LVSrho} from this autonomous systems. To do so, we follow the same approach as in \cite{MiSV19},  i.e., we use dynamical-systems tools, the intersection number between a regular and a singular solution and the method of super and subsolutions. 
{We mention that system $(LVS_{q,\nu_-})$ coincides with the
autonomous Lotka-\!Volterra system used for studying Problem \rf{Eq:Ma:0} with $\rho(\abs{x})=|x|^{l_0}$. In this case, the multiplicity of solutions is related to
the Tso and Joseph-Lundgren type exponents (see \cite{SaVe17})}. 
{On the other hand, 
two stationary} points of $(LVS_{q,\nu_-})$, denoted {by $P_4(\hat x, \hat y)$} 
and $P_3(n+l_0 , 0)$,  
are 
{relevant} to obtaining a singular solution and a bounded solution 
for the radial version of \rf{Eq:Ma:0}, denoted {by} \rf{Plambda}. 
{Namely, we prove 
that the trajectories of \rf{LVSrho} that start at the stationary point $P_3(n+l_0 , 0)$ are characterized by the existence of bounded solutions to Problem \rf{Plambda} 
(see Proposition \ref{prop35}). In turn, the trajectories of \rf{LVSrho} that start at the stationary point ${P_4}(\hat x, \hat y)$ yield the existence of a singular solution to \rf{Plambda} for some $\la>0$
(see Proposition \ref{S3L2}).
%

%

In the second part of the paper, dealing with $P_2$\,-, $P_3^+$-, and 
$P_4^+$-solutions to Problem \rf{P2P3}, we use the above assumptions 
$(\rho. 1)$, $(\rho. 2)$ and the assumptions $(\rho. 4)$, $(\rho. 5)$ below.
Also, to characterize $P_3^+$- and $P_4^+$-solutions to Problem \rf{P2P3}, 
we introduce the parameter
\aaa{
\lb{dl1111}
\dl = -\frac{2k+l_\infty}{k},
} 
whose sign determines whether a solution is $P_3^+$ or $P_4^+$; moreover, 
$\dl$ characterizes  the rate of decay. 
Assumptions $(\rho. 4)$ and $(\rho. 5)$ read as follows:
\bi
\item[$(\rho. 4)$] The limit $l_\infty=\lim_{r\to\infty}R(r)$ exists 
and one of the conditions, (1) or (2), is fulfilled
\bi
\item[(1)] $l_\infty < l_0$ and $q \gt q^*(k,l_0)$,
\item[(2)] $l_\infty \lt l_0$ and $q > q^*(k,l_0)$.
\ei
\item[$(\rho. 5)$] There exists $\teta>0$ such that $R(r) - l_\infty = O(r^{-\teta})$ as $r\to +\infty$.
\ei
{In order to obtain a sharper asymptotic representation for
the component $x(t)$ of the orbit of \rf{LVSrho} associated to
a $P^+_3$-solution, we use the additional assumption
$(\rho. 6)$ which reads as follows:}
\bi
\item[$(\rho. 6)$] One of the conditions, (1) or (2), is fulfilled:
\bi
\item[(1)] $\lim_{r\to+\infty} r^\dl (R(r) - l_\infty)\in \Rnu$ and
$n+l_\infty > \dl$;
\item[(2)] $\lim_{r\to+\infty} r^\dl |R(r) - l_\infty|=+\infty$; 
the limit 
$\hat \nu = - \lim\limits_{r\to+\infty} \frac{\ln |R(r) - l_\infty|}{\ln r}$
exists and $n+l_\infty > \hat \nu$.
\ei
\ei

%

Examples of weights satisfying the above conditions are
$\rho(\abs{x})={a|x|^l}/{(\td a+|x|^\tau )}$ with $0<\tau\lt l$ and $a, \td a >0$,
$\rho(|x|) = c |x|^\sg$, $\sg>0$, and $\rho(x) = c$, where $c>0$ is a constant.
For more examples, see Remark \ref{rm2222}.

When Problem \rf{P2P3} is concerned, 
the autonomous system $(LVS_{q,\nu_+})$ plays an important role.
More specifically,  $(LVS_{q,\nu_+})$ along with
Thieme's theorem \cite{Thie94} allows to conclude 
that the $\om$-limit set for solutions to Problem \rf{P2P3}
may consist either of point $P_2(0,\frac{n-2k}k)$ or point
$P_3^+(n+l_\infty,0)$ or point 
$P_4^+ \big(\frac{q(n-2k)- k(n+l_\infty)}{q-k}, \frac{2k+l_\infty}{q-k}\big)$. 
 These three options correspond to $P_2$-, $P_3^+$-, or $P_4^+$-solutions,
respectively. {Moreover, we show that 
the behavior of solutions as $r\to+\infty$ is determined by 
the intersection of the  associated {trajectory} $\ffi(t)$ of the Lotka-\!Volterra system \rf{LVSrho}}
with the area
\aaa{
\lb{g-1111}
{G_\mp = \{(x,y)\in\Rnu_+^2: (k+1)x + k(q+1)y \lessgtr (n-2k)(q+1)\}}
}
and by the parameter $\dl$, introduced above.
More specifically, our {characterization} can be
summarized in the following table:
\begin{table}[h]
\centering
\begin{tabularx}{\textwidth}{| X | X | X | X | X|} 
\hline
 \small $\dl>0$, $\exists\, t_0$: $\ffi(t_0)\in G_-$  & \small $\dl=0$, $\exists\, t_0$: 
 $\ffi(t_0)\in G_-$ & \small
$\dl<0$, $\exists\, t_0$: $\ffi(t_0)\in G_-$ & \small $\dl\in\Rnu$, $\ffi(t)\in G_+$ $\forall t$ \\
\hline
 \small  $P_3^+$-solution \phantom{ssssssssssss}  of fast decay 
& 
\small   
$P_3^+$-solution \phantom{ssssssssssss}  of slow decay 
&
 \small  $P_4^+$-solution
 &
\small   $P_2$-solution
\\  \hline
\end{tabularx}
\end{table}
\\
Furthermore, we establish that if the solution is 
$P_3^+$, then its rate of decay is of order $r^{-\dl}$ if $\dl>0$ (fast decay) and 
of order $(\ln r)^{-\frac{k}{q-k}}$ if $\dl = 0$ (slow decay); {if the solution
is $P_4^+$, then its rate of decay is of order  $r^{\frac{\dl k}{q-k}}$ ($\dl<0$).}
 We reinforce that in the 
situation considered in \cite{BattLi}, one has
$l_\infty = - 2$ and $k=1$, which corresponds to the case $\dl = 0$, so the work 
\cite{BattLi} only deals with 
$P_2$\,-solutions and slow decay $P_3^+$-solutions. The situation of \cite{BattLi}  is reflected
in the second and the fourth columns of the above table. The cases displayed in 
the first and the third columns are new and studied in the present work for the first time.


We indicate the contents of the individual sections. 
In Section \ref{sec2}, we briefly introduce the $k$-Hessian operator and provide necessary definitions. Moreover, 
the main results of this work are announced in this section. 
The detailed description of the main results is splitted into Sections 
\ref{sec3} and \ref{secP2P3}.
Section \ref{sec3} is dedicated to the existence, non-existence, and multiplicity of solutions to Problem \rf{Eq:Ma:0}. 
We would like to emphasize that compared to \cite{MiSV19}, 
most of the proofs of Section \ref{sec3} contain significant changes
due to the presence of the general weight $\rho$, the number $l_0$,
and the asymptotic representation $\rho\sim K(0)r^{l_0}$ as $r\to 0$.
The contents of the subsections of Section \ref{sec3} is as follows.
In Subsection \ref{Sec:Exist}, we give an existence and a non-existence result of classical solutions to Problem \rf{Eq:Ma:0}  (Theorem \ref{Exist}). In Subsection 
\ref{subsec:32}, we construct a singular solution and obtain 
its associated parameter $\la$.
In Subsection \ref{InterNumber},  we study the number of intersections points 
between a regular solution and a singular solution. Further, using scaling arguments, we study the convergence of regular solutions to singular solutions and prove Theorem \ref{S4L4} on the multiplicity of solutions to Problem \rf{Eq:Ma:0}.
Finally, Section \ref{secP2P3} is dedicated to a characterization of
$P_2$\,-, $P_3^+$-, and $P_4^+$-solutions to Problem \rf{P2P3}. 
We remark that most of the results of Section 4 do not have analogs in the existing literature.

%

\section{Preliminaries and main results}
\lb{sec2}
First, we review fundamental concepts and properties for the $k$-Hessian operators. 
For $k\in\{1,...,n\}$, let $\sigma_k:\Rnu^n\to \Rnu$ denote the $k$-th elementary symmetric function
\[
\sigma_k\left(\la\right)=\sum_{1\lt i_1<...<i_k\lt n}{\la_{i_1}\cdots\la_{i_k}},
\]
and let $\gm_k$ denote the set $\gm_k=\{\la=(\la_1,...,\la_n): \sigma_1(\la)\gt 0,...,\sigma_k(\la)\gt 0\}$. For a twice differentiable function $u$ defined on a smooth domain $\Omega\subset\Rnu^n$, the {\it $k$-Hessian operator} is defined by  
$
S_k\left(D^2u\right)=\sigma_k\left(\la\left(D^2u\right)\right),
$
where $\la\left(D^2u\right)$ are the eigenvalues of $D^2u$. Equivalently, $S_k\left(D^2u\right)$ is the sum of the $k$-th principal minors of the Hessian matrix. See e.g. \cite{Wang09, Wang94}. Two relevant examples in this family of operators are the Laplace operator $S_1\left(D^2u\right)=\lap u$ and the Monge-Amp\`{e}re operator $S_n\left(D^2u\right)=\mbox{det}\left(D^2u\right)$. They are fully nonlinear when $k\gt 2$, but not elliptic in the whole space $\C^2(\Omega)$. In order to overcome this inconvenient L. Caffarelli, L. Nirenberg and G. Spruck \cite{CaNS85} consider the class of functions 
\[
\Phi^k\left(\Omega\right)=\left\lbrace u\in \C^2\left(\Omega\right)\cap C\left(\overline{\Omega}\right): \la\left(D^2u\right)\in\gm_i,\, i=1,\ldots,k\right\rbrace .
\]
The functions in $\Phi^k\left(\Omega\right)$ are called {\it admissible} or $k$-{\it convex functions}. Further, $S_k\left(D^2u\right)$ turns to be elliptic in the class of $k$-convex functions. Denote by $\Phi_0^k\left(\Omega\right)$ the set of functions in $\Phi^k\left(\Omega\right)$ that vanish on the boundary $\partial\Omega$. An interesting property of the set $\Phi_0^k\left(\Omega\right)$ is that their elements are negative in $\Omega$. Further, the $k$-Hessian operators have a divergence structure (see, for instance, \cite{Wang94}). The study of $k$-Hessian equations has many applications in geometry, optimization theory and other related fields. See \cite{Wang09}. These operators have been studied extensively, starting with the seminal work \cite{CaNS85}. See, e.g., \cite{Jacobsen99, Jacobsen04, JaSc02, Tso89, Tso90}. Recently, this class of operators has attracted renewed interest. See e.g. \cite{Dai17, OOUb19, MiSV19, NaSa20, WaLe19, Wei16, Wei17}.

%

\medbreak

Let $\Omega=B$ be the unit ball in $\Rnu^n$. It is well known that the $k$-Hessian operator, when acting on radially symmetric $\C^2$-functions, can be written as 
\[
S_k(D^2u)=c_{n,k}\,r^{1-n}\left(r^{n-k}(u')^k \right)'=kc_{n,k}r^{1-k}(u')^{k-1}\left(u''+\frac{n-k}{k}\frac{u'}{r}\right),
\]
where $r=|x|>0$ and $c_{n,k}$ is defined by $c_{n,k}=\binom{n}{k}/n$. Here $u'$ denote the radial derivative of the radial function $u$.

Consider the problem
\begin{equation}
\lb{Plambda}
\tag{$P_{\la}$}
\begin{cases}
c_{n,k}r^{1-n}\left(r^{n-k}(u')^k \right)' = \la \rho(r) (1-u)^q, &  0<r<1,\\
u(r)  < 0, &  0\lt r<1,\\
u(1)=0. 
\end{cases}
\end{equation}
As in \cite{MiSV19}, we consider the space of functions $\Phi_0^k$ defined on {$I=(0,1)$} for Problem \rf{Plambda}:
\[
\Phi_0^k=\{u\in \C^2({I})\cap \C^1({\bar{I}}): \left(r^{n-i}(u')^i \right)'\gt 0\;\;\mbox{in}\;\; {I} ,\, i=1,...,k,\,u'(0)=u(1)=0\}.
\]
\begin{remark}\lb{negative}
Note that, if $u\in\Phi_0^k$ is a solution of \rf{Plambda} then, in particular, $S_1(D^2 u)=r^{1-n}(r^{n-1}u')'\gt 0$. Thus $G(r)=r^{n-1}u'$
is nondecreasing, since $G(0)=0$, we deduce that $G\gt 0$ and hence $u$ is nondecreasing. As a consequence, $u'\gt 0$ and $u<0$ on $[0,1)$. {Further, using the integral form of equation in \rf{Plambda} we see that $u$ is strictly increasing, and therefore, $u'(r)>0$ for $0<r\lt 1$}.
\end{remark}
Thus, negative radial solutions to Problem \rf{Eq:Ma:0} are those that solve \rf{Plambda}
and vice versa.

%

\begin{definition} 
We say that a function $u$ is:
\bi
\item[(i)] a {\it classical solution} to \rf{Plambda}  if $u\in \Phi_0^k$ and the equation
 in \rf{Plambda} holds;
\item[(ii)] an {\it integral solution} to \rf{Plambda} if $u$ is absolutely continuous on $(0,1]$, 
$\int_0^1 s^{n-1} \rho(s)(1-u(s))^qds < \infty$,
$u(1)=0$, and the equality 
\aaa{
\lb{int1111}
c_{n,k}r^{n-k}(u'(r))^k = \la\int_0^r s^{n-1} \rho(s)(1-u(s))^qds
}
holds for all $r\in I$.
\ei
\end{definition}
\begin{remark}
Note that a classical solution is always an integral solution since $u'(0) = 0$.
In this case, the equation in \rf{Plambda} is equivalent to \rf{int1111}.
Furthermore, since for all $0<r_0<r <1$,
\aa{
c_{n,k}(r^{n-k}(u'(r))^k - r^{n-k}_0(u'(r_0))^k)  
= \la\int_{r_0}^r s^{n-1} \rho(s)(1-u(s))^qds,
}
a solution to \rf{Plambda} is an integral solution if and only if
$\lim_{r\to 0} (r^{n-k}(u'(r))^k = 0$.
\end{remark}

\medbreak

Now we give the definition of super and subsolutions for \rf{Eq:Ma:0}. For the method of super and subsolutions, see \cite[Theorem 3.3]{Wang94}.
\begin{definition} 
A function $u\in\Phi^k(B):=\{u\in \C^2(B)\cap C(\overline{B}): S_{i}(D^2 u)\gt 0\;\;\mbox{in}\;B,\, i=1,...,k\}$ is called a {\it subsolution} (resp. {\it supersolution}) to
Problem \rf{Eq:Ma:0} if
\begin{equation*}
\begin{cases}
S_k(D^2u)\gt (\mbox{resp.}\lt)& \la \rho(s) (1-u)^q \;\;\mbox{in }\,\, B,\\
u\lt (\mbox{resp.}\gt)\;\; 0&\qquad\qquad\;\,\mbox{on }\; \partial B.
\end{cases}
\end{equation*}
\end{definition}

{In what follows, we will need the notion of maximal solutions.}

\begin{definition} 
\lb{def1}
We say that a function $v$ is a {\it maximal} solution to \rf{Eq:Ma:0} if $v$ is a solution to \rf{Eq:Ma:0} and, for each subsolution $u$ to \rf{Eq:Ma:0}, we have $u\lt v$.
\end{definition}
\begin{definition}
We say that $u$ is a {\it maximal} solution to \rf{Plambda}
if it is a solution to \rf{Plambda} and $v(x)=u(|x|)$ is a maximal solution to \rf{Eq:Ma:0}
in the sense of Definition \ref{def1}.
\end{definition}

%

{We shall prove the following.}
\begin{theorem}\lb{Exist}
Assume $n > 2k$, $q > k$, $l_0>2k$ {and the} conditions $(\rho. 1)$--$(\rho.3)$ hold. 
Then, there exists $\la^*>0$ such that for each $\la\in (0,\la^*)$, Problem \rf{Plambda} admits a maximal bounded solution. Moreover, there is at least one  
integral solution for $\la=\la^*$, possibly unbounded, 
and no classical solutions for all $\la>\la^*$. 
Furthermore, we have the following lower bound for $\la^*{:}$
\aaa{
\lb{boundla1111}
\la^* \gt \C^{-1}\vec{n}{k}\left(\frac{q-k}{q}\right)^q\left(\frac{2k}{q-k}\right)^k,
}
where $C=C(\rho):=\max_{r\in [0,1]} \rho(r)>0$.
\end{theorem}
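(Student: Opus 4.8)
\medskip

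The plan is to follow the standard super/subsolution machinery combined with an a priori bound, adapting the argument from \cite{MiSV19} to the presence of the general weight $\rho$ and the exponent $l_0$. First I would establish the existence of \emph{some} $\la_0 > 0$ for which \rf{Plambda} has a classical solution; the explicit function $u_0(r) = A(r^{2} - 1)$ with a suitably chosen constant $A < 0$ (so that $D^2 u_0$ has eigenvalues $2A < 0$, giving $S_k(D^2 u_0) = \binom{n}{k}(2A)^k$, constant) is a natural candidate. Computing $S_k(D^2 u_0) = \binom{n}{k}(-2A)^k$ and demanding $\binom{n}{k}(-2A)^k \gt \la \rho(r)(1-u_0)^q$ on $B$, i.e. on $[0,1]$, leads — after optimizing the choice of $|A|$, balancing the constant $k$-Hessian term against the growth of $(1 + |A|(1-r^2))^q \lt (1+|A|)^q$ — to the condition that $\la$ be smaller than the right-hand side of \rf{boundla1111}; the optimization $|A| = \tfrac{2k}{q-k}$ is exactly what produces the factors $\big(\tfrac{q-k}{q}\big)^q$ and $\big(\tfrac{2k}{q-k}\big)^k$. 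This $u_0$ is then a (strict) subsolution to \rf{Eq:Ma:0} for $\la$ below that explicit bound, and $0$ is trivially a supersolution; by the method of super and subsolutions (\cite[Theorem 3.3]{Wang94}) one obtains a classical solution for every such $\la$, and in fact a maximal one (take the supremum over subsolutions, or iterate monotonically from $0$ downward). This gives the lower bound \rf{boundla1111} for the critical value $\la^*$ defined as $\la^* = \sup\{\la > 0 : \rf{Plambda}\text{ has a classical solution}\}$.

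\medskip

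Next I would show that the solution set is an interval: if \rf{Plambda} has a (sub)solution for $\la_1$, then for every $\la \in (0,\la_1)$ the same function is still a subsolution (the right-hand side only decreased), while $0$ remains a supersolution, so a maximal solution exists for all $\la \in (0,\la^*)$. Monotonicity of the family $\la \mapsto u_\la$ in $\la$ (larger $\la$ forces, via \rf{int1111}, a larger $u'$ hence a more negative, or at least not larger, $u$ — the precise comparison uses the integral identity \rf{int1111} and a standard ODE comparison, together with Remark \ref{negative}) then lets me define $u_{\la^*}$ as the monotone limit $\lim_{\la \uparrow \la^*} u_\la$. The key point is that this limit is an \emph{integral} solution: passing to the limit in \rf{int1111} is justified because the left side is monotone and the integrand $s^{n-1}\rho(s)(1-u_\la(s))^q$ is monotone in $\la$, so by monotone convergence the identity \rf{int1111} survives in the limit; the hypothesis $(\rho.3)$ (equivalently $\rho \sim K(0) r^{l_0}$ near $0$ with $l_0 > 2k > 0$) guarantees $\int_0^1 s^{n-1}\rho(s)\,ds < \infty$, so the integral on the right of \rf{int1111} is finite even if $u_{\la^*}$ blows up at the origin, giving an integral solution at $\la = \la^*$ that may fail to be bounded. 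Finally, non-existence of classical solutions for $\la > \la^*$ is immediate from the definition of $\la^*$ once I verify that classical solutions at any $\la$ produce, via the subsolution $0$ is the wrong direction — more precisely, a classical solution at $\la_1$ is itself a subsolution there, hence (by the interval property just proved applied with that subsolution) yields solutions for all smaller $\la$; so the set of admissible $\la$ is $(0,\la^*)$ or $(0,\la^*]$, and classical solvability fails beyond $\la^*$ by definition.

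\medskip

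The main obstacle I anticipate is the limiting step at $\la = \la^*$: one must rule out that $u_\la$ degenerates in an uncontrolled way as $\la \uparrow \la^*$, and show the limit is genuinely an integral solution rather than, say, the constant $-\infty$ or a function failing the integrability in the definition. This is where $(\rho.1)$–$(\rho.3)$, and in particular the role of $l_0 > 2k$, are essential: they are precisely what bounds $\int_0^1 s^{n-1}\rho(s)(1-u(s))^q\,ds$ uniformly enough (or at least keeps it finite for the limit function) using the a priori estimate $\|u_\la\|$ controlled below $\la^*$ by the maximality and comparison with $u_0$, together with the growth restriction $q > k$ which ensures the $(1-u)^q$ nonlinearity does not overwhelm the $k$-Hessian scaling. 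A secondary subtlety is checking that the monotone limit still satisfies $u'(0^+)$-type behavior compatible with being an integral (not necessarily classical) solution — but this is exactly accommodated by the definition of integral solution via \rf{int1111}, which does not require $u'(0) = 0$, only $\lim_{r\to 0} r^{n-k}(u'(r))^k$ to be handled by the finiteness of the right-hand integral. I would also remark that the precise constant in \rf{boundla1111} is not optimal and follows purely from the explicit subsolution $u_0$, so no delicate argument is needed there beyond the one-variable optimization.
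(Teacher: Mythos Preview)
Your overall strategy (explicit quadratic subsolution, sub/super method, define $\la^*$ as a supremum, pass to the limit) matches the paper's, but there are two genuine gaps and one misattribution.

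\textbf{Finiteness of $\la^*$.} You define $\la^*$ as a supremum and then say non-existence beyond it is ``immediate from the definition'', but you never show $\la^* < \infty$, so the statement has no content. The paper supplies this via Newton's inequality $\Delta u \gt C(n,k)[S_k(D^2 u)]^{1/k}$ together with the first eigenvalue $\la_{1,w}$ of the weighted problem $-\Delta \phi = \la\, \rho^{1/k}\phi$ on $B$: testing with the first eigenfunction and using $(1-u)^{q/k} \gt L|u|/C(n,k)$ for $u<0$ gives $\la_{1,w} \gt L\la^{1/k}$, hence $\la^* < \infty$.

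\textbf{Integral solution at $\la^*$.} Monotone convergence alone is not enough: you correctly flag the danger that $u_{\la^*}\equiv -\infty$, but your proposed fix (``$(\rho.3)$ guarantees $\int s^{n-1}\rho(s)\,ds < \infty$, so the integral is finite even if $u_{\la^*}$ blows up at the origin'') is wrong --- blow-up of $u_{\la^*}$ makes $(1-u_{\la^*})^q$ blow up as well, and integrability of $s^{n-1}\rho(s)$ says nothing about $s^{n-1}\rho(s)(1-u_{\la^*})^q$. The paper's argument here is the substantial part of the proof: a rescaling $u(r)=-u_\la(r\la^{-1/(2k)})$ reduces to an initial-value problem with $u(0)=a$; one introduces the radius $R(D,a)$ at which the rescaled solution first drops to level $D$ and proves the uniform bound $\sup_{a} R(D,a)\lt \bar R(D)\to 0$ as $D\to\infty$. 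This yields \emph{uniform} boundedness of $u_\la$ on each $[\eps,1]$, after which one passes to the limit first in $\la\uparrow\la^*$ on $[1/N,1]$, then in $N\to\infty$, by monotone and then dominated convergence.

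\textbf{Role of $l_0 > 2k$.} You attribute this hypothesis to the integral-solution limit. In the paper it is used elsewhere: in Lemma~\ref{max:sol}, applying Wang's sub/super theorem \cite[Theorem 3.3]{Wang94} requires $D_x(\td f^{1/k})\in \C^1(\bar B\times\Rnu)$, and since $\rho(r)^{1/k}\sim K(0)^{1/k}r^{l_0/k}$ near $0$, one needs $l_0>2k$ for $D^2_x\rho(|x|)^{1/k}$ to extend continuously through the origin.

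Minor slip: for a negative subsolution on $B$ you need $A>0$, not $A<0$, in $u_0=A(r^2-1)$; the optimal choice is $A=\tfrac{k}{q-k}$ (so $2A=\tfrac{2k}{q-k}$), exactly the paper's $v(r)=\tfrac{k}{q-k}(r^2-1)$.
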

{We mention that the above result gives a natural extension, valid for our general class of weights, of the corresponding results in \cite{MiSV19}.}
\medbreak

{To establish our next result on multiplicity of solutions, we need to introduce a second} relevant exponent recall that the first exponent $q^*(k,\sigma)$ is defined by \rf{Tsoexpo}, namely
\begin{equation*}\lb{Exp:critical:Intro}
q_{JL}(k,\sigma):=
\begin{cases}
k\frac{k(k+1)n-k^2(2-\sigma)+2k+\sigma-2\sqrt{k(2k+\sigma)[(k+1)n-k(2-\sigma)]}}{k(k+1)n-2k^2(k+3)-2k\sigma-2\sqrt{k(2k+\sigma)[(k+1)n-k(2-\sigma)]}}, & n>2k+8+\frac{4\sigma}{k},\\
\infty, & 2k < n \lt 2k+8+\frac{4\sigma}{k}.
\end{cases}
\end{equation*}
\noindent {This is the Joseph-Lundgren-type exponent}.
{The exponent $q_{JL}(k,\sigma)$ was found in \cite{SaVe17} in the study of the multiplicity of radial bounded solutions to \rf{Eq:Ma:0} with $\rho(\abs{x})=|x|^{\sigma}$.}
We remark that for $k>1$, $q_{JL}(k,0)$ is a critical exponent for the existence of intersection points of any two positive radial solutions to \rf{Eq:Ma:0} with $\rho\equiv 1$ (see \cite{Miya16, MiTa17}). It is also critical for the existence of non-trivial stable solutions to the $k$-Hessian equation $S_k(D^2 V)=(-V)^p$ on $\Rnu^n$ (see \cite{WaLe19}).

%

\medbreak

We are now in a position to state our result on multiplicity of solutions.
\begin{theorem}\lb{S4L4} 
Let $n > 2k$ and $q > k$.  Assume that conditions $(\rho. 1)$-$(\rho. 3)$ hold
and, moreover,  $\rho(r)$ has at most polynomial growth as $r\to\infty$.
Further assume that $q^*(k,l_0)<q < q_{JL}(k,l_0)$. Then, there exists 
$\td\la  \in (0, \la^*)$ such that for each $N\in\Nnu$, one can find an $\eps>0$ such that 
for $|\la-\td\la |<\eps$, \rf{Plambda} has at least $N$ solutions.
In particular, if $\la=\td\la $, problem \rf{Plambda} has infinitely many solutions.
\end{theorem}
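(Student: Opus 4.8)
The strategy follows the classical approach for multiplicity results of this type (as in \cite{MiSV19, SaVe17}): produce a singular solution $u^*$ of \rf{Plambda} for a specific parameter $\td\la$, show that the regular (bounded) solutions converge to it as $\la\to\td\la$ while oscillating around it, and count intersections. Concretely, I would first invoke Proposition \ref{S3L2} to obtain a singular solution $u^*$ associated to the stationary point ${P_4}(\hat x,\hat y)$ of the limiting autonomous system $(LVS_{q,\nu_-})$, with an associated parameter value $\td\la\in(0,\la^*)$; the fact that $\td\la<\la^*$ comes from Theorem \ref{Exist} together with the fact that $u^*$ is unbounded. In the phase plane, $u^*$ corresponds to the orbit emanating from $P_4$, and under the hypothesis $q^*(k,l_0)<q<q_{JL}(k,l_0)$ the point $P_4$ of $(LVS_{q,\nu_-})$ is a \emph{spiral} (this is precisely what $q<q_{JL}$ encodes: the linearization at $P_4$ has complex eigenvalues), so the singular orbit winds infinitely often around $P_4$.

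Next I would analyze the regular solutions. For $\la<\la^*$ let $u_\la$ be the maximal bounded solution from Theorem \ref{Exist}; in phase-plane variables its orbit $\ffi_\la(t)$ starts at the stationary point $P_3(n+l_0,0)$ (by Proposition \ref{prop35}). The key scaling/continuity step is to show that as $\la\uparrow\td\la$ the orbits $\ffi_\la$ converge, uniformly on compact $t$-intervals, to the singular orbit through $P_4$; this uses the asymptotic scaling invariance of the equation (the role played by $\rho\sim K(0)r^{l_0}$ as $r\to0$, which makes the non-autonomous system \rf{LVSrho} behave like $(LVS_{q,\nu_-})$ near $t=-\infty$) together with continuous dependence of the flow of \rf{LVSrho} on initial data and parameters. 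Because the limiting orbit spirals infinitely around $P_4$, for any prescribed $N$ the orbit $\ffi_{\td\la}$ makes at least $N+1$ turns around $P_4$ within a compact time window, and by uniform convergence so does $\ffi_\la$ for $\la$ close to $\td\la$.

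The final step converts winding into intersections and intersections into multiplicity. Using the intersection-number theory developed in Subsection \ref{InterNumber} (the comparison principle forcing the intersection number between a regular orbit and the singular orbit to be nonincreasing in $t$, Sturm-type oscillation arguments), each extra half-turn of $\ffi_\la$ around $P_4$ relative to the singular orbit produces an extra value of $\la$ for which the regular orbit hits the ``boundary data'' locus corresponding to $u(1)=0$ — equivalently, each intersection with the singular solution in the $u$-picture yields a distinct bounded solution of \rf{Plambda} at that $\la$. Hence for $|\la-\td\la|<\eps$ with $\eps$ small one gets at least $N$ distinct solutions; and at $\la=\td\la$ itself the infinitely-many windings give infinitely many solutions. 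The requirement that $\rho$ have at most polynomial growth at infinity is used to control the behavior of the flow of \rf{LVSrho} as $t\to+\infty$ (so that $\nu(t)$ stays bounded and the trajectories remain in the relevant region), ensuring the scaling limits and intersection counts are legitimate.

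\textbf{Main obstacle.} The delicate point is the uniform convergence of the rescaled regular orbits $\ffi_\la$ to the singular orbit as $\la\to\td\la$, together with quantitative control of how many times each winds around $P_4$: the system \rf{LVSrho} is non-autonomous, so one cannot argue purely by phase-portrait geometry, and one must combine Thieme's asymptotically-autonomous machinery \cite{Thie94} near $t=\pm\infty$ with careful ODE estimates on compact intervals, exploiting the asymptotic scaling invariance to transfer information from $(LVS_{q,\nu_-})$ back to \rf{LVSrho}. Ensuring the count is \emph{at least} $N$ (not merely ``many'') requires that the spiral behavior at $P_4$ be genuine, i.e.\ that $q<q_{JL}(k,l_0)$ strictly, and that the convergence is strong enough to preserve a fixed finite number of windings — this is the technical heart of the argument.
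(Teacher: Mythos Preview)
Your overall picture (singular solution from Proposition~\ref{S3L2}, spiral at $P_4$ under $q^*(k,l_0)<q<q_{JL}(k,l_0)$, intersection counting) is correct, but the way you set up the argument has a genuine gap: you parametrize by $\la$ and work with the \emph{maximal} solution $u_\la$. For each $\la$ this produces exactly one orbit $\ffi_\la$, and however many times that single orbit winds around $P_4$, it is still just one solution of \rf{Plambda}. Your sentence ``each intersection with the singular solution in the $u$-picture yields a distinct bounded solution of \rf{Plambda} at that $\la$'' is not right: the intersections (in $r$) of one bounded solution with the singular solution do not manufacture further solutions. Likewise, ``as $\la\uparrow\td\la$ the orbits $\ffi_\la$ converge to the singular orbit'' is neither proved nor the correct limiting statement; in fact $\td\la<\la^*$ is a \emph{consequence} of the multiplicity argument, not an input to it.

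The paper's route (following \cite{MiSV19,SaVe17}) fixes the equation at $\la=\td\la$ and parametrizes by the shooting value $a=-w(0)$: by Proposition~\ref{prop35} there is a unique regular solution $w(r,a)$ for each $a>0$. The key limiting step is a \emph{rescaling in $a$}, not a limit in $\la$: Lemmas~\ref{S4L1} and~\ref{S4L2} show that $\mc F_a\td w\to\td U$ and $\mc F_a w(\cdot,a)\to U$ in $\C_{loc}$ as $a\to\infty$, where $U,\td U$ solve the limiting Emden--Fowler problem \rf{S4E1} with weight $K(0)r^{l_0}$. Proposition~\ref{S4P1} (the spiral at $P_4$) gives $\mc Z_{(0,\infty)}[\td U-U]=\infty$, and then Lemma~\ref{S4L3} pulls this back to $\mc Z_{[0,1]}[\td w-w(\cdot,a)]\to\infty$ as $a\to\infty$. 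Since zeros of $\td w-w(\cdot,a)$ are simple and can only enter or leave $[0,1]$ through $r=1$, the map $a\mapsto w(1,a)$ crosses the level $\td w(1)=-1$ infinitely often; each such $a$ gives a distinct solution of $(P_{\td\la})$. For nearby $\la$ one uses the exact scaling $\bar w(r,a)=(\td\la/\la)^{1/(q-k)}w(r,a)$, which converts the boundary condition into $w(1,a)=-(\la/\td\la)^{1/(q-k)}$, a level close to $-1$, so at least $N$ crossings persist for $|\la-\td\la|<\eps$. A minor correction: the polynomial-growth hypothesis on $\rho$ is used in Proposition~\ref{prop35} (to invoke \cite[Theorem~4.1]{ClMM98} for global existence of $w(r,a)$), not to control $\nu(t)$ as $t\to+\infty$.
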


{Note that the exponents $q^*(k,l_0)$ and $q_{JL}(k,l_0)$ retain their role in Problem \rf{Plambda} independently of the weight functions considered.}
On the other hand, the parameter $\td\la $ and its associate {unbounded (singular)} solution are explicit in the case of a weight of the form {$\abs{x}^\sigma$} (See \cite[Theorem 3.1 (I)]{SaVe17}). In the more general setting under consideration, the parameter 
$\td\la $ in Theorem \ref{S4L4} can be written in terms of an orbit (from which we construct a singular solution of \rf{Plambda}) and the values of {$\rho$} on the boundary of the unit ball (see Proposition \ref{S3L2} below). See Sections
\ref{subsec:32} and \ref{InterNumber} for more comments on this topic.

We finally state our results on the second problem of our interest,
Problem \rf{P2P3}. These results characterize {$P_2$\,-, $P_3^+$-, and $P_4^+$-solutions}.
First of all, we remark that as a corollary of Thieme's theorem \cite{Thie94}, in Subsection
\ref{useful}, we prove that the $\om$-limit set of the Lotka-\!Volterra system 
\rf{LVSrho} can be one of the points: $P_2(0,\frac{n-2k}k)$,
$P_3^+(n+l_\infty,0)$, or 
{$P_4^+ \big(\frac{q(n-2k)- k(n+l_\infty)}{q-k}, \frac{2k+l_\infty}{q-k}\big)$}.  Roughly speaking, $P_2$\,-, $P_3^+$-, and $P_4^+$-solutions are defined
by the $\om$-limit sets of the associated orbits of the non-autonomous Lotka-\!Volterra
system \rf{LVSrho}.
\begin{theorem}[On $P_2$\,-solutions]
\lb{p2222}
Let $(\rho. 1)$, $(\rho. 2)$,  $(\rho. 4)$, and $(\rho. 5)$ hold. Then,
the following conditions are equivalent:
\bi
\item[(i)] $w$ is a $P_2$\,-solution.
\item[(ii)] 
If $\ffi(t) = (x(t),y(t))$ is the associated orbit of the non-autonomous Lotka-\!Volterra system \rf{LVSrho}, then
there exist constants $c_1>0$ and 
$c_2= c_1 \big(\frac{k^2\gm}{n-2k}- k\big)$ such that
\aaa{
\lb{p2xy}
x(t) = c_1  e^{-\gm t} (1+o(1)), \quad 
y(t) = \frac{n-2k}{k} + c_2  e^{-\gm t}  (1+o(1)) \quad (t\to +\infty),
}
where $\gm =  \frac{q}{k}(n-2k) - (n+l_\infty)>0$.
\item[(iii)] $w(r) = - c_3 r^{-\frac{n-2k}{k}}(1+o(1))$, \quad
$w'(r) = c_4 r^{-\frac{n-k}{k}}(1+o(1))\; \; {(r\to +\infty)}$,\\
where $c_3= \big(c_{n,k} c_1c_\rho^{-1}\big)^{\frac1{q-k}} 
\big(\frac{n-2k}k\big)^{\frac{k}{q-k}}$,
\; $c_4= \big(c_{n,k} c_1c_\rho^{-1}\big)^{\frac1{q-k}} 
\big(\frac{n-2k}k\big)^{\frac{q}{q-k}}$.
\item[(iv)] $\ffi(t) \in G_+$ for all $t\in \Rnu$.
\ei
Moreover, in a neighborhood of $P_2$, $y$ can be represented as
a function of $x$, which we denote by $\hat y(x)$, and
 \aaa{
\lb{haty2222}
\lim_{x\to 0} \hat y'(x) = \hat y'(0) = -\frac{n-2k}{k^2\gm + k(n-2k)}.
}
\end{theorem}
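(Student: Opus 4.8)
The plan is to establish the cycle of implications (iv)~$\Rightarrow$~(i)~$\Rightarrow$~(ii)~$\Rightarrow$~(iii)~$\Rightarrow$~(iv), together with the final asymptotic formula \rf{haty2222}. The starting point is the corollary of Thieme's theorem (proved in Subsection~\ref{useful}), which tells us that the $\om$-limit set of any orbit $\ffi(t)$ of \rf{LVSrho} is exactly one of the three stationary points $P_2$, $P_3^+$, $P_4^+$ of the limiting autonomous system $(LVS_{q,\nu_+})$; by definition a $P_2$\,-solution is one whose orbit converges to $P_2(0,\tfrac{n-2k}{k})$. First I would show (iv)~$\Rightarrow$~(i): the region $G_+$ defined in \rf{g-1111} is (under $(\rho.4)$, $(\rho.5)$) positively invariant for the flow, and a direct inspection of the vector field on $G_+$ rules out $P_3^+$ and $P_4^+$ as $\om$-limits — those points lie on the boundary or outside $G_+$ — leaving only $P_2$. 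For (i)~$\Rightarrow$~(iv) I would run the converse: if the orbit ever enters $G_-$, a Lyapunov/monotonicity argument along the line $(k+1)x+k(q+1)y=(n-2k)(q+1)$ forces the orbit away from $P_2$, so convergence to $P_2$ forces $\ffi(t)\in G_+$ for all $t$.

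The analytic heart is (i)~$\Rightarrow$~(ii): a linearization of \rf{LVSrho} at $P_2$. Writing $x = \xi$, $y = \tfrac{n-2k}{k}+\eta$ and using $\nu(t) = n+R(e^t) \to n+l_\infty$ with the rate control from $(\rho.5)$, the $x$-equation becomes $\dot\xi = \xi\big(\nu(t) - \tfrac{q(n-2k)}{k} + o(1)\big)$, whose dominant balance gives $\xi(t) = c_1 e^{-\gm t}(1+o(1))$ with $\gm = \tfrac{q}{k}(n-2k)-(n+l_\infty)$; here the sign $\gm>0$ is exactly condition $(\rho.4)$(1)--(2) combined with $q>q^*(k,l_0)>q^*(k,l_\infty)$. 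Feeding this into the $\eta$-equation, $\dot\eta = \big(\tfrac{n-2k}{k}+\eta\big)\big(\tfrac{\xi}{k}+\eta\big)$, linearizing gives $\dot\eta + \tfrac{n-2k}{k}\eta = \tfrac{n-2k}{k^2}\xi + \text{h.o.t.}$, an inhomogeneous linear ODE forced by $c_1 e^{-\gm t}$; solving by variation of constants and matching the particular solution pins down $c_2 = c_1\big(\tfrac{k^2\gm}{n-2k}-k\big)$ (one must check $\gm \neq \tfrac{n-2k}{k}$, i.e.\ no resonance, which follows since $\gm = \tfrac{n-2k}{k}$ would contradict $q>k$). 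The subtle point is controlling the $o(1)$ error terms rigorously — one needs a fixed-point / Gronwall argument on the space of exponentially decaying perturbations rather than a naive asymptotic expansion, and the rate $\teta>0$ from $(\rho.5)$ is what makes the remainder genuinely $o(1)$ and not merely bounded. The reverse implication (ii)~$\Rightarrow$~(i) is immediate since \rf{p2xy} forces $\ffi(t)\to P_2$.

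The passage (ii)~$\Leftrightarrow$~(iii) is the unwinding of the change of variables of Subsection~\ref{chanofv}: recalling $t=\ln r$ and the relations expressing $u$ (equivalently $w$) and $u'$ in terms of $x(t)$, $y(t)$ and $\rho$, one has schematically $-w(r) \asymp \big(\text{const}\cdot \rho(r)^{-1} r^{\,?}\, y(t)\big)^{1/(q-k)}$ times lower-order factors; substituting $y(t)\to\tfrac{n-2k}{k}$, $x(t)\to 0$ and $\rho(r)\sim c_\rho r^{l_\infty}$ as $r\to+\infty$ (which follows from $(\rho.4)$, $(\rho.5)$, analogously to the $K(0)$ computation in Remark~\ref{rk1111}) yields the powers $r^{-(n-2k)/k}$ for $w$ and $r^{-(n-k)/k}$ for $w'$ with precisely the constants $c_3,c_4$ as stated; differentiating the asymptotic relation (legitimate because $w$ is a classical solution and the error terms are themselves differentiable with controlled derivatives) gives the $w'$ formula. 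Finally, for \rf{haty2222}: near $P_2$ the $x$-nullcline has nonvanishing transverse component, so the center-stable/strong-stable structure lets us write $y=\hat y(x)$ locally; then $\hat y'(x) = \dot y/\dot x$ evaluated along the orbit, and taking $x\to 0$ with $\dot x = x(\nu - \tfrac{q(n-2k)}{k}+\cdots) \sim -\gm x$ and $\dot y \sim \tfrac{n-2k}{k}\big(\tfrac{x}{k}+\eta\big)$ together with $\eta \sim \tfrac{c_2}{c_1}x$ from step (ii), we get $\hat y'(0) = \tfrac{1}{-\gm}\cdot\tfrac{n-2k}{k}\big(\tfrac1k + \tfrac{c_2}{c_1}\big) = -\tfrac{n-2k}{k^2\gm+k(n-2k)}$ after substituting $\tfrac{c_2}{c_1}=\tfrac{k^2\gm}{n-2k}-k$ and simplifying.

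I expect the main obstacle to be the rigorous error control in (i)~$\Rightarrow$~(ii): upgrading the formal linearization at the non-hyperbolic-in-$t$ (non-autonomous) stationary point $P_2$ to genuine $(1+o(1))$ asymptotics requires either an invariant-manifold argument for asymptotically autonomous systems or a carefully set-up contraction on weighted function spaces, and the near-resonance bookkeeping between the eigenvalue $-\tfrac{n-2k}{k}$ of the $\eta$-equation and the forcing rate $-\gm$ must be handled case-by-case depending on the sign of $\gm - \tfrac{n-2k}{k}$; this is also where assumption $(\rho.5)$ is essential and cannot be dropped.
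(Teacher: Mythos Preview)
Your overall strategy mirrors the paper's: linearize \rf{LVSrho} at $P_2$, use the regions $G_\pm$ together with Lemma~\ref{inward} and Corollary~\ref{pro1111} for (i)$\Leftrightarrow$(iv), and unwind \rf{newtrans0}--\rf{inverse0} for (ii)$\Leftrightarrow$(iii). But there is a genuine gap in your (i)$\Rightarrow$(ii). You write the linearized $\eta$-equation as $\dot\eta + \tfrac{n-2k}{k}\eta = \tfrac{n-2k}{k^2}\xi + \text{h.o.t.}$ and refer to ``the eigenvalue $-\tfrac{n-2k}{k}$''; the sign is wrong. With $y = \tfrac{n-2k}{k}+\eta$ and $x=\xi$ one gets $\dot\eta = \big(\tfrac{n-2k}{k}+\eta\big)\big(\tfrac{\xi}{k}+\eta\big)$, whose linear part is $\dot\eta = +\tfrac{n-2k}{k}\,\eta + \tfrac{n-2k}{k^2}\,\xi$. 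Thus $P_2$ is a \emph{saddle} (eigenvalues $\la_1=-\gm<0$ and $\la_2=+\tfrac{n-2k}{k}>0$), and variation of constants on the $\eta$-equation produces the growing homogeneous mode $e^{(n-2k)t/k}$; no Gronwall or contraction on an exponentially weighted space will by itself select the decaying particular solution. Your resonance worry $\gm=\tfrac{n-2k}{k}$ is likewise moot: the forcing rate is $-\gm$ and the homogeneous rate is $+\tfrac{n-2k}{k}$, so they never coincide.

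What the paper does --- and what your sketch is missing --- is to use the \emph{a priori} information $\psi(t)=(\xi,\eta)\to 0$ to kill the unstable component. One writes a Duhamel representation with separate base times $\tau$ for the stable projection and $\bar\tau$ for the unstable one, then sends $\bar\tau\to+\infty$: boundedness of $\psi$ forces $e^{\la_2(t-\bar\tau)}P_2\psi(\bar\tau)\to 0$, and the unstable Duhamel integral becomes the convergent $-\int_t^{+\infty} e^{\la_2(t-s)}P_2(\cdots)\,ds$. Only then does a bootstrap (Lemma~7.1 of \cite{BattLi}) give $|\psi(t)|\lt K e^{(\la_1+\eps)t}$ and hence a finite limit $\lim_{t\to\infty} e^{-\la_1 t}\psi(t)=(c_1,c_2)$. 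A smaller slip: by Lemma~\ref{inward} it is $G_-$ (not $G_+$) that is forward-invariant; your (iv)$\Rightarrow$(i) survives since $P_3^+,P_4^+\in G_-$ are disjoint from $\overline{G_+}$, but (i)$\Rightarrow$(iv) needs the forward invariance of $G_-$ together with the tangent-direction computation at $P_2$ (Lemma~\ref{lemp2222}), which in turn relies on the asymptotics (ii) already established.
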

In the above item {\it (iii)}, the constant $c_\rho = \lim_{r\to\infty} \rho(r) r^{-l_\infty}$
(see Lemma \ref{krho}).

%

{Finally, in Theorems \ref{t1111} and \ref{bl1111} below, $W_-$ is defined similar to $G_-$
(see \rf{g-1111}), but via a different linear function.
 Remark that all the above-mentioned subsets 
are rigorously defined in Subsection \ref{TSqnu}.}
Also define $\zeta(t) = \nu(t)-n-l_{\infty}$.

\begin{theorem}[On $P_3^+$\!-solutions of algebraic fast decay]
\lb{t1111}
Assume $(\rho. 1)$, $(\rho. 2)$, $(\rho. 4)$, $(\rho. 5)$ and let
$l_\infty<-2k$ $(\dl>0)$. Then, the following conditions are equivalent:
\bi
\item[(i)] $w$ is a $P_3^+$-solution.
\item[(ii)]
If $\ffi(t) = (x(t),y(t))$ is the associated orbit of the non-autonomous Lotka-\!Volterra system \rf{LVSrho}, then
there exists a constant $c>0$
such that
\aaa{
\lb{yassi}
x(t) = \nu_+(1+o(1)), \qquad
y(t) = c \, e^{-\dl t} (1+o(1)) \quad (t\to +\infty).
}
\item[(iii)] It holds that
\aa{
 w(r) = -c_1\Big(1+ \frac{c}{\dl}\, r^{-\dl}\Big) + o(r^{-\dl}); \quad
w'(r) = c_2\, r^{-(\dl + 1)}(1+o(1)) \; \; (r\to +\infty),
} 
where 
$c_1 = (\nu_+c_{n,k} c^k c_\rho^{-1})^{\frac1{q-k}}$\!, \;
$c_2 = (\nu_+c_{n,k} c^q c_\rho^{-1})^{\frac1{q-k}}$\!.
\item[(iv)] There exists $t_0\in\Rnu$ such that $\ffi(t_0)\in G_-$.
\item[(v)] There exists $t_0\in\Rnu$ such that $\ffi(t_0)\in W_-$.
\ei
If, in addition, $(\rho.6)$ holds, then we have a more precise
asymptotic representation for $x(t)$ as  $t\to+\infty$:
\aaa{
\lb{x1111}
x(t) = 
\begin{cases}
\nu_+ +\frac{(\cpp-qc)\nu_+}{\nu_+ - \dl}\, e^{-\dl t} (1+o(1))  
\quad \text{under $(\rho. 6)$-(1);}\\
\nu_+ +\frac{\nu_+}{\nu_+ - \hat \nu}\, \zeta(t) (1+o(1))
\quad \text{under $(\rho. 6)$-(2),}
\end{cases}
}
where $\cpp= \lim_{t\to+\infty} e^{\dl t}\zeta(t)$
and  $\hat \nu =  \lim_{t\to+\infty}(-\frac{\zeta'(t)}{\zeta(t)})$. 
Moreover, in a neighborhood of $P_3^+$, under $(\rho.6)$-(1),  $x$ can be represented as
a function of $y$, which we denote by $\hat x(y)$; under $(\rho.6)$-(2),
$y$ can be represented as a function of $x$, denoted $\hat y(x)$, and
\aaa{
\lb{haty1111}
\begin{cases}
\hat x'(0) = \frac{(\frac{\cpp}{c}-q)\nu_+}{\nu_+-\dl}   \quad \text{under $(\rho. 6)$-(1);}\\
\hat y'(\nu_+) = 0  \quad \text{under $(\rho. 6)$-(2).}
\end{cases}
}
\end{theorem}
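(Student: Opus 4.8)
The plan is to prove the equivalences in \thmref{t1111} by treating the non-autonomous Lotka--Volterra system \rf{LVSrho} as an asymptotically autonomous perturbation of $(LVS_{q,\nu_+})$, exploiting the fact that $\dl>0$ is equivalent to $\nu_+ = n+l_\infty < n-2k$, so that the stationary point $P_3^+(\nu_+,0)$ is a hyperbolic saddle of the limiting system with a one-dimensional stable manifold. First I would establish the chain $(i)\Leftrightarrow(ii)$: by the characterization of $P_3^+$-solutions via $\om$-limit sets (the corollary of Thieme's theorem proved in Subsection \ref{useful}), $w$ is a $P_3^+$-solution iff $\ffi(t)\to P_3^+$; then a local analysis near the saddle $P_3^+$ of $(LVS_{q,\nu_+})$ gives the eigenvalues $-\dl$ (tangent to $\{x=\nu_+\}$ along the $y$-direction) and $\nu_+>0$ (the unstable direction along the $x$-axis $y=0$). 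Using $(\rho.5)$, the perturbation term $\zeta(t)=\nu(t)-\nu_+$ decays like $O(e^{-\teta t})$, so standard asymptotic-integration / Hartman--Wintner-type arguments (or a direct fixed-point argument on the integral equations of \rf{LVSrho}) yield that any orbit converging to $P_3^+$ does so along the stable direction with $y(t)=c\,e^{-\dl t}(1+o(1))$, $c>0$, and $x(t)=\nu_+(1+o(1))$, which is \rf{yassi}. Conversely \rf{yassi} forces $\ffi(t)\to P_3^+$.

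Next I would pass between the dynamical-system asymptotics and the PDE asymptotics, $(ii)\Leftrightarrow(iii)$. This is a change-of-variables computation: recalling from Subsection \ref{chanofv} (and the Appendix) how $(x,y)$ are built from $w,w'$ and $\rho$, together with $t=\ln r$ and $\rho(r)\sim c_\rho r^{l_\infty}$ (\lemref{krho}), I would substitute \rf{yassi} into the defining relations. The relation expressing $y$ in terms of $r^{n-k}(w')^k/\big(\text{something}\big)$ combined with the integral form of the equation in \rf{P2P3} gives $w'(r)=c_2 r^{-(\dl+1)}(1+o(1))$; integrating from $r$ to $+\infty$ (the integral converges precisely because $\dl>0$) produces $w(r)=-c_1(1+\frac{c}{\dl}r^{-\dl})+o(r^{-\dl})$, where the constant $c_1$ is pinned down by the requirement that $x(t)\to\nu_+$. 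The constants $c_1,c_2$ come out exactly as stated after tracking the factors $c_{n,k}$, $c_\rho$ and the powers $\frac1{q-k}$. The reverse implication is the same computation read backwards.

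For $(i)\Leftrightarrow(iv)\Leftrightarrow(v)$ I would use a trapping-region / nullcline argument. The sets $G_+$ and $G_-$ (and $W_\pm$) are half-planes cut by linear functions chosen so that $G_+$ is positively invariant for \rf{LVSrho} under $(\rho.2)$--$(\rho.4)$ and contains the orbit corresponding to a $P_2$-solution; I would show that an orbit staying in $G_+$ for all $t$ must converge to $P_2$ (so it is \emph{not} $P_3^+$), while an orbit that ever enters $G_-$ is then funneled by the vector field toward $P_3^+$ — this is where the sign condition $\dl>0$, i.e.\ $\nu_+<n-2k$, is used to control the $y$-component and rule out escape to $P_4^+$ or back to $P_2$. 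Since by Thieme's corollary the only possible limits are $P_2$, $P_3^+$, $P_4^+$, ruling out the other two pins down $P_3^+$. The equivalence of $G_-$ and $W_-$ as ``trigger'' conditions follows because both linear functionals detect the same crossing up to the dynamics; here I would check that on the relevant part of phase space one region is entered iff the other is.

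Finally, under the extra assumption $(\rho.6)$ I would refine the expansion of $x(t)$ to get \rf{x1111} and the derivative formulas \rf{haty1111}. With $y(t)=ce^{-\dl t}(1+o(1))$ in hand, the $x$-equation $x' = x(\nu(t)-x-qy)$ becomes, after writing $x=\nu_+ + \xi$ with $\xi\to0$, a linear ODE $\xi' = -\nu_+\xi + \nu_+(\zeta(t)-qy(t)) + (\text{quadratic})$; its forced solution is $\xi(t)\sim \nu_+\int_{\,}^t e^{-\nu_+(t-s)}(\zeta(s)-qy(s))\,ds$, and the two cases of $(\rho.6)$ correspond to whether $\zeta(t)$ decays exactly at rate $e^{-\dl t}$ (case (1): the integral is dominated by a resonance-type term giving the factor $\frac{\nu_+}{\nu_+-\dl}$, needing $\nu_+>\dl$) or more slowly with a well-defined logarithmic rate $\hat\nu$ (case (2): $\zeta(t)$ itself dominates $y(t)$, giving the factor $\frac{\nu_+}{\nu_+-\hat\nu}$, needing $\nu_+>\hat\nu$). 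The slope formulas in \rf{haty1111} then follow by dividing the two asymptotic relations: under $(\rho.6)$-(1), $x-\nu_+$ and $y$ are both $\asymp e^{-\dl t}$, so $\hat x'(0)=\lim (x-\nu_+)/y$; under $(\rho.6)$-(2), $x-\nu_+ = O(\zeta(t)) = o(e^{-\dl t}) = o(y)$, so $\hat y'(\nu_+)=0$.

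I expect the main obstacle to be the asymptotic analysis near the saddle $P_3^+$ in the non-autonomous setting: showing rigorously that \emph{every} converging orbit picks up the stable-direction rate $e^{-\dl t}$ with a genuinely positive constant $c>0$ (not $c=0$, which would correspond to a faster, ``superstable'' approach that one must exclude), and that the perturbation $\zeta(t)$ from $(\rho.5)$ does not destroy this — this requires a careful stable-manifold-type argument for asymptotically autonomous systems, and in case $(\rho.6)$-(2) a delicate comparison of the two small rates $e^{-\dl t}$ and $\zeta(t)$ to make sure $\nu_+>\hat\nu$ is exactly the condition that keeps the forced term subordinate. The change-of-variables bookkeeping for the constants $c_1,\dots,c_4$ is routine but must be done with care to match the stated formulas.
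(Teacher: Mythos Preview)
Your overall architecture is reasonable, but there is a genuine error in the linearization that would derail the execution of Step $(i)\Leftrightarrow(ii)$ as you describe it. At $P_3^+(\nu_+,0)$ the Jacobian of $(LVS_{q,\nu_+})$ is upper triangular with diagonal entries $-\nu_+$ and $-\dl$; the eigenvalue along the $x$-axis is $-\nu_+$, not $+\nu_+$. Hence for $\nu_+>0$ (the only case in which $P_3^+$ lies in the closed first quadrant and the statement is non-vacuous) the point $P_3^+$ is a \emph{stable node}, not a hyperbolic saddle, and there is no one-dimensional stable manifold to invoke. A Hartman--Wintner argument could still be made to work once this is corrected --- the $y$-component is always carried by the $-\dl$ eigendirection, since the $-\nu_+$ eigenvector is $(1,0)$ --- but the reasoning about why $c>0$ (your ``superstable'' worry) and about the interplay of the two negative eigenvalues would have to be redone from scratch, and without the assumption $\nu_+>\dl$ this is delicate.

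The paper avoids all of this by a much more direct computation. It introduces the auxiliary variable $z=(\nu_+-x)/y$, observes that $(1/y)'=\dl\,(1/y)+z/k-1$, and writes down the explicit solution
\[
y(t)=\frac{e^{-\dl t}}{e^{-\dl t_0}y(t_0)^{-1}+\int_{t_0}^{t}e^{-\dl s}\big(\tfrac{z(s)}{k}-1\big)\,ds}.
\]
One then shows, using a scalar ODE for $z$ (and for suitable products $z\ffi$) together with the elementary \lemref{lem1111}, that $e^{-\dl t}z(t)=e^{-\eps t}o(1)$, so the integral converges. The case where the denominator tends to $0$ is ruled out by a short contradiction argument comparing upper and lower exponential bounds on $ye^{\dl t}$. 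This gives \rf{yassi} with $c>0$ directly, without any stable-manifold machinery and without ever needing to compare $\nu_+$ and $\dl$. The refined expansion \rf{x1111} under $(\rho.6)$ is obtained by the same device: one applies \lemref{lem1111} to the ODE for $z$ (case (1)) or for $\td z=z/(e^{\dl t}\zeta)$ (case (2)).

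Two smaller points. First, the positive invariance goes the other way: by \lemref{inward} it is $G_-$ (not $G_+$) that is forward-invariant; the paper's argument for $(iv)\Rightarrow(i)$ is simply that Corollary~\ref{pro1111} leaves only $P_2$ or $P_3^+$ as possible limits, and item $(iv)$ of \thmref{p2222} says a $P_2$-orbit stays in $G_+$ forever, contradicting $\ffi(t_0)\in G_-$. Second, in your sketch of $(\rho.6)$-(2) the comparison is reversed: there $e^{\dl t}|\zeta(t)|\to\infty$, so $\zeta$ decays \emph{slower} than $e^{-\dl t}$ and dominates $y$; this is what gives $y/(x-\nu_+)\to 0$ and hence $\hat y'(\nu_+)=0$.
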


%

\begin{theorem}[On $P_3^+$\!-solutions of log-like slow decay]
\lb{bl1111}
Assume $(\rho. 1)$, $(\rho. 2)$, $(\rho. 4)$,  $(\rho. 5)$
and let $l_\infty = -2k$ $(\dl=0)$. Then, the following conditions are equivalent:
\bi
\item[(i)] $w(r)$ is {a} $P_3^+$-solution.
\item[(ii)]
If $\ffi(t) = (x(t),y(t))$ is the associated orbit of \rf{LVSrho}, then
\aa{
 x(t) = n-2k - \frac{qk}{q-k} \frac1t(1+o(1)), \qquad
 y(t) =\frac{k}{q-k} \frac1t (1+o(1)) \quad {(t\to +\infty)},
 }
\item[(iii)] It holds that {as $r\to +\infty$,}
\aa{
w(r) = - c_3 (\ln r)^{-\frac{k}{q-k}}(1+o(1)),\quad
w'(r) = c_4 \, r^{-1} (\ln r)^{-\frac{q}{q-k}}(1+o(1)).
} 
where $c_3 = \big( c_\rho^{-1} c_{n,k} \big(\frac{k}{q-k}\big)^k (n-2k)\big)^\frac1{q-k}$,
\, $c_4 = \big( c_\rho^{-1}c_{n,k} \big(\frac{k}{q-k}\big)^q (n-2k)\big)^\frac1{q-k}$.
\item[(iv)] There exists $t_0\in\Rnu$ such that $\ffi(t_0)\in G_-$.
\item[(v)] There exists $t_0\in\Rnu$ such that $\ffi(t_0)\in W_-$.
\ei
Moreover, in a neighborhood of $P_3^+$\!, $y$ can be represented as
a function of $x$, which we still denote by $\hat y(x)$, and
\aaa{
\lb{yn2k1111}
\lim_{x\to n-2k} \hat y'(x) = \hat y'(n-2k) = -\frac1{q}.
}
\end{theorem}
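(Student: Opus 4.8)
\textbf{Proof plan for Theorem \ref{bl1111}.} The plan is to follow the scheme already used for the $P_2$-case (Theorem \ref{p2222}) and the fast-decay case (Theorem \ref{t1111}), adapting it to the degenerate situation $\dl=0$, where the stationary point $P_3^+$ coincides with $P_3(n-2k,0)=(n+l_\infty,0)$ and the linearization of $(LVS_{q,\nu_+})$ at that point is \emph{not} hyperbolic in the $y$-direction (the relevant eigenvalue is $\dl/k=0$). This loss of hyperbolicity is exactly why the decay becomes logarithmic rather than exponential, and it is the main obstacle. First I would establish the cycle of implications $(i)\Rightarrow(ii)\Rightarrow(iii)\Rightarrow(i)$ together with $(i)\Leftrightarrow(iv)\Leftrightarrow(v)$, so that each statement is reached once.

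For $(i)\Rightarrow(ii)$: by definition a $P_3^+$-solution is one whose associated orbit $\ffi(t)=(x(t),y(t))$ of \rf{LVSrho} has $\om$-limit set $\{P_3^+\}=\{(n-2k,0)\}$; this is exactly the list produced by Thieme's theorem in Subsection \ref{useful}. On such an orbit $x(t)\to n-2k$ and $y(t)\to 0^+$ with $y>0$. Using $(\rho.4)$--$(\rho.5)$ we have $\nu(t)\to\nu_+=n+l_\infty=n-2k$ with $\nu(t)-\nu_+=O(e^{-\teta t})$. Substituting $x=n-2k+\xi$ into the second equation of \rf{LVSrho} gives $\dot y = y(\xi/k + y)$, and since $\xi\to 0$ this is, to leading order, $\dot y \approx y^2 + (\text{small})\cdot y$. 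I would extract the leading behaviour by first showing from the first equation that $\xi(t) = -\tfrac{q\,k}{q-k}\cdot\tfrac1t(1+o(1))$ is forced once $y\sim \tfrac{k}{q-k}\tfrac1t$; more precisely, the natural ansatz $y\sim a/t$, $\xi\sim b/t$ plugged into $\dot y = y(\xi/k+y)$ gives $-a/t^2 = (a/t)(b/(kt)+a/t)$, i.e. $-a = ab/k + a^2$; and plugged into $\dot\xi = (n-2k+\xi)(\nu-n+2k-\xi-qy)-0$, using $\nu-n+2k\to 0$ faster than $1/t$ (by $(\rho.5)$, it is $O(e^{-\teta t})$, hence negligible), gives $-b/t^2 = (n-2k)(-b/t-qa/t)(1+o(1))$, i.e. $0 = (n-2k)(b+qa)$ at leading order, so $b=-qa$. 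Combining $-a = ab/k+a^2$ with $b=-qa$ yields $-1 = b/k + a = -qa/k + a = a(1-q/k)$, hence $a = \frac{k}{q-k}$ and $b=-\frac{qk}{q-k}$, exactly the constants in $(ii)$. To make this rigorous rather than formal I would introduce $u=1/y$; then $\dot u = -\dot y/y^2 = -(\xi/k + y) = -\xi/k - 1/u$, so $\dot u = 1 - \xi/k - 1/u + (1-1) $; cleaner: from $\dot y = y^2(1+ \xi/(ky))$ one gets $\dot u = -1 - \xi u/k$, and one shows $\xi u \to 0$ (this is the substance: $\xi\to 0$ while $u\to\infty$, so one needs the relative rate), whence $u(t)/t\to 1$... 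I would instead track $u$ and $\xi$ simultaneously via a Gronwall/barrier argument on the planar system, using that the incoming orbit lies in a small sector at $(n-2k,0)$, to deduce $u(t) = \frac{q-k}{k}t(1+o(1))$ and $\xi(t)t \to -\frac{qk}{q-k}$. This non-hyperbolic asymptotic analysis is the technical heart of the proof.

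For $(ii)\Rightarrow(iii)$: this is the change-of-variables bookkeeping from Subsection \ref{chanofv} / the Appendix, identical in spirit to the corresponding step of Theorems \ref{p2222} and \ref{t1111}. Recall $t=\ln r$, so $1/t = 1/\ln r$, and the original solution $w$ is recovered through $x(t)$ and $y(t)$ via the defining relations of the change of variables, where $y$ is (up to the factor $c_{n,k}$ and $\rho$) essentially $r w'/(1-w)$-type quantity and $(1-w)^q$ is read off from $x$; concretely, from $c_{n,k}r^{n-k}(w')^k = \int_0^r s^{n-1}\rho(s)(-w)^q ds$ and the asymptotics $\rho(s)\sim c_\rho s^{l_\infty}=c_\rho s^{-2k}$ one gets, after substituting $y\sim \frac{k}{q-k}\frac1{\ln r}$ and $x\to n-2k$, that $-w(r)\sim c_3(\ln r)^{-k/(q-k)}$ and $w'(r)\sim c_4 r^{-1}(\ln r)^{-q/(q-k)}$ with the stated constants; I would verify the constants $c_3,c_4$ by matching powers of $(\ln r)$ and the coefficient $c_\rho^{-1}c_{n,k}(k/(q-k))^{\ast}(n-2k)$ exactly as the exponents $k$ resp.\ $q$ appear. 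Then $(iii)\Rightarrow(i)$: a solution $w$ with this decay has $(-w)^q\cdot \rho(r)\cdot r^{n-1}$ integrable and the associated orbit manifestly approaches $(n-2k,0)$, so its $\om$-limit set is $\{P_3^+\}$ by the uniqueness clause in the $\om$-limit classification.

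For the equivalences with $(iv)$ and $(v)$: the set $G_+$ (resp.\ its complement region $G_-$) from \rf{g-1111} is the region above (resp.\ below) the line $(k+1)x+k(q+1)y=(n-2k)(q+1)$, which passes through $P_3^+=(n-2k,0)$; similarly $W_\pm$ is cut by another line through $P_3^+$. The point $P_2=(0,\frac{n-2k}k)$ lies strictly in $G_+$ and $P_4^+$ (when $\dl=0$) degenerates onto $P_3^+$. So an orbit converging to $P_2$ stays in $G_+$ (Theorem \ref{p2222}$(iv)$), whereas an orbit converging to $P_3^+$ must cross into $G_-$: I would show this by examining $\frac{d}{dt}[(k+1)x+k(q+1)y]$ along \rf{LVSrho} near $P_3^+$ and using the sign of the tangent direction $\hat y'(n-2k)=-1/q$ established below — the slope $-1/q$ is compared against the slopes $-(k+1)/(k(q+1))$ of $\partial G$ and the slope of $\partial W$, and one checks which side the orbit enters; since a $P_3^+$-orbit is the unique non-$P_2$, non-$P_4^+$ option, $(i)\Leftrightarrow(iv)\Leftrightarrow(v)$ follows from the trichotomy plus the fact that $\ffi(t)\in G_+$ for all $t$ characterizes $P_2$. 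Finally, for the center-manifold slope: near $P_3^+$ write $y=\hat y(x)$; from $\frac{dy}{dx}=\frac{y(x/k+y-(n-2k)/k)}{x(\nu(t)-x-qy)}$ and $x\to n-2k$, $y\to0$, $\nu-x\to 0$, the numerator and denominator both vanish, and L'Hôpital / the asymptotics of $(ii)$ give $\hat y'(n-2k)=\lim \frac{y}{x-(n-2k)} = \frac{k/(q-k)}{-qk/(q-k)} = -\frac1q$, which is \rf{yn2k1111}. I expect the center-manifold (non-hyperbolic) asymptotic analysis in $(i)\Rightarrow(ii)$ — getting the sharp $1/\ln r$ rate with the correct constant rather than just $y\to 0$ — to be the step requiring the most care, and I would lean on the already-developed machinery (Thieme's theorem, the barrier/monotonicity estimates for \rf{LVSrho}) from Sections \ref{sec3}--\ref{secP2P3} to control the error terms uniformly.
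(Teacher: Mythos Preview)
Your overall architecture matches the paper's, and the steps $(ii)\Rightarrow(iii)$, $(iii)\Rightarrow(i)$, the equivalence with $(iv),(v)$, and the computation of $\hat y'(n-2k)$ are all essentially as in the paper. The substantive gap is in $(i)\Rightarrow(ii)$.

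Your claim ``one shows $\xi u\to 0$'' is false, and this is exactly the place where the non-hyperbolic analysis bites. In your notation $\xi=x-\nu_+$ and $u=1/y$, so $\xi u=(x-\nu_+)/y$. The paper introduces $z:=(\nu_+-x)/y=-\xi u$ and proves $z\to q$, \emph{not} $0$; feeding this back into your own equation $\dot u=-1-\xi u/k$ gives $\dot u\to -1+q/k=(q-k)/k$, whence $u(t)/t\to (q-k)/k$, not $1$. Your fallback ``Gronwall/barrier argument'' is not spelled out and would have to reproduce precisely this nonzero limit, which your heuristic does not predict.

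The paper's route is short and avoids any center-manifold barrier argument: from the equation for $z$ already derived in the fast-decay proof (equation \rf{1111z} with $\dl=0$),
\[
\dot z=\Big(\tfrac{\nu_+-x}{k}-y-x\Big)z+qx-\tfrac{x}{y}\,\zeta(t),
\]
the linear coefficient tends to $-(n-2k)<0$, while $qx\to q(n-2k)$ and $\tfrac{x}{y}\zeta(t)\to 0$ (using $(\rho.5)$ together with the a priori bound $1/y=e^{\eps t}o(1)$ from \rf{ay111}). Lemma~\ref{lem1111} then gives $z\to q$ directly. Substituting into the integral formula \rf{eq1111} with $\dl=0$ and applying L'H\^opital yields $y(t)=\tfrac{k}{q-k}\,t^{-1}(1+o(1))$, and then $\nu_+-x=zy$ gives the asymptotic for $x$. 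This is the missing idea you need.

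One minor correction on the geometry: the line $(k+1)x+k(q+1)y=(n-2k)(q+1)$ does \emph{not} pass through $P_3^+=(n-2k,0)$ (it would require $k=q$); rather $P_3^+\in G_-$ strictly since $(k+1)(n-2k)<(q+1)(n-2k)$. The argument for $(iv)\Rightarrow(i)$ then goes exactly as in Theorem~\ref{t1111}: by Corollary~\ref{pro1111} the $\omega$-limit is $P_2$ or $P_3^+$, and the $P_2$ option is excluded by Theorem~\ref{p2222}$(iv)$.
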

Finally, if $l_\infty > -2k$, we announce our result on $P_4^+$-solutions.
\begin{theorem}
\lb{7777}
Assume $(\rho.1)$, $(\rho.2)$, $(\rho.4)$, $(\rho.5)$ and let $l_\infty>-2k$ $(\dl<0)$. 
Then, the following conditions are {equivalent}:
\bi
\item[(i)] $w(r)$ is a $P_4^+$-solution.
\item[(ii)] It holds that {as $r\to +\infty$,}
{
\aa{
w(r) = c_3\, r^{\frac{\dl k}{q-k}}(1+o(1)), \qquad
w'(r) = - c_4 \, r^{\frac{\dl k}{q-k}-1}(1+o(1)),
}}
where 
$c_3 = (c_{n,k} c_\rho^{-1} {\td{x}}{\td{y}}^k)^{\frac1{q-k}}$, 
$c_4 = (c_{n,k} c_\rho^{-1} {\td{x}}{\td{y}}^q)^{\frac1{q-k}}$,
${P_4^+}({\td{x}},{\td{y}}) = \big(\frac{q(n-2k)- k(n+l_\infty)}{q-k}, \frac{2k+l_\infty}{q-k}\big)$.
\item[(iii)] There exists $t_0\in \Rnu$ such that $\ffi(t_0)\in G_-$.
\ei
{If, additionally, 
 \aaa{
 \lb{d1111}
l_\infty < \frac{q(n-2k) - k(n+2\mu_2)}{k+\mu_2}
 \quad \text{or} \quad 
 l_\infty > \frac{q(n-2k) - k(n+2\mu_1)}{k+\mu_1}
 }
where $\mu_{1,2} = \frac{2q}{k} - 1\mp 2\sqrt{(\frac{q}{k})^2 - \frac{q}{k}}$ 
$(\mu_1<\mu_2)$,}
then items (i), (ii), (iii) are {equivalent} to the following:
\bi
\item[(iv)] There exist constants $c_1,c_2\in \Rnu$ such that 
\aa{
x(t) ={\td{x}}+  e^{\la_1 t}(c_1+o(1)), \qquad  y(t) = {\td{y}} + e^{\la_1 t}(c_2+o(1)) \; \; {(t\to +\infty)},
}
where $\la_1<0$ is the maximal eigenvalue of the linearized 
system \rf{limit1111} at the stationary point 
${P_4^+}({\td{x}},{\td{y}})=\big(\frac{q(n-2k)- k(n+l_\infty)}{q-k}, \frac{2k+l_\infty}{q-k}\big)$.
\ei
\end{theorem}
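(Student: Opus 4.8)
The plan is to follow the phase-plane scheme of Section~\ref{secP2P3}. Via the change of variables of Subsection~\ref{chanofv} ($t=\ln r$, $y(t)=\frac{rw'(r)}{-w(r)}$, $x(t)=c_{n,k}^{-1}r^{k}\rho(r)(-w(r))^{q}(w'(r))^{-k}$), a negative radial solution $w$ of \rf{P2P3} on $\Rnu^{n}$ corresponds to an orbit $\ffi(t)=(x(t),y(t))$ of \rf{LVSrho} that lies in $\Rnu_{+}^{2}$ and is defined for all $t\in\Rnu$; under $(\rho.4)$ the system is asymptotically autonomous with limit system $(LVS_{q,\nu_{+}})$, and the corollary of Thieme's theorem established in Subsection~\ref{useful} confines the $\om$-limit set of $\ffi$ to $\{P_2,P_3^+,P_4^+\}$, so that $w$ is a $P_4^+$-solution precisely when this set equals $\{P_4^+\}$. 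I would first record the local linear data of $(LVS_{q,\nu_{+}})$. At ${P_4^+}({\td x},{\td y})$ the Jacobian has trace ${\td y}-{\td x}=\frac{(n-2k)(q-q^*(k,l_\infty))}{q-k}$ and determinant $\frac{q-k}{k}{\td x}{\td y}>0$; since $(\rho.4)$ and monotonicity of $q^*$ in its second argument give $q>q^*(k,l_\infty)$, the trace is negative, so $P_4^+$ is asymptotically stable, and its eigenvalues are real exactly when $({\td y}-{\td x})^{2}\ge\frac{4(q-k)}{k}{\td x}{\td y}$, which, upon dividing by ${\td y}^{2}$ and using that $\mu_1,\mu_2$ are the roots of $\xi^{2}-(\frac{4q}{k}-2)\xi+1$, reduces to ${\td x}/{\td y}\notin(\mu_1,\mu_2)$, i.e.\ to \rf{d1111}. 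At $P_3^+(\nu_+,0)$ the eigenvalues are $-\nu_+<0$ and $-\dl=\frac{2k+l_\infty}{k}>0$, a saddle whose stable manifold is $\{y=0\}$; at $P_2(0,\frac{n-2k}{k})$ they are $-\gm<0$ and $\frac{n-2k}{k}>0$, a saddle whose stable manifold is the graph $\hat y$ of Theorem~\ref{p2222}, with slope $\hat y'(0)=-\frac{n-2k}{k^{2}\gm+k(n-2k)}$ at $P_2$.

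The core of the proof is the equivalence of (i) and (iii), and its key ingredient is that the half-plane $G_-$ is forward invariant for \rf{LVSrho}. Writing $H(x,y)=(k+1)x+k(q+1)y-(q+1)(n-2k)$ (so $G_{\mp}=\{H\lessgtr0\}\cap\Rnu_{+}^{2}$), a direct computation shows that on $\{H=0\}\cap\Rnu_{+}^{2}$ one has $\frac{d}{dt}H=(q+1)\bigl((n-2k)-ky\bigr)\bigl(\nu(t)-\frac{(q+1)(n-2k)}{k+1}\bigr)$; since $(\rho.2)$ gives $\nu(t)\le n+l_0$ and, through $q\ge q^*(k,l_0)$, also $n+l_0\le\frac{(q+1)(n-2k)}{k+1}$ with strict inequality somewhere, we get $\frac{d}{dt}H<0$ on $\{H=0\}\cap\Rnu_{+}^{2}\setminus\{P_2\}$, so no orbit crosses $\{H=0\}$ from $G_-$ into $G_+$. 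I would then check $H(P_4^+)=-(q-k){\td x}<0$ and $H(P_3^+)=(k+1)\nu_+-(q+1)(n-2k)<0$ (the latter because $q>q^*(k,l_\infty)$), so $P_3^+$ and $P_4^+$ lie in the open set $G_-$ while $P_2\in\partial G_-$. Hence: if $w$ is a $P_4^+$-solution, then $\ffi(t)\to P_4^+\in G_-$, so $\ffi(t_0)\in G_-$ for $t_0$ large; conversely, if $\ffi(t_0)\in G_-$, forward invariance traps $\ffi$ in $\ovl{G}_{-}$, so its $\om$-limit lies in $\ovl{G}_{-}\cap\{P_2,P_3^+,P_4^+\}$; it is not $P_3^+$, because near $P_3^+$ one has $\dot y/y\to-\dl>0$, incompatible with $y>0$ and $y\to0$; it is not $P_2$, because an orbit reaching the saddle $P_2$ must lie on its stable manifold, which near $P_2$ sits in the open $G_+$ (the inequality $q>q^*(k,l_\infty)$ is exactly what makes $|\hat y'(0)|<\frac{k+1}{k(q+1)}$, i.e.\ the stable manifold strictly flatter than $\{H=0\}$); therefore $\ffi\to P_4^+$. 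Finally, if $\ffi$ never meets $G_-$, its $\om$-limit lies in $\ovl{G}_{+}$, which contains neither $P_3^+$ nor $P_4^+$, hence equals $P_2$; combining these statements gives (i)$\Leftrightarrow$(iii) in the regime $\dl<0$.

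For (i)$\Leftrightarrow$(ii), starting from $\ffi(t)\to P_4^+$, hyperbolicity of $P_4^+$ for $(LVS_{q,\nu_{+}})$ and the decay $\zeta(t)=\nu(t)-\nu_+=O(e^{-\teta t})$ from $(\rho.5)$ give, via a variation-of-constants/Gronwall estimate on the linearization, that $|\ffi(t)-P_4^+|\to0$ at an exponential (hence integrable) rate. Inverting the change of variables then yields (ii): from $\frac{d}{dt}\ln(-w)=-y(t)\to-{\td y}$ together with integrability of $y-{\td y}$ one obtains $-w(r)=C\,r^{-{\td y}}(1+o(1))$ for some $C>0$, and $-{\td y}=\frac{\dl k}{q-k}$; feeding $\lim_{t\to\infty}x(t)={\td x}$ into $x=c_{n,k}^{-1}r^{k}\rho(r)(-w)^{q}(w')^{-k}$, with $\rho(r)\sim c_\rho r^{l_\infty}$ and $w'=\frac{y(-w)}{r}$, pins down $C=(c_{n,k}c_\rho^{-1}{\td x}{\td y}^{k})^{1/(q-k)}=c_3$ and gives $w'(r)={\td y}c_3\,r^{\frac{\dl k}{q-k}-1}(1+o(1))=c_4\,r^{\frac{\dl k}{q-k}-1}(1+o(1))$; conversely, those asymptotics force $x(t)\to{\td x}$, $y(t)\to{\td y}$, so $w$ is a $P_4^+$-solution. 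For (i)$+$\rf{d1111}$\Leftrightarrow$(iv): (iv)$\Rightarrow$(i) is immediate, and under \rf{d1111} the eigenvalues $\la_1>\la_2$ at $P_4^+$ are real and negative, so $P_4^+$ is a stable node; applying asymptotic-integration (Levinson-type) theory to the linearized equation with the $O(e^{-\teta t})$ forcing coming from $\zeta$ produces $\ffi(t)-P_4^+=e^{\la_1 t}(c+o(1))$ along the dominant eigendirection ($c$ possibly $0$ in the non-generic case), which is (iv).

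The step I expect to be the main obstacle is ruling out the $P_2$-case inside the equivalence (i)$\Leftrightarrow$(iii) in the non-autonomous setting: one has to invoke the asymptotically-autonomous stable-manifold theorem (or the roughness of exponential dichotomies) at the saddle $P_2$ to guarantee that an orbit trapped in $\ovl{G}_{-}$ cannot converge to $P_2$, and to verify that the perturbed stable manifold still lies in $G_+$ near $P_2$; this is precisely where the standing hypothesis $q\ge q^*(k,l_0)\,(>q^*(k,l_\infty))$ is used. The remaining ingredients --- tracking the constants in (ii), and reducing \rf{d1111} to the real-eigenvalue condition at $P_4^+$ --- are routine once the qualitative phase portrait is in place.
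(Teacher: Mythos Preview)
Your outline is correct and follows the paper's phase-plane strategy, but you make two steps harder than they need to be, and in doing so misidentify the ``main obstacle''.

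For (i)$\Rightarrow$(ii), you integrate $\frac{d}{dt}\ln(-w)=-y(t)$ and therefore need integrability of $y-\td y$, which forces you to first extract an exponential rate of convergence to $P_4^+$. The paper avoids this entirely: the inverse formula \rf{inverse0}, namely $w(r)=-\bigl(c_{n,k}^{-1}r^{2k}\rho(r)\bigr)^{-1/(q-k)}(x(t)y(t)^{k})^{1/(q-k)}$, gives $w(r)$ \emph{algebraically} in terms of $(x(t),y(t))$ and $\rho$, so the mere convergence $x(t)\to\td x$, $y(t)\to\td y$ together with $\rho(r)\sim c_\rho r^{l_\infty}$ (Lemma~\ref{krho}) already yields the asymptotics in (ii); no rate is needed. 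The expression for $w'$ then follows from $w'(r)=-w(r)y(\ln r)/r$.

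For (iii)$\Rightarrow$(i), your worry about a non-autonomous stable-manifold argument at $P_2$ is unnecessary. The paper simply quotes two results already proved: Corollary~\ref{pro1111} (under $\dl<0$ the $\om$-limit set is either $P_2$ or $P_4^+$) and item (iv) of Theorem~\ref{p2222} (a $P_2$-solution has $\ffi(t)\in G_+$ for \emph{all} $t$, a consequence of Lemma~\ref{inward}). Contraposing the latter: if $\ffi(t_0)\in G_-$ for some $t_0$, then $w$ is not a $P_2$-solution, hence it is a $P_4^+$-solution. No roughness of dichotomies, no perturbed stable manifold is invoked.

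For (i)$\Rightarrow$(iv) under \rf{d1111} your Levinson-type sketch is close to what the paper does: it writes the system in the form $\dot\psi=A\psi+\dlt(\psi)+\eta(t,\psi)$, diagonalizes via the eigenprojections $P_1,P_2$ of $A$ (both eigenvalues real and negative here), uses the semigroup representation as in \rf{psi2222} with $\tau=\bar\tau$, and combines \rf{est2222} with Lemma~7.1 of \cite{BattLi} to show that $e^{-\la_1 t}\psi(t)$ has a finite limit. One minor slip: your trace at $P_4^+$ is $\td y-\td x=-\frac{(n-2k)(q-q^*(k,l_\infty))}{q-k}$, not $+$, though your conclusion (trace $<0$) is unchanged.
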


\begin{remark}
\lb{rk1111}
Note that under $(\rho. 3)$, the positive limit $K(0)=\lim_{r\to 0}r^{-l_0}\rho(r)$
exists. Indeed, 
\aaa{
\lb{Kr1111}
(\ln K(r))' = -\frac{l_0}{r} + \frac{\rho'(r)}{\rho(r)} 
= \frac{R(r) - l_0}{r}.
}
Integrating from $\frac1n$ to r, we obtain that
\aa{
0\lt \int_0^r \ind_{[\frac1n,r]}  \frac{l_0-R(s)}{s} ds = 
\ln K\Big(\frac1{n}\Big) - \ln K(r) \lt\ln L -\ln K(r). 
}
By the monotone convergence theorem, the finite limit 
\aa{
\lim_{n\to +\infty} \int_{\frac1n}^r \frac{l_0-R(t)}{t} dt = \int_0^r  \frac{l_0-R(t)}{t} dt. 
}
exists. Therefore, the finite limit $\lim_{n\to +\infty} \ln K(\frac1n)$ also exists. 
This implies the existence of a positive limit 
$\lim_{n\to+\infty} K(\frac1n)  = K(0)$.
\end{remark}

\begin{remark}
\lb{rm2222}
Formula \rf{Kr1111} along with the argument
in Remark \ref{rk1111} imply that a general construction of weights $\rho(r)$,
satisfying $(\rho. 1)$--$(\rho. 3)$, can be done by the formula
\aaa{
\lb{rho1111}
\rho(r) = r^{l_0} K(r) = K(0) r^{l_0} \exp\Big\{\int_0^r \frac{R(s) - l_0}{s} ds\Big\},
}
where $R(\fdot)$ is to be chosen as in assumption $(\rho. 2)$.
\end{remark}

%

 \section{Existence and multiplicity results for Problem \rf{Eq:Ma:0}}
 \lb{sec3}

\subsection{Existence and non-existence of solutions of Problem \rf{Plambda}}
\lb{Sec:Exist}
In {the following}, we prove {an 
existence result for }classical solutions of Problem 
\rf{Plambda}.
\begin{lemma}\lb{max:sol}
Let $n > 2k$, $q > k$, and $l_0>2k$. 
Assume that conditions $(\rho. 1)$--$(\rho. 3)$ hold. 
Further assume there exists a negative subsolution $w\in\C^2([0,1))$ 
to Problem \rf{Plambda}.
Then, 
there exists a  classical maximal bounded solution to 
Problem \rf{Plambda}.
{Furthermore}, the classical maximal bounded solutions form a decreasing 
sequence as $\la$ increases.
\end{lemma}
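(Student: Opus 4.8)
The plan is to obtain the maximal solution by a monotone iteration starting from the trivial supersolution $\bar u\equiv 0$ and decreasing towards the given subsolution $w$. First I note that $\bar u\equiv 0$ is a supersolution of \rf{Plambda}: since $\rho>0$ on $(0,1]$ one has $S_k(D^2\bar u)=0\lt\la\rho(r)(1-\bar u)^q$ in $B$, $\bar u=0$ on $\partial B$, and $\bar u\in\Phi^k(B)$. As $w<0=\bar u$, the couple $(w,\bar u)$ is an ordered pair of sub/supersolutions, so, following the method of super and subsolutions for $k$-Hessian equations (see \cite[Theorem 3.3]{Wang94}), I set $u_0=\bar u$ and, recursively, let $u_{j+1}\in\Phi_0^k$ be the solution of $c_{n,k}r^{1-n}\big(r^{n-k}(u_{j+1}')^k\big)'=\la\rho(r)(1-u_j)^q$ in $I$ with $u_{j+1}(1)=0$. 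Solvability in $\Phi_0^k$ at each step (cf. \cite{CaNS85, Wang94}) holds because the right-hand side is nonnegative and continuous on $I$, and conditions $(\rho. 1)$--$(\rho. 3)$ (with $l_0>2k$) make $s^{n-1}\rho(s)(1-u_j(s))^q$ integrable near $r=0$, so $u_{j+1}$ is recovered by integrating \rf{int1111}; the admissibility inequalities and the conditions $u_{j+1}'(0)=0$, $u_{j+1}(1)=0$ then follow as in Remark \ref{negative}.

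Next, using the comparison principle for $k$-admissible functions and the fact that $t\mapsto(1-t)^q$ is nonincreasing on $(-\infty,1)$, I would prove by induction on $j$ that $w\lt\cdots\lt u_{j+1}\lt u_j\lt\cdots\lt u_0=0$ in $B$. Indeed, $u_j\lt u_{j-1}$ gives $S_k(D^2u_{j+1})=\la\rho(r)(1-u_j)^q\gt\la\rho(r)(1-u_{j-1})^q=S_k(D^2u_j)$ with equal (zero) boundary data, hence $u_{j+1}\lt u_j$; and $w\lt u_j$ gives $S_k(D^2w)\gt\la\rho(r)(1-w)^q\gt\la\rho(r)(1-u_j)^q=S_k(D^2u_{j+1})$ with $w\lt 0=u_{j+1}$ on $\partial B$, hence $w\lt u_{j+1}$. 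Thus $u_j(x)$ decreases pointwise to a limit $u(x)$ with $w\lt u\lt 0$; in particular $u$ is bounded, since $w\in\C(\overline B)$. As $(1-u_j)^q\uparrow(1-u)^q$ with $(1-u)^q\lt(1-w)^q$, monotone convergence lets me pass to the limit in \rf{int1111}, so $u$ is absolutely continuous and satisfies \rf{int1111}; a bootstrap in \rf{int1111} (using $u'>0$ on $(0,1]$) then gives $u\in\C^2(I)\cap\C^1(\overline I)$ with $u'(0)=0$, and the inequalities $\big(r^{n-i}(u')^i\big)'\gt 0$ survive in the limit. Hence $u\in\Phi_0^k$ is a bounded classical solution of \rf{Plambda}.

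To see that $u$ is maximal, let $v$ be any subsolution of \rf{Plambda}. Since $v\in\Phi^k(B)$, $\Delta v=S_1(D^2v)\gt 0$, so $v$ is subharmonic; as $v\lt 0$ on $\partial B$, the maximum principle gives $v\lt 0=u_0$ in $B$, and the induction of the previous paragraph, with $v$ replacing $w$, yields $v\lt u_j$ for every $j$, whence $v\lt u$. By Definition \ref{def1}, $u$ is the maximal solution. Finally, if $\la_1<\la_2$ and $u_{\la_1},u_{\la_2}$ denote the corresponding maximal solutions, then $S_k(D^2u_{\la_2})=\la_2\rho(r)(1-u_{\la_2})^q\gt\la_1\rho(r)(1-u_{\la_2})^q$ in $B$ and $u_{\la_2}=0$ on $\partial B$, so $u_{\la_2}$ is a subsolution of $(P_{\la_1})$; maximality of $u_{\la_1}$ then forces $u_{\la_2}\lt u_{\la_1}$. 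Thus the classical maximal bounded solutions form a decreasing sequence as $\la$ increases.

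The step I expect to be the main obstacle is the regularity claim in the second paragraph: showing that the pointwise monotone limit of the $k$-admissible iterates is a genuine $\Phi_0^k$-solution rather than merely an integral solution — in particular the behaviour at $r=0$ (namely $u'(0)=0$ and the survival of all $k$ admissibility conditions $\big(r^{n-i}(u')^i\big)'\gt 0$) and the local $\C^2$ bounds. In the radial setting this is controlled through the integral identity \rf{int1111}, and it is precisely here that the hypotheses $(\rho. 1)$--$(\rho. 3)$ and $l_0>2k$ enter; the comparison-principle bookkeeping, by contrast, is routine.
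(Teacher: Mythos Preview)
Your argument follows the same skeleton as the paper's proof: start the monotone iteration from $u_0\equiv 0$, use the comparison principle and the monotonicity of $t\mapsto(1-t)^q$ to get a decreasing sequence bounded below, pass to the limit in the integral identity, and finish with the same $\la_1<\la_2$ comparison for the monotonicity in~$\la$.

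There are two tactical differences worth noting. First, before iterating, the paper applies Wang's sub/supersolution theorem \cite[Theorem~3.3]{Wang94} (with a smooth cutoff of the nonlinearity) to produce a classical solution $u$ with $w\le u\le 0$, and then uses this $u$, rather than $w$, as the lower barrier for the iterates; the verification of Wang's hypotheses---specifically that $\rho(|x|)^{1/k}\in C^2(\bar B)$---is precisely where the assumption $l_0>2k$ enters. Your direct radial route (solve each linear step by integrating \rf{int1111}) is legitimate and more elementary, but note that it actually works under the weaker condition $l_0>-k$; the sentence attributing the role of $l_0>2k$ to mere integrability of $s^{n-1}\rho(s)$ is not accurate. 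Second, for the step $v\le 0$ in the maximality argument the paper again invokes Wang's theorem, whereas your observation that any $k$-admissible subsolution is subharmonic and hence $\le 0$ by the classical maximum principle is cleaner and suffices even for non-radial subsolutions, which is what Definition~\ref{def1} requires.

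The regularity of the limit, which you flag as the delicate point, is treated just as lightly in the paper: it simply asserts that passing to the limit in the integral form yields a solution of \rf{Plambda}. Your bootstrap sketch via \rf{int1111} is the right idea and is enough to land the limit in $\Phi_0^k$ (note that $\Phi_0^k$ only asks for $C^2$ on the open interval and $C^1$ up to $r=0$).
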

\begin{proof}
 Let $M_0 = \sup_B |w|$ and
 let $\zeta(y)$ be a smooth cutting function with the support $(-M_0 -\frac12, \frac12)$
 such that $\zeta(y) = 1$ for $y\in [-M_0,0]$.
 $\zeta(\fdot)$ could be a mollified indicator function for 
 $\ind_{[-M_0-\frac14,\frac14]}(\fdot)$.
 Then, $w$ is also a subsolution to the problem obtained from \rf{Plambda}
 by replacing the right-hand side of the equation in \rf{Plambda} with  
 $\td f(x,u) = \la \rho(|x|) (1-u)^q \zeta^k(u)$.  
 We will use the notation $(\td P_\la)$ for the 
 above-described problem.
  In what follows, we will make use of Theorem 3.3 in 
\cite{Wang94}. Introducing $\td f(x,u)$ as above, we are able 
 to verify the following conditions required in this theorem:
\aa{
D_x(\td f^\frac1k) \in \C^1(\bar B \x \Rnu), \quad D^2_u(\td f^\frac1k)\gt -C_0 >-\infty.
}
Indeed, for $r=|x|$, we have
$D_{x_ix_j}\rho(r)^\frac1k = \big(D^2_r - \frac{D_r}{r}\big)  \rho(r)^\frac1k
 \frac{x_ix_j}{r^2} +\frac{D_r}{r}  \rho(r)^\frac1k  \dl_{ij}$.
Since  $\frac{\rho'(r)}{\rho(r)}\sim \frac{l_0}{r}$ as $r\to 0$, 
by L'Hopital's rule,
\aa{
l_0 = \lim_{r\to 0}\frac{r}{\sfrac{\rho(r)}{\rho'(r)}}
= \lim_{r\to 0}\frac1{1-\frac{\rho(r)\rho''(r)}{\rho'(r)^2}}
=  \lim_{r\to 0}\frac1{1- \frac1{l_0^2}\frac{r^2\rho''(r)}{\rho(r)}}.
}
This implies that $\frac{\rho''(r)}{\rho(r)}\sim \frac{c}{r^2}$ ($r\to 0$) for 
some constant $c$. Next, a straightforward computation shows that
$\frac{D_r}{r}  \rho(r)^\frac1k  \sim c_1 \rho(r)^\frac1k \frac1{r^2}$ and
$D^2_r  \rho(r)^\frac1k \sim  c_2\rho(r)^\frac1k \frac1{r^2}$ ($r\to 0$)
for some constants $c_1$ and $c_2$. Note that
under $(\rho.1)$,  $\rho(|x|)^\frac1k$  is twice continuously 
differentiable at every point $x\ne 0$.
Thus, to ensure that $\rho(|x|)^\frac1k\in \C^2$, 
it suffices to notice that $\rho(r)^\frac1k \sim K(0) r^{\frac{l_0}{k}}$ ($r\to 0$).
In this case,
$\lim_{x\to 0} D^2_x \rho(|x|)^\frac1k = 0$ if $l_0>2k$.

Since that $w$ is a subsolution
to Problem $(\td P_\la)$ and $0$ is a supersolution to  the same problem,
 by  \cite[Theorem 3.3]{Wang94}, 
there exists a classical solution $u$ to Problem \rf{Eq:Ma:0} such that
 $w\lt u \lt 0$ (note that $\zeta(u)=1$).
 Next, we define $u_i$, $i=1,2,\ldots$, as the radial classical solution of
 \eq{
\lb{it1111}
S_k(D^2 u_i)= \la \rho(|x|)(1-u_{i-1})^q &\mbox{in }\;\; B,\\
u_i=0 &\mbox{on }\; \partial B.
}
where $u_0 = 0$.
By Remark  \ref{negative}, $u_i<0$ in $B$. 
Let us show that $u_i$, $i=1,2,\ldots$, form a decreasing sequence.
By the comparison principle (see, e.g. \cite{TrXu97}),
$u_1\gt u_2$. Suppose, as the induction hypothesis, that $u_{i-1}\gt u_i$.
Then, $S_k(D^2 u_{i+1}) \gt \la \rho(|x|)(1-u_{i-1})^q$. By the {same} principle,
$u_i\gt u_{i+1}$. Furthermore, we note that $u_i\gt u$ for all $i$. Indeed, the inequality
$u_1\gt u$ follows from the comparison principle. Suppose, 
as the induction hypothesis, that $u_{i-1}\gt u$. Then,
$S_k(D^2 u_i) \lt \la \rho(|x|)(1-u)^q$. Therefore, $u_i\gt u$
by {comparison}. Thus, there exists a function,
we denote it by $u_{max}$, which is the pointwise limit of $u_i$ as $i\to+\infty$.
Note that $u_{max}$ is a radial function. Moreover, it is a solution
to \rf{Plambda}. Indeed, Problem \rf{it1111}, can be reduced to an integral
form. Passing to the limit, we obtain that $u_{max}$ satisfies the 
integral equation equivalent to \rf{Plambda}.

{It remains to show that $u_{max}(|x|)$ is the maximal solution to \rf{Eq:Ma:0}.
Let $v$ be a subsolution to \rf{Eq:Ma:0}. 
Since the identical zero is a supersolution to \rf{Eq:Ma:0},
then, by \cite[Theorem 3.3]{Wang94},  $v\lt 0$. Then, $v$ is a subsoluton
to \rf{it1111} for $i=1$; therefore, $u_1\gt v$. By the same argument as above
(i.e., involving $v$ instead of $u$), we obtain that $u_i\gt v$ for all $i$. This implies that 
$u_{max}\gt v$.}
Now let $\la_1<\la_2$ and $u_{\la_1}$, $u_{\la_2}$ be maximal solutions of $(P_{\la_i})$ ($i=1,2$), respectively. Since $u_{\la_2}$ is a subsolution of $(P_{\la_1})$, we have $u_{\la_2} \lt u_{\la_1}$ by the maximality of $u_{\la_1}$.
  \end{proof}

%

\subsubsection{Proof of Theorem \ref{Exist}}

\begin{proof}
Let $B_2$ be the ball of radius $2$ centered at zero and the constant $C>0$ be
as defined in the statement of the theorem. 
It is straightforward to verify that 
{$\phi(x) = \frac12\big(\frac{C}{nc_{n,k} }\big)^\frac1k  (|x|^2 -4)$}
is a radial classical solution to
\eqq{
S_k(D^2\phi)= C &\mbox{in }\;\; B_2,\\
\phi=0 &\mbox{on }\; \partial B_2.
}
Let $\gm<0$ be a constant such that $\phi<\gm<0$ on $\partial{B}$.
Set $M= {2\big(\frac{C}{nc_{n,k} }\big)^\frac1k}=
\max_{x\in\overline{B}}\,|\phi(x)|$ and take $\la<(1+M)^{-q}$. 
Then, since $0\lt 1-\phi \lt 1+M$ and $\rho(|x|)\lt C$, for $x\in B$,
it holds that
\aa{
S_k(D^2\phi)=C \gt\la \rho({\abs{x}})(1-\phi)^q.
}
Since $\phi$ is a subsolution to {Problem} \rf{Plambda} and 
the identical zero is a supersolution, by Lemma \ref{max:sol},
for every $\la\in (0,(1+M)^{-q})$, there exists a 
solution $u_\la\in \C^2([0,1))$ to \rf{Plambda} such that $\phi\lt u_\la \lt 0$. 
Define
\aaa{
\lb{lambda-ast}
\la^\ast=\sup\{\la>0:\;\; \text{there exists a solution} \;\; u_\la\in \C^2([0,1)) \text{ to } \rf{Plambda}\}.
}
It is obvious that $\la^\ast>0$.
To show that $\la^\ast$ is finite, we consider the Newton inequality
\aaa{
\lb{Eq:Nineq}
\lap u\gt C(n,k)[S_k(D^2 u)]^\frac{1}{k},
}
where $C(n,k)>0$ is a constant and $u\in \Phi^k(B)$, see e.g. \cite[Proposition 2.2, part (4)]{Wang94}. 
We also need to consider the weighted eigenvalue problem
\eq{
\lb{E_w}
\tag{$E_w$}
-\lap u= \la w(x)u &\mbox{in }\;\; B,\\
u=0 &\mbox{on }\; \partial B,
}
where $w(x):=\rho(|x|)^\frac{1}{k}\in L^\infty(B)$. %
Let $\la_{1,w}>0$ be the first eigenvalue and $\phi_{1,w}>0$ their associated eigenfunction for problem \rf{E_w}, see, e.g., \cite[Theorem 0.6]{AmPr93}. It is easy to see that there exists a constant $L>0$ such that for every $u<0$, we have $(1-u)^\frac{q}{k}\gt \frac{L|u|}{C(n,k)}$.


Let $\la\in (0,\la^\ast)$ and let $u_\la\in \C^2([0,1))$ be a solution to \rf{Plambda}. 
Remark that $u_\la\lt 0$.
Then, by \rf{Eq:Nineq} and the integration-by-parts formula, 
\aa{
\la_{1,w}\int_B (-u_\la) w(x)\phi_{1,w} dx = \int_B (-u_\la) \lap \phi_{1,w} dx
\gt  L\la^\frac{1}{k}\int_B (-u_\la) w(x)\phi_{1,w} dx,
}
which implies that $\la>0$ is bounded from above, and hence,
$\la^\ast$ is finite.

Now, let $\la\in (0,\la^*)$. 
Take ${\delta}\in (0, \la^* - \la)$ and note that $u_{\la+{\delta}}$ is a subsolution for \rf{Plambda}.
By Lemma \ref{max:sol}, 
$u_\la\in \C^2([0,1))$ is a maximal bounded solution to \rf{Plambda}. 
By the same lemma, $u_\la$
decreases as $\la$ increases. Define $u_{\la^*} = \lim_{\la\to\la^*} u_\la$. Rewriting
\rf{Plambda} in the integral form, we obtain
\aaa{
\lb{integral1111}
- u_\la(r)= K_\la\int_{r}^{1}\tau^{\frac{k-n}k}\Big(\int_{0}^{\tau}s^{n-1}\rho(s)(1-u_\la(s))^q\,ds\Big)^{\frac{1}{k}}d\tau,
}
where $K_\la = c_{n,k}^{-\frac1k} \la^\frac1k$.
Let us show that in the above identity, we can pass to the limit as $\la\to\la^*$
on any compact interval $[\eps, 1] \sub (0,1]$. 

To prove this, we show first the boundedness of the family $u_\la$,
uniformly in $\la\in (0,\la^*)$, on  $[\eps, 1]$.
It is known that the change of variable $u(r) = -u_\la(r \la^{-\frac1{2k}})$
(see, e.g., \cite{OSS22}) reduces {Problem} \rf{Plambda} to 
 \eq{
 \lb{G1111}
  -S_k(D^2 u) = \rho_1(r) (1+u)^q, \\
  u(0) = a,
 }
 where $a = -u_\la(0)$ and $\rho_1(r) =  \rho(r \la^{-\frac1{2k}})$.
 Let $u(r,a)$ be the $\C^2([0,1))$-solution to \rf{G1111} (satisfying $u'(0) = 0$)
 and let {$D\gt 0$}. Define $R(D,a) = u^{-1}(\fdot,a)$.
Let us show that $R(D,a)$ is well-defined for $a> D$, that is,  
a zero of $u(r,{a)}-D$ exists. 
Suppose this is not the case. Since $u(r,a)$ is decreasing, 
then $u(r,a) > D$ for all $r\gt 0$.
From \rf{G1111}, we obtain that
  $u(r,a) \lt 
 a - \frac12(1+D)^\frac{q}{k} n^{-\frac1{k}}  r^2 \to -\infty$, as $r\to +\infty$.
 This is a contradiction. Hence, $R(D,a)$ is well-defined.
Let us show that  the function $[D,+\infty)\to \Rnu$, $a\mto R(D,a)$ 
is bounded from above. 
Rescale the equation in \rf{G1111},  by setting $\mu = R(D,a)^{2k}$, as follows:
\aa{
 -S_k\big(D^2 u(\mu^{\frac1{2k}}r)\big) =  \mu\, \rho_1(\mu^{\frac1{2k}} r) (1+u(\mu^{\frac1{2k}} r))^q.
 }
Rewriting it in the integral form, we obtain
  \mm{
u(\mu^{\frac1{2k}} r) =  D+R(D,a)^2  \int_r^1  t^{-\frac{n-k}{k}}
\Big(\int_0^t s^{n-1}\rho_1(\mu^{\frac1{2k}} s) 
\big(1+u(\mu^{\frac1{2k}} s)\big)^q ds\Big)^{\frac1k} dt\\
\gt  D+ R(D,a)^2  \int_r^1  t^{-\frac{n-k}{k}} dt\, 
\Big(\int_0^r s^{n-1}\rho_1(\mu^{\frac1{2k}} s) \big(1+u(\mu^{\frac1{2k}} r)\big)^q ds\Big)^{\frac1k}. 
}
Next, by $(\rho.3)$ and Remark  \ref{rk1111}, 
$\rho_1(\mu^{\frac1{2k}} s) =  \rho(\la^{-\frac1{2k}}\mu^{\frac1{2k}} s)
\gt L_1 \mu^{\frac{l_0}{2k}} s^{l_0}$ for some constant $L_1$.
Hence, 
 \aa{
 \sup_{u\gt D} \big\{(u-D) \, (1+u)^{-\frac{q}{k}}\big\} \gt (u(\mu^{\frac1{2k}} r) - D) 
\big(1+u(\mu^{\frac1{2k}} r)\big)^\frac{q}{k} 
 \gt L_2 R(D,a)^{2+\frac{l_0}{k}} (r^\frac{l_0+2k}k - r^\frac{n+l_0}k),
 }
 where $L_2>0$ is a constant.
 Evaluating the right-hand side at e.g. $r=\frac12$,
 we obtain an upper bound $\bar R(D)$ for $R(D,a)$ whenever $a\gt D$.
 It is straightforward  to verify that the supremum on the left-hand side
is $(1+D)^{1-\frac{q}{k}}$ up to a multiplicative constant. Therefore,
$\lim_{D\to+\infty} \bar R(D) = 0$.

Note that $u(\la^\frac1{2k},a) = 0$ for all $a > 0$. Take $\eps>0$.
Let $D>0$ be such that $\bar R(D) <  \la^\frac1{2k}\eps$. Then, 
on $[ \la^\frac1{2k}\eps, \la^\frac1{2k}]$, $u(\fdot, a) < D$ for $a>D$. 
If $a\lt D$, then clearly, $u(\fdot, a) \lt a \lt D$. The family 
$u(\la^\frac1{2k},a)$, parametrized by $a$, is therefore 
uniformly bounded on  $[ \la^\frac1{2k}\eps, \la^\frac1{2k}]$
for each fixed $\la\in (0,\la^*)$.
Consequently, the family $u_\la$, parametrized by $\la\in (0,\la^*)$,
 is uniformly bounded on $[\eps,1]$. 
 Therefore, for $r\in [\eps,1]$ and for $N\in \Nnu$,
 \mm{
 \lim_{\la\to\la^*} K_\la\int_r^1\tau^{\frac{k-n}k}
 \Big(\int_{\frac1N}^\tau s^{n-1}\rho(s)(1-u_\la(s))^q\,ds\Big)^{\frac{1}{k}}d\tau\\
 =K_{\la^*}\int_r^1\tau^{\frac{k-n}k}
 \Big(\int_{\frac1N}^\tau s^{n-1}\rho(s)(1-u_{\la^*}(s))^q\,ds\Big)^{\frac{1}{k}}d\tau
 \lt \lim_{\la\to\la^*} -u_\la(r) = -u_{\la^*}(r) \lt D.
 }
 Indeed, the passage to the limit is possible by the monotone convergence
 theorem and the {uniform boundedness in $\la$} of $-u_\la$ on $[\frac1N,1]$.
 Now let $N\to\infty$ and apply the monotone convergence theorem once again.
  The limit function $\tau^{\frac{k-n}k}
 \big(\int_0^\tau s^{n-1}\rho(s)(1-u_{\la^*}(s))^q\,ds\big)^{\frac{1}{k}}$ is then integrable 
 in $\tau$ on $[r,1]$. Therefore, $s^{n-1}\rho(s)(1-u_{\la^*}(s))^q$ 
 is integrable on $[0,\tau]$. This allows to pass to the limit in \rf{integral1111}
 as $\la\to \la^*$ by the dominated convergence theorem.
In addition, \rf{integral1111} implies that {$u_{\la^*}$} is an integral solution to 
$(P_{\la^*})$.

It remains to obtain a uniform lower bound for $\la^*$. For this, 
consider the function $v(r)=\frac{k}{q-k}\left(r^2-1\right)$ and
note that $v(|x|)<0$ on $B$ and
\mm{
S_k(D^2 v)=nc_{n,k}(2k)^k(q-k)^{-k}\gt \vec{n}{k}(2k)^k\frac{(q-k)^{q-k}}{q^q}(1-v)^q \\ \gt C^{-1}\vec{n}{k}(2k)^k\frac{(q-k)^{q-k}}{q^q}\rho(\abs{x})(1-v)^q.
}
Hence, $v(r)$ is a $\C^2([0,1))$-subsolution to  \rf{Plambda}  for all $\la\lt C^{-1}\binom{n}{k}(2k)^k\frac{(q-k)^{q-k}}{q^q}$. By Lemma \ref{max:sol}, there exists 
a maximal bounded classical solution to \rf{Plambda}. This
implies \rf{boundla1111} and concludes the proof.
\end{proof}

\subsection{Singular solution to \rf{Plambda}}
\lb{subsec:32}
The main part of this subsection consists of the construction of a singular solution, which is deduced directly from the dynamical system approach. 
For the reader convenience, we reintroduce some elements of this approach 
exposed in subsections 4.1, 4.2, 4.3
from \cite{MiSV19}, such as  the Lotka-\!Volterra system, the stationary points 
of $(LVS_{q,\nu_-})$, and some characteristics of the flow of \rf{LVSrho}.

\subsubsection{The new variables and the Lotka-\!Volterra system}\lb{chanofv}

Rewrite  {Problem} \rf{Plambda} as follows:
\eq{
\lb{RaPr:1}
\left(r^{n-k}(u')^k\right)'= c_{n,k}^{-1}\,r^{n-1}f(r,u),& 0<r<1,\\
u(r)<0, &0\lt r<1\\
u(1) =0,
}
where $f(r, u): =\la \rho(r)(1-u)^q$.
Let $u$ be a solution of \rf{RaPr:1} and set $w=u-1$. Then $w$ solves the equation
\aaa{
\lb{Eq:IVP:0}
\left(r^{n-k}(w')^k\right)'= r^{n-1}c_{n,k}^{-1}\,f(r,w+1),  \quad (r>0).
}
The following variables transform this equation into a Lotka-Volterra system:
\aaa{
\lb{newtrans0}
x(t)=r^k\frac{c_{n,k}^{-1}\,f(r,w+1)}{(w')^{k}},\; y(t)=r\frac{w'}{-w}, \;\, t=\ln(r),
}
whenever $r > 0$ is such that $w(r)\neq 0$ and $w'(r)\neq 0$.
 We consider the phase plane $(x,y)\in\Rnu^2$. Since we are studying negative
solutions, the points $(x(t),y(t))$ belong to the first quadrant when $w'>0$ (see Remark \ref{negative}).
We also remark that for $f(r,w+1)=\la \rho(r)(-w)^q$,
this change of variables
becomes optimal for equation \rf{Eq:IVP:0} since, depending on the weight $\rho$, it leads to either an autonomous or a non-autonomous Lotka-\!Volterra system. In fact, the variables in \rf{newtrans0} transform equation \rf{Eq:IVP:0} into the following quadratic non-autonomous system of differential equations of Lotka-\!Volterra-type
\eqq{
\frac{dx}{dt}=x\left(\nu(t)-x-q y\right),
\,\\
\frac{dy}{dt} = y\left(-\frac{n-2k}{k}+\frac{x}{k}+y\right)
}
(previously denoted by \rf{LVSrho}),
where $\nu(t)=n+r\frac{\rho'(r)}{\rho(r)}$\; and\; $t=\ln (r)$. In the  Appendix,
we provide a detailed derivation of the  Lotka-\!Volterra system \rf{LVSrho}.
Note that we can recover the function $w$ by the formula
\aaa{
\lb{inverse0}
w(r)=-\left(c_{n,k}^{-1}\la\,r^{2k}\rho(r)\right)^{-\frac{1}{q-k}}(x(t)y(t)^k)^{\frac{1}{q-k}},\; \mbox{where}\; \; r=e^t.
}
More specifically, if $(x(t),y(t))$ is a solution to  \rf{LVSrho},
it is straighforward to verify that $w(r)$, given by \rf{inverse0}, solves
\rf{Eq:IVP:0}.

Since the limit $\lim_{t\to - \infty} \nu(t) =\lim_{t\to - \infty}\left(n+\frac{\rho'(e^t)}{\rho(e^t)}\right)=:\nu_{-}$ exist, we may consider the system \rf{LVSrho} as an asymptotically autonomous system in the sense of H. Thieme (see \cite{Thie94}). Thus we can describe the flow of \rf{LVSrho} from the limit autonomous {system}
$(LVS_{q,\nu_{-}})$.

\begin{remark}
In \rf{LVSrho}, $TS$ stands for ``Thieme system''.
\end{remark}

\subsubsection{Stationary points and local analysis}
\lb{LinSPoints}
Since in the first part of the paper we study radially symmetric bounded solutions, we only focus on the autonomous system $(LVS_{q,\nu_{-}})$. In this case the decomposition
$\nu(t)=n+l_0-\kappa(t)$ is useful when $t\to -\infty$, where the function $\kappa$ is such that $\lim_{t\to -\infty}\abs{\kappa(t)}=0$.

Let $P=(a,b)$ be a stationary point of $(LVS_{q,\nu_{-}})$. {Introducing the coordinates 
$\bar x:=x-a,\, \bar y:=y-b$ and using the  above decomposition we can write \rf{LVSrho} as a time-dependent perturbation of $(LVS_{q,\nu_{-}})$}:
\aaa{
\lb{perturba0}
\begin{pmatrix}
\frac{d \bar x}{dt}\\
\\
\frac{d \bar y}{dt}
\end{pmatrix}
= A \begin{pmatrix}
 \bar x\\
\\
\bar y
\end{pmatrix}
+ \begin{pmatrix}
-\bar x^2 -q\bar x\bar y\\
\\
\frac{\bar x\bar y}{k} + \bar y^2
\end{pmatrix}
+ \begin{pmatrix}
-(\bar x+a)\kappa(t)\\
\\
0
\end{pmatrix}
}
with
\aa{
A:=
\begin{pmatrix}
n+l_0 -2a -qb & -qa\\
\\
\frac{b}{k} & \frac{a}{k} + 2b - \frac{n-2k}{k}
\end{pmatrix}
}
The {stationary} points of $(LVS_{q,\nu_-})$ are $P_1(0,0), P_2(0,\frac{n-2k}{k}), P_3(n+l_0,0)$, and
\begin{equation}\lb{interiorcritipoint:minus}
P_4\left(\frac{q(n-2k)- k(n+l_0)}{q-k}, \frac{2k+l_0}{q-k}\right)=: P_4(\hat x,\hat  y).
\end{equation}
Note that, under the assumptions $2k<n,\, k<q$ and $-2k<l_0$, the first three critical points belong to $\Rnu^2_+:=\{(x,y)\in\Rnu^2:x\gt 0,\,y\gt 0\}$ and they are saddle points. The fourth critical point $(\hat x,\hat  y)$ belongs to the interior of $\Rnu^2_+$ if, and only if, $q> k(n+l_0)/(n-2k)$. Further, $(\hat x,\hat  y)$ is a stable node for $q> q^*(k,l_0)=\frac{(n+2)k+l_0(k+1)}{n-2k}$. It is not difficult to see that the (bounded) orbit $(x(t),y(t))$ of $(LVS_{q,\nu_-})$ starts at $P_3(n+l_0,0)$ (see \cite{SaVe17}).
\medbreak

\subsubsection{The flow lines of \rf{LVSrho}}
\lb{TSqnu}
We define
\begin{eqnarray*}
S(t,x,y)&:=&\nu(t)-x-q y,\;\; t\in\bar{\Rnu},\\
W(x,y)&:=&-\frac{n-2k}{k}+\frac{x}{k}+y
\end{eqnarray*}
and write \rf{LVSrho} in the form $x' = x S(t,x,y)$, $y' = y W(x,y)$.
For  a function $F:\Rnu^+\times\Rnu^+\to\Rnu$, we define
\[
F_0:=F^{-1}\{0\},\;\; F_+:=F^{-1}\{\Rnu^+\},\;\; F_-:=F^{-1}\{\Rnu^-\}.
\]
Observe that $W_0$ is the fixed straight line $y=\frac{n-2k}{k}-\frac{x}{k}$. Furthermore, we define
\[
G(x,y):=x+\frac{(n-2k)(q+1)}{k+1}\left(\frac{k}{n-2k}\,y-1\right).
\]
Note that $G_0$ is the straight line $y=-\frac{k+1}{k(q+1)}\,x+\frac{n-2k}{k}$ with intercepts $\left(\frac{(n-2k)(q+1)}{k+1},0\right)$ and $(0,\frac{n-2k}{k})$.

\begin{lemma}\lb{inward}
Assume $(\rho.2)$.
Let $\ffi$ be a solution of \rf{LVSrho} on $(T_0,T)$. 
\begin{itemize}
\item [i)] If $\ffi(t_0)\in G_{-}\cup G_0\;\;\mbox{for}\;\;t_0\gt -\infty$, then $\ffi(t)\in G_-$ for $t>t_0$.
\item [ii)] If $\ffi(t_0)\in G_{+}\cup G_0\;\;\mbox{for}\;\;t_0>-\infty,$ then $\ffi(t)\in G_+$ for $-\infty\lt t<t_0$.
\end{itemize}
\begin{proof} For the proof see Lemma 4.1 in \cite{MiSV19} (with the obvious changes).
\end{proof}
\end{lemma}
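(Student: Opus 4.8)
\textbf{Proof proposal for Lemma \ref{inward}.}

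The plan is to show that the region $G_-$ is forward-invariant and $G_+$ is backward-invariant by checking the sign of the time-derivative of $G$ along trajectories on the boundary line $G_0$. Concretely, I would compute $\frac{d}{dt}G(\ffi(t))$ using the chain rule and the equations of \rf{LVSrho}, obtaining $\frac{d}{dt}G(x,y) = G_x\, x' + G_y\, y' = x' + \frac{(q+1)}{k+1}\,y'$, and then substitute $x' = x(\nu(t)-x-qy)$ and $y' = y(-\frac{n-2k}{k}+\frac{x}{k}+y)$. On the line $G_0 = \{G=0\}$ one has the relation $x = \frac{(n-2k)(q+1)}{k+1}(1-\frac{k}{n-2k}y)$, which should be used to eliminate $x$ (or part of it) and collapse the resulting expression. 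The goal is to verify that on $G_0$, the derivative $\frac{d}{dt}G(\ffi(t))$ is strictly negative (so trajectories cross $G_0$ transversally, from $G_+$ into $G_-$, and never come back), at least for $t$ in the relevant range; the hypothesis $(\rho.2)$ enters because $\nu(t) = n + R(e^t)$, and one needs a lower or upper bound on $\nu(t)$ (namely $\nu(t) \le n + l_0$ from $l_0 > R(r)$, or the analogous bound) to control the sign.

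Once transversality on $G_0$ is established, part (i) follows: if $\ffi(t_0) \in G_0$, then for $t$ slightly larger than $t_0$ the trajectory is in $G_-$, and it can never return to $G_0$ because at any hypothetical first return time $t_1 > t_0$ we would again have $\frac{d}{dt}G(\ffi(t_1)) < 0$, forcing $G(\ffi(t))$ to have been positive just before $t_1$ — contradicting that $\ffi$ stayed in $\overline{G_-}$. If instead $\ffi(t_0) \in G_-$ outright, the same no-return argument applied at the first exit time gives the conclusion. Part (ii) is the time-reversed statement: if $\ffi(t_0) \in G_+ \cup G_0$, then running backward, the trajectory stays in $G_+$ for $t < t_0$, by the same sign computation read in the other direction.

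Since the excerpt explicitly says ``For the proof see Lemma 4.1 in \cite{MiSV19} (with the obvious changes)'', the honest write-up is to reduce to that reference: the only difference from the autonomous (or previously-studied) setting is that $\nu$ is now $t$-dependent, but assumption $(\rho.2)$ supplies exactly the one-sided bound on $\nu(t)$ relative to $n+l_0$ that makes the sign computation go through verbatim. I would therefore state the computation of $\frac{d}{dt}G|_{G_0}$, note where the bound on $\nu(t)$ is used, and then cite \cite[Lemma 4.1]{MiSV19} for the remaining (unchanged) barrier argument. The main obstacle — really the only subtle point — is confirming that the sign of $\frac{d}{dt}G$ on $G_0$ is controlled uniformly in $t$ by the correct one of conditions $(\rho.2)$-(1)/(2); everything else is the standard Ważewski-type ``region cannot be re-entered'' reasoning.
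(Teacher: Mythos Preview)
Your proposal is correct and takes essentially the same approach as the paper, which simply defers to \cite[Lemma 4.1]{MiSV19}: compute $\frac{d}{dt}G(\ffi(t))$ on $G_0$, use $(\rho.2)$ to bound $\nu(t)\le n+l_0\le \frac{(n-2k)(q+1)}{k+1}$, and run the standard barrier argument. One small slip: $G_y=\frac{k(q+1)}{k+1}$, not $\frac{q+1}{k+1}$, so $\frac{d}{dt}G = x' + \frac{k(q+1)}{k+1}y'$; this does not affect the argument.
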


\subsubsection{Construction of a singular solution}
To construct a singular solution of \rf{Plambda}, we need the following technical lemma. 
For the proof, we redirect the reader to \cite[Lemma 4.3]{MiSV19},
where one needs to substitute $\mu$ with $l_0+2$ and $\frac{\mu e^{2t}}{1+e^{2t}}$
with $\kappa(t) = R(e^t) - l_0$.

\begin{lemma}\lb{S3L1}
Assume {conditions} ($\rho.1$)--($\rho.3$) hold.
Then, there exists a $t_0\in\Rnu$ such that 
\rf{LVSrho} admits a trajectory $(x(t),y(t))$  
with the property $x(t),y(t)\in \C^1(-\infty,t_0)$ and
\aa{
(x(t),y(t))\to(\hat x,\hat  y)\ \textrm{as}\ t\to -\infty,
}
where $(\hat x,\hat  y)$ is defined by \rf{interiorcritipoint:minus}.
\end{lemma}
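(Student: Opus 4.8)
The plan is to realize the desired trajectory as the unstable manifold of the stationary point $P_4(\hat x,\hat y)$ of the limiting autonomous system $(LVS_{q,\nu_-})$, perturbed by the non-autonomous term that vanishes as $t\to-\infty$. First I would recall from the local analysis in Subsection~\ref{LinSPoints} that, writing $\nu(t)=n+l_0-\kappa(t)$ with $\kappa(t)=R(e^t)-l_0$ and $\kappa(t)\to 0$ as $t\to-\infty$ (this is exactly assumption $(\rho.2)$ together with the existence of $l_0$), the system \rf{LVSrho} takes the form \rf{perturba0} in the shifted coordinates $\bar x=x-\hat x$, $\bar y=y-\hat y$. At $P_4$, since $q>q^*(k,l_0)$, the matrix $A$ has two eigenvalues with negative real part; hence for the \emph{reversed} time $s=-t$ the point $P_4$ is a hyperbolic \emph{source}, and one has a genuine unstable manifold emanating from it. The non-autonomous perturbation $-(\bar x+\hat x)\kappa(t)$ is small: by $(\rho.2)$ we have $\kappa(t)\to 0$ as $t\to-\infty$, so the perturbation is asymptotically negligible, and \rf{LVSrho} is an asymptotically autonomous system in the sense of Thieme \cite{Thie94} with limit system $(LVS_{q,\nu_-})$.

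The main step is then an invariant-manifold / fixed-point construction. Following \cite[Lemma 4.3]{MiSV19} with the substitution indicated in the statement ($\mu \leftrightarrow l_0+2$ and $\tfrac{\mu e^{2t}}{1+e^{2t}} \leftrightarrow \kappa(t)$), I would set up an integral equation for the sought orbit. Diagonalizing (or putting into real canonical form) the linear part $A$, and letting $\lambda$ denote the eigenvalue of smaller modulus among the two negative eigenvalues, one writes the candidate solution $(\bar x(t),\bar y(t))$ as a variation-of-constants integral
\ee{
\begin{pmatrix}\bar x(t)\\ \bar y(t)\end{pmatrix}
= e^{A(t-t_0)}\xi + \int_{-\infty}^{t} e^{A(t-\tau)}\,
\Big[N(\bar x(\tau),\bar y(\tau)) + P(\tau,\bar x(\tau))\Big]\,d\tau,
}
where $N$ collects the quadratic terms and $P(\tau,\bar x)=\big(-(\bar x+\hat x)\kappa(\tau),0\big)^{\!\tp}$ is the non-autonomous part; here the integral from $-\infty$ is legitimate precisely because $e^{A(t-\tau)}\to 0$ as $\tau\to-\infty$ (both eigenvalues negative) and because $\kappa(\tau)\to0$. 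One shows that, on a space of continuous functions $(T_0,t_0)\to\Rnu^2$ decaying like $e^{\lambda t}$ (equivalently $e^{-|\lambda|\,|t|}$) as $t\to-\infty$, equipped with the weighted sup-norm $\sup_{t\le t_0}e^{-\lambda t}|(\bar x(t),\bar y(t))|$, the right-hand side is a contraction once $t_0$ is chosen sufficiently negative so that $\sup_{t\le t_0}|\kappa(t)|$ and the radius of the ball are both small enough to dominate the quadratic and perturbation contributions. The unique fixed point gives a $\C^1$ trajectory on $(-\infty,t_0)$ with $(\bar x(t),\bar y(t))\to 0$, i.e. $(x(t),y(t))\to(\hat x,\hat y)$ as $t\to-\infty$. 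That $x(t),y(t)$ stay in $\Rnu^2_+$ for $t$ near $-\infty$ is automatic since $(\hat x,\hat y)$ lies in the open first quadrant (using $q>k(n+l_0)/(n-2k)$, which follows from $q>q^*(k,l_0)$).

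The hardest part is the careful bookkeeping in the contraction estimate: one must choose the decay rate, the ball radius, and the threshold $t_0$ in the right order so that the Lipschitz constant of the integral operator is strictly less than $1$ — in particular the term $\int_{-\infty}^t e^{A(t-\tau)}(\bar x(\tau)+\hat x)\kappa(\tau)\,d\tau$ must be controlled, and this is where $(\rho.2)$ (only $\kappa\to0$, no decay rate assumed) is exactly what is needed and suffices. Since this is entirely parallel to \cite[Lemma 4.3]{MiSV19} after the stated substitution, I would not reproduce the computation in full but simply indicate the substitution and verify that the properties of $\kappa$ used there ($\kappa\in\C^1$ near $-\infty$, $\kappa(t)\to0$) hold here under $(\rho.1)$--$(\rho.3)$, then invoke that lemma.
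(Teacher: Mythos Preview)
Your approach matches the paper's exactly: both simply invoke \cite[Lemma~4.3]{MiSV19} after substituting the explicit Matukuma perturbation $\frac{\mu e^{2t}}{1+e^{2t}}$ by $\kappa(t)=l_0-R(e^t)$ and checking that $(\rho.1)$--$(\rho.2)$ supply precisely the property $\kappa(t)\to0$ as $t\to-\infty$ used there. One slip in your sketch worth noting: since \emph{both} eigenvalues of $A$ at $P_4$ have negative real part, the homogeneous term $e^{A(t-t_0)}\xi$ in your integral equation would blow up as $t\to-\infty$ and must be dropped (equivalently, $\xi=0$); the correct fixed-point map is just $\bar z(t)=\int_{-\infty}^t e^{A(t-\tau)}[N(\bar z(\tau))+P(\tau,\bar z(\tau))]\,d\tau$ on the space of \emph{bounded} functions on $(-\infty,t_0]$, and the convergence $\bar z(t)\to0$ follows a posteriori from $\kappa(\tau)\to0$ and the quadratic nature of $N$.
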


{Now we are in position to construct a singular solution of \rf{Plambda}. For this, we use  the trajectory $(x(t),y(t))$ obtained in Lemma \ref{S3L1}. As we will see, the value $\td\la $ associated with the singular solution will depend on both the trajectory and the weight function $\rho$.}

\begin{proposition}
\lb{S3L2}
Assume that conditions $(\rho. 1)$--$(\rho. 3)$ hold.
Let $(x(t),y(t))$ be the trajectory obtained in Lemma \ref{S3L1}.
Then, for
\aaa{
\lb{tdla1111}
\la = \td \la:= \frac{c_{n,k}x(0)y(0)^k}{\rho(1)},
}
{Problem} \rf{Plambda}
possesses a singular solution $\td u$ with
the following asymptotic representation as $r\to 0:$
\aaa{
\lb{S3L2E12}
\td u(r)=-\left(\frac{c_{n,k}\hat x\hat  y^k}{\td\la \,K(0)}\right)^{\frac{1}{q-k}}r^{-\frac{2k+l_0}{q-k}}(1+o(1)).
}
\end{proposition}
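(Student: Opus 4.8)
\emph{Proof idea.} The plan is to manufacture $\td u$ directly from the orbit $(x(t),y(t))$ of \rf{LVSrho} furnished by Lemma \ref{S3L1} --- the one that tends to $P_4(\hat x,\hat y)$ as $t\to-\infty$ --- via the inverse change of variables \rf{inverse0}, taking $\la=\td\la$ precisely so as to force $u(1)=0$. Concretely, I would put $\td u(r)=1+w(r)$ with $r=e^t$ and $w$ given by \rf{inverse0} for $\la=\td\la$; by the derivation of the Lotka--Volterra system (see the Appendix), such a $w$ automatically solves \rf{Eq:IVP:0}, so $\td u$ solves the differential equation in \rf{Plambda}. It then remains to verify: (a) the orbit extends beyond $t=0$ and stays in the open first quadrant, so that $\td\la=c_{n,k}x(0)y(0)^k/\rho(1)$ is well defined and positive; (b) $\td u<0$ on $(0,1)$ and $\td u(1)=0$; (c) the asymptotics \rf{S3L2E12}; (d) $\td u$ is an integral solution in the sense of the definition of an integral solution above.

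For (a), the key point is that $P_4\in G_-$: a direct computation with $G$ and \rf{interiorcritipoint:minus} gives $G(\hat x,\hat y)=-\tfrac{q-k}{k+1}\hat x<0$, since $q>k$ and $\hat x>0$ (note $q\gt q^*(k,l_0)>k(n+l_0)/(n-2k)$). As the orbit converges to $P_4$ as $t\to-\infty$ and $G_-$ is open, it lies in $G_-$ for all sufficiently negative $t$; hence, by the invariance Lemma \ref{inward}(i), it lies in $G_-$ on its entire interval of existence. Since $G_-\cap\Rnu^2_+$ is the bounded triangle cut off by the coordinate axes and the line $G_0$, the orbit remains in a compact subset of the open first quadrant and therefore extends to all $t\in\Rnu$ with $x(t),y(t)>0$; in particular $x(0),y(0)>0$ and $\td\la>0$ is well defined. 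This already gives most of (b): from \rf{inverse0}, $x,y>0$ implies $w<0$, and from \rf{newtrans0} one has $w'(r)=-w(r)y(t)/r>0$, so $\td u$ is strictly increasing on $(0,1]$; evaluating \rf{inverse0} at $t=0$ and using $c_{n,k}^{-1}\td\la\,\rho(1)=x(0)y(0)^k$ yields $w(1)=-1$, i.e. $\td u(1)=0$, whence $\td u<0$ on $(0,1)$ by monotonicity.

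For (c) and (d): as $t\to-\infty$ one has $(x(t)y(t)^k)^{1/(q-k)}\to(\hat x\hat y^k)^{1/(q-k)}$ and $r^{-l_0}\rho(r)=K(r)\to K(0)>0$ by Remark \ref{rk1111}; substituting into \rf{inverse0} and using $\td u=1+w$ with $w\to-\infty$ gives \rf{S3L2E12}, which also shows $\td u$ is unbounded (since $l_0>2k>0$), hence a genuine singular solution. Moreover, \rf{S3L2E12} gives $s^{n-1}\rho(s)(1-\td u(s))^q=O\!\bigl(s^{\,n-1+l_0-q(2k+l_0)/(q-k)}\bigr)$ as $s\to0$, and the exponent exceeds $-1$ exactly because $q\gt q^*(k,l_0)=\tfrac{k(n+l_0)}{n-2k}+\tfrac{2k+l_0}{n-2k}$, which yields the integrability condition of the definition. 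Finally, integrating \rf{Eq:IVP:0} over $[\eps,r]$ and noting $r^{n-k}(\td u'(r))^k=r^{\,n-2k}(1-\td u(r))^k y(t)^k=O\!\bigl(r^{\,n-2k-k(2k+l_0)/(q-k)}\bigr)\to0$ as $\eps\to0$ (the exponent being positive for the same reason) gives the integral identity \rf{int1111} on $I$, so $\td u$ is an integral solution.

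I expect the only genuinely delicate issue to be (a): Lemma \ref{S3L1} only produces the orbit on some interval $(-\infty,t_0)$, whereas the very definition of $\td\la$ requires its values at $t=0$, so the extension is indispensable. Its proof, however, is short once one observes $P_4\in G_-$ and invokes the invariance Lemma \ref{inward}; the rest is bookkeeping around the explicit formula \rf{inverse0}, the limit $(x,y)\to(\hat x,\hat y)$, and the single inequality $q\gt q^*(k,l_0)$.
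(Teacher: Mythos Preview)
Your proof is correct and follows the same overall scheme as the paper's --- define $w$ via \rf{inverse0}, verify $w(1)=-1$ from the choice of $\td\la$, read off the asymptotics from $(x,y)\to(\hat x,\hat y)$ and $\rho(r)\sim K(0)r^{l_0}$, and check that $r^{n-k}(w')^k\to 0$ at the origin to obtain the integral identity --- but you diverge from the paper at the crucial step (a), the extension of the orbit to all of $\Rnu$.

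The paper does \emph{not} use the invariant region $G_-$ here. Instead it works on the $w$-side: after observing $w<0$, $w'>0$ on the initial interval and establishing the integral form \rf{S3L2E4}, it supposes the maximal existence interval of $w$ is $(0,\bar r)$ with $\bar r<\infty$ and derives a contradiction by showing directly that $w$ and $w'$ have finite limits at $\bar r$ (via the lower bound $w'(r)>(r_0/r)^{(n-k)/k}w'(r_0)$ and integration), so that $w$ can be continued past $\bar r$. Only \emph{after} obtaining $\bar r=\infty$ does the paper infer that $(x(t),y(t))$ is defined for all $t$ and evaluate at $t=0$ to identify $\td\la$. Your route --- compute $G(\hat x,\hat y)=-\tfrac{q-k}{k+1}\hat x<0$, invoke Lemma \ref{inward}(i) to trap the orbit in $G_-$, and combine boundedness of $G_-\cap\Rnu^2_+$ with invariance of the coordinate axes to get global existence directly in the phase plane --- is shorter and exploits machinery the paper has already set up but does not use at this point. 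The paper's argument is a self-contained ODE continuation that makes no appeal to Lemma \ref{inward}; yours is the more natural one within the dynamical-systems framework.
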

\begin{proof}
Since $(x(t),y(t))\to(\hat x,\hat  y)$ as $t\to-\infty$ and $\rho(r)\sim K(0) r^{l_0}$ at $r=0$, formula \rf{inverse0} implies
\aaa{
\lb{S3L2E1+}
u(r)  :=1+w(r)
=1-\left(c_{n,k}^{-1}\la r^{2k}\rho(r)\right)^{-\frac{1}{q-k}}\left(x(t)y(t)^k\right)^{\frac{1}{q-k}}
\to-\infty \; \textrm{as}\ r\to 0.
}
Then, $u(r)$ is a singular solution to the equation in \rf{Plambda}.
Derivating the above identity in $r$, we obtain
\aa{
(c_{n,k}^{-1}\la)^{\frac{1}{q-k}} w'(r) &=\left(r^{2k}\rho(r)\right)^{-\frac{1}{q-k}}r^{-1}\frac{1}{q-k}(x(t)y(t)^k)^{\frac{1}{q-k}}
\left\{ 2k+\frac{r\rho'(r)}{\rho(r)}-\left(\frac{x'(t)}{x(t)}+k\frac{y'(t)}{y(t)}\right)\right\}\\
&=r^{-\frac{q+k}{q-k}}\rho(r)^{-\frac{1}{q-k}}\frac{1}{q-k}(x(t)y(t)^k)^{\frac{1}{q-k}}(q-k)y(t).
}
Therefore, since $\rho(r)\sim K(0) r^{l_0}$ at $r=0$,  
there exists a constant $C>0$ such that  as $r\to 0$,
\aaa{
\lb{S3L2E2}
|r^{n-k}(w'(r))^k|
\lt 
Cr^{\frac{q(n-2k)-k(n+l_0)}{q-k}}\to 0.
}
Since \rf{LVSrho} has {trajectories} on the $x$-axis and $y$-axis, 
by the uniqueness of a solution for \rf{LVSrho}, the {trajectory} $(x(t),y(t))$ is not on the $x$-axis nor on the $y$-axis. Hence $x(t)>0$ and $y(t)>0$ as long as the solution $(x(t),y(t))$ exists. By \rf{inverse0},
\aaa{
\lb{S3L2E3}
w(r)<0.
}
The equation in \rf{Plambda} can be rewritten as
\aa{
r^{n-k}(w'(r))^k-s^{n-k}(w'(s))^k=\int_s^r\tau^{n-1}c_{n,k}^{-1}\la\rho(\tau)(-w(\tau))^qd\tau.
}
By \rf{S3L2E2}, the integral on the right-hand side converges at $0$. Therefore,
\aaa{
\lb{S3L2E4}
r^{n-k}(w'(r))^k=\int_0^r\tau^{n-1}c_{n,k}^{-1}\la\rho(\tau)(-w(\tau))^qd\tau.
}
Next we define
\aa{
\bar r:=\sup\{\sigma>0:\ \textrm{the solution $w(r)$ of (\ref{Eq:IVP:0}) exists on $(0,\sg)$}\}.
}
We show by contradiction that $\bar r=\infty$.
Suppose $\bar r<\infty$.
By \rf{S3L2E3} and \rf{S3L2E4}, $w'(r)>0$ on $(0,\bar r)$. Take $r_0\in (0,\bar r)$.
Integrating equation \rf{Eq:IVP:0} from $r_0$ to $\bar r$, we obtain
\aa{
r^{n-k}(w'(r))^k = r_0^{n-k}(w'(r_0))^k+\int_{r_0}^r\tau^{n-1}c_{n,k}^{-1}\la\rho(\tau)(-w(\tau))^qd\tau >r_0^{n-k}(w'(r_0))^k.
}
Therefore,
$w'(r)>\left(\frac{r_0}{r}\right)^\frac{n-k}{k}w'(r_0)>0$.
Integrating the latter inequality gives
\aa{
0>w(r)>w(r_0)+\int_{r_0}^r\left(\frac{r_0}{\tau}\right)^\frac{n-k}{k}w'(r_0)\,d\tau>-\infty.
}
Hence, the finite limit $\lim_{r\uparrow\bar r}w(r)$ exists. 
By \rf{S3L2E4},  the finite limit $\lim_{r\uparrow\bar r}w'(r)$ also exists.
Since $w(\bar r)<0$, $w'(\bar r)\neq 0$, and $w$ satisfies \rf{Eq:IVP:0}, $w(r)$ can be locally defined as the solution of \rf{Eq:IVP:0} in a right neighborhood of $r=\bar r$. This contradicts the definition of $\bar r$, and therefore $\bar r=\infty$.

Since $w(r)$ is defined on $(0,\infty)$,
\rf{newtrans0} yields that $(x(t),y(t))$ is defined for all $t\in\Rnu$.
Evaluating \rf{S3L2E1+} as $r=1$, we obtain that $\td \la$ is indeed defined by \rf{tdla1111}.

Finally, we define $\td u$ by \rf{S3L2E1+} with $\la = \td \la$ obtaining
$(\td\la ,\td u(r))$ as a desired singular solution.
\end{proof}

\subsection{Intersection number and proof of Theorem \ref{S4L4}}
\lb{InterNumber}
Here we are interested in studying the intersection number between a regular and a singular solution of suitable equations. This intersection number is the most
important ingredient in the proof of
 Theorem \ref{S4L4} on the multiplicity of solutions to Problem \rf{Plambda}. 
We start by showing the existence of a regular solution to the 
initial value problem
\eq{
\lb{IVPw}
(r^{n-k}(w'(r))^k)'= \la \, c_{n,k}^{-1}r^{n-1}\rho(r)(-w(r))^q, & r>0,\\
w(0)=w_0\in (-\infty,0),\;\;
w'(0)=0
}
and its characterization via a proper trajectory of system \rf{LVSrho}.
The existence result for problem \rf{IVPw} will be used in the proof of Theorem \ref{S4L4}.
\begin{proposition}
\lb{prop35}
Assume that conditions {$(\rho. 1)$--$(\rho.3)$} hold. 
 Further assume that $\rho(r)$ has at most polynomial growth as $r\to\infty$.
Then there exists a unique global solution $w$ of \rf{IVPw} in the regularity class $C^2(0,\infty)\cap C^1[0,\infty)$. Furthermore, the function $w$ defined by \rf{inverse0} is the unique solution of \rf{IVPw} if, and only if, the {trajectory} $(x(t),y(t))$ of system \rf{LVSrho} given by \rf{newtrans0} starts at the point $P_3(n+l_0,0)$.
\end{proposition}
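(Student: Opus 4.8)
The plan is to prove the two assertions of Proposition~\ref{prop35} separately: first the existence and uniqueness of a global $C^2(0,\infty)\cap C^1[0,\infty)$ solution of \rf{IVPw}, and then the equivalence between this solution being representable by \rf{inverse0} and the associated trajectory of \rf{LVSrho} emanating from $P_3(n+l_0,0)$.

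\medbreak
\emph{Local existence and uniqueness.} First I would rewrite \rf{IVPw} in the integral form
\aa{
w(r) = w_0 + \Big(\la c_{n,k}^{-1}\Big)^{\frac1k}\int_0^r \tau^{-\frac{n-k}{k}}\Big(\int_0^\tau s^{n-1}\rho(s)(-w(s))^q\,ds\Big)^{\frac1k}d\tau,
}
which already encodes the initial conditions $w(0)=w_0$, $w'(0)=0$ (the latter because the inner integral is $O(\tau^n)$ near $0$, using $\rho\in\C[0,\infty)$). On a small interval $[0,\delta]$ the right-hand side defines a contraction (or one applies a standard Schauder/monotone iteration, as was essentially done in the proof of Lemma~\ref{max:sol}) on a closed ball of continuous functions around the constant $w_0$, since $(-w)^q$ is locally Lipschitz away from $0$ and $w_0<0$ keeps us bounded away from $0$; this gives a unique local $C^1$ solution, and bootstrapping through the equation gives $C^2(0,\delta]$. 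Uniqueness on the maximal interval of existence is then inherited from the local statement by a connectedness argument.

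\medbreak
\emph{Global existence.} Next I would show the maximal interval of existence is all of $(0,\infty)$. As in the proof of Proposition~\ref{S3L2}, let $\bar r=\sup\{\sigma>0:\ w \text{ exists on } [0,\sigma)\}$ and suppose $\bar r<\infty$. From the integral identity $w$ is increasing and negative, so $-w$ is bounded by $-w_0$ on $[0,\bar r)$; feeding this bound back, together with the polynomial growth of $\rho$, one controls $r^{n-k}(w'(r))^k$ and hence $w'(r)$ and $w(r)$ up to $\bar r$, obtaining finite limits $\lim_{r\uparrow\bar r}w(r)<0$ and $\lim_{r\uparrow\bar r}w'(r)$. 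Since $w(\bar r)<0$ and the equation is regular there, the solution extends past $\bar r$, contradicting maximality; hence $\bar r=\infty$. (The polynomial-growth hypothesis on $\rho$ is exactly what is needed here to prevent the right-hand side from blowing up in finite $r$.)

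\medbreak
\emph{The equivalence with $P_3$.} For the second part, suppose $w$ is the global solution of \rf{IVPw}. Since $w(r)<0$ and $w'(r)>0$ for $r>0$, the change of variables \rf{newtrans0} with $f(r,w+1)=\la\rho(r)(-w)^q$ is well defined and produces a trajectory $(x(t),y(t))$ of \rf{LVSrho} in the open first quadrant for all $t\in\Rnu$. It remains to compute the limit of $(x(t),y(t))$ as $t\to-\infty$, i.e. as $r\to 0^+$: using $w(r)\to w_0\ne 0$, $w'(r)\to 0$, and $\rho(r)\sim K(0)r^{l_0}$ (Remark~\ref{rk1111}), one finds $y(t)=rw'/(-w)\to 0$; for $x(t)=r^k c_{n,k}^{-1}f(r,w+1)/(w')^k$ one substitutes the leading asymptotics of $w'$ obtained by integrating the equation once ($r^{n-k}(w')^k\sim \la c_{n,k}^{-1}(-w_0)^q K(0)\, r^{n+l_0}/(n+l_0)$, so $w'\sim \text{const}\cdot r^{(l_0+k)/k}$), which yields $x(t)\to n+l_0$. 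Thus the trajectory starts at $P_3(n+l_0,0)$. Conversely, if $(x(t),y(t))$ is the trajectory of \rf{LVSrho} starting at $P_3(n+l_0,0)$, then \rf{inverse0} defines a function $w$ on $(0,\infty)$ which (as noted after \rf{inverse0}) solves \rf{Eq:IVP:0}; using the asymptotics at $P_3$ together with $\rho\sim K(0)r^{l_0}$ one checks $w(r)\to w_0$ for some $w_0\in(-\infty,0)$ and $w'(r)\to 0$ as $r\to 0$, so this $w$ solves \rf{IVPw} with that initial value, and by the uniqueness established above it is \emph{the} solution. I expect the main obstacle to be this last asymptotic matching at $P_3$: one must pin down the precise rate at which $(x(t),y(t))\to(n+l_0,0)$ along the distinguished trajectory (equivalently, identify which eigendirection of the saddle $P_3$ the orbit leaves along) in order to justify that $w'(r)\to0$ and that $w$ has a genuine regular limit $w_0<0$ rather than vanishing or diverging; the error term $\kappa(t)=R(e^t)-l_0\to0$ from $(\rho.2)$ must be carried through these estimates, and this is where a Thieme-type asymptotic analysis near the saddle, or a direct ODE estimate as in \cite{MiSV19}, is required.
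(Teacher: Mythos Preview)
Your proposal has the right architecture, but the global-existence step contains a genuine gap.

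\textbf{The gap in global existence.} Your continuation argument shows the solution can be extended past any $\bar r$ at which $w(\bar r^-)<0$, and for that step polynomial growth of $\rho$ is actually irrelevant: on any bounded interval $\rho$ is bounded by $(\rho.1)$, and $0<-w\le -w_0$ already controls the right-hand side. What you do \emph{not} rule out is that $w$ reaches $0$ at some finite $r_0$. For radial Emden--Fowler problems this is precisely the subcritical phenomenon: if $q<q^*(k,l_0)$ the solution does vanish at finite radius, and no boundedness-based continuation can exclude it. Global negativity genuinely requires the supercritical hypothesis contained in $(\rho.2)$. The paper obtains it by invoking Theorem~4.1 of \cite{ClMM98}, verifying the Pohozaev-type sign condition $B(r)\le 0$ through
\[
\theta + R(s) < n + l_0 - \frac{(n-2k)(q+1)}{k+1} < 0,
\]
which is exactly $(\rho.2)$ rewritten. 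The polynomial-growth hypothesis enters only as a technical requirement of that cited theorem (it controls $r^n|\rho'(r)|$; see the remark following the proof), not to prevent finite-$r$ blow-up as you suggest.

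\textbf{The converse direction.} You correctly flag the asymptotic matching near $P_3$ as the delicate point but leave it open. The paper resolves it without Thieme-type machinery or any eigendirection analysis: set $z=y^{-1}$, so the $y$-equation of \rf{LVSrho} becomes the linear first-order ODE
\[
z' = z\Bigl(\frac{n-2k}{k}-\frac{x}{k}\Bigr)-1.
\]
Since $x(t)\to n+l_0$ as $t\to-\infty$, the coefficient tends to $-\tfrac{2k+l_0}{k}$, and variation of constants gives $y(t)=O\bigl(e^{\frac{2k+l_0}{k}t}\bigr)$. Feeding this and $\rho(r)\sim K(0)r^{l_0}$ into \rf{inverse0} shows $(-w(r))^{q-k}$ has a finite positive limit as $r\to0$; the identity $w'(r)=-w(r)\,y(\ln r)/r$ (obtained by differentiating \rf{inverse0}) then yields $w'(0)=0$. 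So the obstacle you anticipated dissolves with this substitution, and no appeal to \cite{MiSV19}-type perturbation estimates is needed. Your forward direction (computing $\lim x(t)$ by first integrating the equation) is correct and essentially equivalent to the paper's use of L'H\^opital on $r^n\rho(r)/(r^{n-k}(w')^k)$.
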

\begin{proof}
To apply Theorem 4.1 from \cite{ClMM98}, 
we define $B(r)=\int_0^r s^{\frac{(\al-\beta)(q+1)}{\beta+1}}(a(s)s^{\te})'ds$, $r>0$, with 
\aa{
\al=n-k;  \quad \beta=k, \quad  \gm=n-1, \quad 
a(s)=\la \, c_{n,k}^{-1}\rho(s), \quad  \te=\frac{\gm+1}{\beta+1}-\frac{(\al-\beta)(q+1)}{\beta+1}
} 
and note that 
the condition $B(r)\lt 0$ on $(0,\infty)$ follows from $(\rho.2)$. 
Indeed, we have
 $\te=n -\frac{(n-2k)(q+1)}{k+1}$, 
 $(a(s)s^\te)'= \la\, c_{n,k}^{-1} s^{\te-1}\rho(s)\big(\te+ R(s)\big)$, so for $s>0$,
\aa{
\te+R(s) <
n+l_0 -\frac{(n-2k)(q+1)}{k+1}<0,
}
where the last inequality follows from condition $(\rho.2)$.

Then the global existence of \rf{IVPw} follows from \cite[Theorem 4.1]{ClMM98}. 
The uniqueness follows from a {contraction mapping argument}, as in \cite{SaVe16}.

Next, let $w(r)$ be the unique solution of \rf{IVPw}. By \rf{newtrans0} and \rf{IVPw}, the function $y = y(t)$ satisfies
\aaa{
\lb{limi:y}
\lim_{t\to -\infty}y(t)=\lim_{r\to 0}r\,\frac{w'(r)}{-w(r)} = 0
}
and for $x=x(t)$, we have
$
\lim_{t\to -\infty}x(t)=\lim_{r\to 0}\la c_{n,k}^{-1}(-w(r))^q\frac{r^{k}\rho(r)}{(w'(r))^k}.
$
Now, by the equation in $\rf{IVPw}$ and L'H\^{o}\-pital's rule, we have
\eqm{
\lim_{r\to 0}\frac{r^{k}\rho(r)}{(w'(r))^k}&=\lim_{r\to 0}\frac{r^{n}\rho(r)}{r^{n-k}(w'(r))^k}=\lim_{r\to 0}\frac{r^n \rho'(r)+nr^{n-1}\rho(r)}{\la c_{n,k}^{-1} r^{n-1} \rho(r)(-w(r))^q}\\
&=\lim_{r\to 0}\frac{R(r)+n}{\la c_{n,k}^{-1} (-w(r))^q}=\lim_{r\to 0}\frac{\nu(\ln r)}{\la c_{n,k}^{-1} (-w(r))^q}=\frac{n+l_0}{\la c_{n,k}^{-1} (-w_0)^q}.
}
Thus, $\lim_{t\to -\infty}x(t)=n+l_0$. From here and \rf{limi:y}
we conclude that
\aaa{
\lb{start}
\lim_{t\to -\infty}(x(t),y(t))=(n+l_0,0)=P_3(n+l_0,0).
}
Conversely, suppose that $(x(t),y(t))\to P_3(n+l_0,0)$ as $t\to -\infty$. Rewriting the equation for $y$ in \rf{LVSrho}, we have
\aa{
y'=y\Big(-\frac{n-2k}{k}+\frac{x}{k}\Big)+y^2.
}
Let $z=y^{-1}$. Then, the above equation reduces to
\aa{
z'=z\Big(\frac{n-2k}{k}-\frac{x}{k}\Big)-1.
}
By simple computations, we obtain
\aa{
z(t)=z(t_0)e^{\gm(t_0)}-e^{\gm(t)}\int_{t_0}^{t}e^{-\gm(s)}ds
}
for some $t_0\in(T_0,T)$ and $\gm(t)=\int_{t_0}^{t}\left(\frac{n-2k}{k}-\frac{x}{k}\right)ds$.
Since $x(t)\to n+l_0$ as $t\to -\infty$, we see that 
$\gm(t)=-\frac{l_0+2k}{k} t +o(t)$, 
which also implies that $z(t)=O\left(e^{- \frac{l_0+2k}{k}\, t}\right)$ as $t\to -\infty$; in other words, we have $y(t)=O\left(e^{\frac{l_0+2k}{k}\, t}\right)$.
By the inverse transformation \rf{inverse0} and the fact that $\rho(r)\sim K(0) r^{l_0}$ at $r=0$, we obtain 
\begin{eqnarray*}
(-w(r))^{q-k}&=&\frac{c_{n,k}}{\la}\frac{x(\ln r)(y(\ln r))^k}{r^{2k}\rho(r)}=\frac{c_{n,k}}{\la}\frac{x(\ln r)\left(Cr^{\frac{l_0+2k}{k}}+o\left(r^{\frac{l_0+2k}{k}}\right)\right)^k}{r^{2k}\rho(r)}\\
&\to& \frac{c_{n,k}}{\la}(n+l_0)\td{C} = :(-w_0)^{q-k}\;\; (r\to 0),
\end{eqnarray*}
where $C$ and $\td{C}$ are positive constants independent of $t$.

On the other hand, differentiating the function in \rf{inverse0} with respect to $r$, we obtain
\aaa{
w'(r)&=\left(c_{n,k}^{-1}\la\,r^{2k}\rho(r)\right)^{-\frac{1}{q-k}}r^{-1}\frac{1}{q-k}(x(t)y(t)^k)^{\frac{1}{q-k}}
\left\{2k+\frac{r\rho'(r)}{\rho(r)}-\left(\frac{x'(t)}{x(t)}+k\frac{y'(t)}{y(t)}\right)\right\}\notag\\
&=-\frac{1}{q-k}\frac{w(r)}{r}\Big\{2k+R(r)-\Big(n+R(r)-x-qy+k\Big(-\frac{n-2k}{k}+\frac{x}{k}+y\Big)\Big)\Big\} \notag\\
&=-w(r)\,\frac{y(\ln r)}{r} \to 0 \;\; (r\to 0)
\lb{wr1111}
}
since $y(\ln r) = O(r^{\frac{l_0}{k} +2})$.
Hence, the function $w$, defined by \rf{inverse0}, is the unique solution of Problem \rf{IVPw} by the first statement of this proposition. This {ends} the proof.
\end{proof}
\begin{remark}
{
The condition on the polynomial growth of $\rho$ is required to satisfy
the condition $r^n|\rho'(r)| \lt C r^{{\mu}}$ (fulfilled for some constants $C>0$ and $\mu>-1$)
 {in order} to apply  Theorem 4.1 from \cite{ClMM98}. Remark that by $(\rho.2)$,
 $|\rho'(r)| \lt l_0 \rho(r) r^{-1}$ and, by $(\rho.1)$,  $\rho(r)$ is bounded on any 
 interval $[0,M]$, $M>0$. Therefore, the above condition, required by Theorem 4.1 
 from \cite{ClMM98}, is fulfilled if $\rho(r)$
 is of at most polynomial growth at $+\infty$.}
\end{remark}


Let $\td\la $ be as in Proposition \ref{S3L2} and consider the problem
\eq{
\lb{S4E1}
(r^{n-k}(U'(r))^k)'=c_{n,k}^{-1}\td\la  \, K(0) \, r^{n+l_0-1}(-U(r))^q, & r>0,\\
U(0)=-1,\quad 
U'(0)=0.
}
Further let 
\aa{
\td U (r):=-\left(\frac{c_{n,k}\hat x\hat  y^k}{\td\la \, K(0)}\right)^{\frac{1}{q-k}}
r^{-\frac{2k+l_0}{q-k}}.
}
Note that $\td U (r)$ is a singular solution of the first equation in \rf{S4E1}, {which is an Emden-Fowler-type equation corresponding to the system 
$(LVS_{q,\nu_-})$ with $\rho(r)= K(0)r^{l_0}$}.

In this case, substituting the constant trajectory $(x(t),y(t))=(\hat x,\hat y)$,
the number $\la=\td\la$, and  $\rho(r)= K(0)r^{l_0}$ into \rf{inverse0}, we obtain the function $\td U (r)$. 
Indeed, it suffices to compare the value $\td\la$ with the one defined in \cite[Theorem 3.1 (I)]{SaVe17}, as well as the corresponding singular solutions.

The next result say that there is infinitely many intersections between the singular solution 
$\td U (r)$ and the regular solution to \rf{S4E1}.

\begin{proposition}
\lb{S4P1}
Let $q^*(k,l_0)<q<q_{JL}(k,l_0)$ and let $U(r)$ be the unique 
solution to \rf{S4E1}.
Then
\aa{
\mc Z_{(0,\infty)}[\td U (\,\cdot\,)-U(\,\cdot\,)]=\infty,
}
where $\mc Z_{I}[\ffi]$ denotes the number of the zeros of the function $\ffi$ 
in the interval $I\sub \Rnu$. 
\end{proposition}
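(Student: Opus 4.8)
The plan is to pass to the Lotka--Volterra coordinates and reduce the statement about zeros of $\td U(\fdot) - U(\fdot)$ to a statement about the number of intersections of two trajectories of the \emph{autonomous} system $(LVS_{q,\nu_-})$ with $\rho(r) = K(0) r^{l_0}$. Indeed, by Proposition~\ref{prop35} applied with the pure-power weight $K(0) r^{l_0}$ (which trivially has polynomial growth and satisfies $(\rho.1)$--$(\rho.3)$ with $R(r) \equiv l_0$), the regular solution $U(r)$ of \rf{S4E1} corresponds via \rf{newtrans0} to the trajectory $\gamma_R(t) = (x_R(t), y_R(t))$ of $(LVS_{q,\nu_-})$ that \emph{starts} at $P_3(n+l_0, 0)$ as $t \to -\infty$; and by the computation in Proposition~\ref{S3L2} (again specialized to $\rho(r) = K(0) r^{l_0}$, $\la = \td\la$), the singular solution $\td U(r)$ corresponds to the \emph{constant} trajectory $\gamma_S \equiv (\hat x, \hat y) = P_4$. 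Since \rf{inverse0} gives $w(r) = U(r) - 1 = -\big(c_{n,k}^{-1}\td\la\, K(0)\big)^{-\frac1{q-k}} r^{-\frac{2k+l_0}{q-k}} (x(t)y(t)^k)^{\frac1{q-k}}$, with $r = e^t$, the difference $\td U(r) - U(r)$ vanishes exactly when $x_R(t) y_R(t)^k = \hat x \hat y^k$, i.e. when $\gamma_R(t)$ crosses the level curve $\{xy^k = \hat x \hat y^k\}$ of the autonomous system, and the claim becomes: this trajectory crosses that curve infinitely often as $t \to +\infty$.

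Next I would invoke the existing analysis of the autonomous system in the supercritical-but-sub-Joseph--Lundgren range. Under $q^*(k,l_0) < q < q_{JL}(k,l_0)$, the interior stationary point $P_4(\hat x, \hat y)$ is a stable \emph{spiral} (focus): the eigenvalues of the linearization $A$ at $P_4$ are complex with negative real part precisely when $q < q_{JL}(k,l_0)$ (this is exactly the content of the Joseph--Lundgren exponent; see \cite{SaVe17, Miya16}), while $q > q^*(k,l_0)$ only guarantees it is not a saddle. The trajectory $\gamma_R(t)$ emanating from the saddle $P_3(n+l_0,0)$ is bounded (it lies in $\Rnu^2_+$ and, by the flow analysis of $S_0$, $W_0$ and Lemma~\ref{inward}, stays in a compact region), so by Poincaré--Bendixson together with ruling out periodic orbits and other stationary points as $\om$-limit — $P_1, P_2, P_3$ are saddles and the region geometry excludes them — its $\om$-limit set must be $\{P_4\}$. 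Hence $\gamma_R(t) \to P_4$ as $t \to +\infty$, and since $P_4$ is a spiral, $\gamma_R$ winds around $P_4$ infinitely many times.

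The final step is to convert "winds around $P_4$ infinitely often" into "crosses the level set $\{xy^k = \hat x\hat y^k\}$ infinitely often." The function $H(x,y) = xy^k$ is smooth and positive on $\Rnu^2_+$ with $\nabla H \neq 0$ there, and $H(P_4) = \hat x \hat y^k$; the level curve $\Gamma = \{H = \hat x\hat y^k\}$ is a smooth curve through $P_4$ that is \emph{not} invariant under the flow (one checks $\frac{d}{dt} \ln H(\gamma(t)) = \frac{x'}{x} + k\frac{y'}{y} = \nu_- - x - qy + k(-\frac{n-2k}{k} + \frac{x}{k} + y) = \nu_- - (n-2k) - (q-k)y + 0\cdot x$, wait — more simply, this linear-in-$(x,y)$ expression vanishes only on a line through $P_4$ transverse to $\Gamma$). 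A spiral converging to $P_4$ must therefore cross $\Gamma$ infinitely often: on each half-turn around $P_4$ the sign of $H(\gamma(t)) - H(P_4)$ cannot be eventually constant, since that would force $\gamma$ to approach $P_4$ from one side of $\Gamma$, incompatible with the rotation of a focus. Translating back, $x_R(t) y_R(t)^k - \hat x\hat y^k$ changes sign infinitely often as $t \to +\infty$, hence $\td U(r) - U(r)$ has infinitely many zeros, i.e. $\mc Z_{(0,\infty)}[\td U(\fdot) - U(\fdot)] = \infty$.

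The main obstacle I anticipate is the rigorous justification that the trajectory $\gamma_R$ is genuinely a spiral near $P_4$ — that is, pinning down that $q < q_{JL}(k,l_0)$ makes the eigenvalues of $A$ at $P_4$ strictly complex, and that no delicate degenerate behavior (e.g. the trajectory tangentially approaching along an eigendirection, or a center-type situation at the boundary $q = q_{JL}$) occurs in the open range. This is where one must carefully quote or reprove the spectral computation from \cite{SaVe17}; everything else (boundedness of $\gamma_R$, exclusion of other $\om$-limits via Poincaré--Bendixson, non-invariance of $\Gamma$, and the topological "spiral crosses every transversal infinitely often" argument) is routine once that spectral fact is in hand.
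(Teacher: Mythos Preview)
Your approach is correct and essentially identical to the paper's: both pass to the autonomous system $(LVS_{q,\nu_-})$ with $\rho(r)=K(0)r^{l_0}$, identify $U$ with the trajectory from $P_3$ and $\td U$ with the constant orbit $P_4$, and use that $P_4$ is a stable spiral for $q^*(k,l_0)<q<q_{JL}(k,l_0)$ (quoting \cite[Section 6]{SaVe17}). The paper's execution is a touch cleaner: instead of arguing that the spiral crosses the level curve $\{xy^k=\hat x\hat y^k\}$ infinitely often, it takes the horizontal line $\{y=\hat y\}$ as transversal, obtaining a sequence $t_n$ with $y(t_n)=\hat y$ and $x(t_n)$ alternating about $\hat x$, so that $\td U(r_n)/U(r_n)=(\hat x/x(t_n))^{1/(q-k)}$ alternates above and below $1$ directly --- this bypasses your final topological step. (Minor slip: in this problem $U$ itself plays the role of $w$, not $U-1$, but this is harmless since the comparison of $\td U$ and $U$ comes out the same.)
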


\begin{proof}
By the local analysis at the point $(\hat x,\hat y)$ discussed in {\cite[Section 6]{SaVe17}}, we see that this point is a stable spiral for $q^*(k,l_0)<q<q_{JL}(k,l_0)$. 
Therefore, there exists a strictly increasing sequence $\{t_n\}_{n=1}^{\infty}$ such that 
for all $n\in \Nnu$,
$y(t_n)=\hat  y$ and $x(t_2)<x(t_4)< \dots < x(t_{2n})< \dots  < 
\hat x <\dots < x(t_{2n-1})<\dots < x(t_3)<x(t_1)$. Let $r_n:=e^{t_n}$. By \rf{inverse0} with 
$\rho(r)= K(0) r^{l_0}$ {and $\la=\td\la$}, we have
\aa{
\frac{\td U (r_n)}{U(r_n)} 
= \left(\frac{\hat x}{x(t_n)}\right)^{\frac{1}{q-k}} 
\begin{cases}
<1, & \mbox{if $n$ is odd},\\
>1, & \mbox{if $n$ is even.}
\end{cases}
}
Therefore $\mc Z_{(0,\infty)}[\td U (\fdot)-U(\fdot)]=\infty$.
\end{proof}

\begin{lemma}\lb{S4L1}
Let $\td u(r)$ be the singular solution constructed in Proposition \ref{S3L2}. 
Further let $\td w(r)=\td u(r)-1$ and 
$(\mc F_a\td w)(r) =\frac{1}{a}\,\td w (\frac{r}{a^\gm})$ for $r>0$ and $a>0$,
where $\gm:=(q-k)/(2k+l_0)$.
Then, as $a\to\infty$,
\aaa{
\lb{S4L1E0}
(\mc F_{a}\td w )(r)\to\td U (r)\ \textrm{in}\ \C_{loc}(0,\infty).
}
\end{lemma}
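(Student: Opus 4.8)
The plan is to exploit the explicit inverse transformation \rf{inverse0} together with the asymptotic representation of $\td u$ at $r=0$ established in Proposition \ref{S3L2}. First I would write out the scaled function explicitly: by \rf{inverse0}, for $r=e^t$,
\[
\td w(r) = -\left(c_{n,k}^{-1}\td\la\, r^{2k}\rho(r)\right)^{-\frac1{q-k}}(x(t)y(t)^k)^{\frac1{q-k}},
\]
so that
\[
(\mc F_a\td w)(r) = \frac1a\,\td w\!\left(\tfrac{r}{a^\gm}\right)
= -\frac1a\left(c_{n,k}^{-1}\td\la\, \left(\tfrac{r}{a^\gm}\right)^{2k}\rho\!\left(\tfrac{r}{a^\gm}\right)\right)^{-\frac1{q-k}}
\left(x(t-\gm\ln a)\,y(t-\gm\ln a)^k\right)^{\frac1{q-k}}.
\]
The key observation is that $\gm = (q-k)/(2k+l_0)$ is exactly the scaling exponent that makes the prefactor $\tfrac1a \cdot a^{\gm\cdot\frac{2k}{q-k}}\cdot a^{\gm\cdot\frac{l_0}{q-k}} = \tfrac1a\cdot a^{\frac{2k+l_0}{q-k}\cdot\gm}=\tfrac1a\cdot a = 1$ cancel precisely when $\rho$ is replaced by its leading behavior $K(0)r^{l_0}$.

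The main step is then to fix a compact interval $[r_1,r_2]\subset(0,\infty)$ and show uniform convergence there. As $a\to\infty$, for $r\in[r_1,r_2]$ one has $r/a^\gm\to 0$, so by $(\rho.3)$ and Remark \ref{rk1111}, $\rho(r/a^\gm)\sim K(0)(r/a^\gm)^{l_0}$; substituting this and using $(x(t),y(t))\to(\hat x,\hat y)$ as $t\to-\infty$ (Lemma \ref{S3L1}), the argument $t-\gm\ln a\to-\infty$ uniformly in $r\in[r_1,r_2]$, so $x(t-\gm\ln a)\to\hat x$ and $y(t-\gm\ln a)\to\hat y$ uniformly. Collecting the pieces, the prefactor converges to $\left(c_{n,k}^{-1}\td\la\,K(0)\right)^{-\frac1{q-k}}r^{-\frac{2k+l_0}{q-k}}$ (uniformly on $[r_1,r_2]$) and the trajectory factor to $(\hat x\hat y^k)^{\frac1{q-k}}$, yielding exactly the formula for $\td U(r)$ from the statement of Lemma \ref{S4L1}. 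Equivalently, and perhaps more cleanly, I would write $(\mc F_a\td w)(r)/\td U(r)$ as a product of $\big(\rho(r/a^\gm)(r/a^\gm)^{-l_0}/K(0)\big)^{-\frac1{q-k}}$ and $\big(x(t-\gm\ln a)y(t-\gm\ln a)^k/(\hat x\hat y^k)\big)^{\frac1{q-k}}$, and observe each factor tends to $1$ uniformly on $[r_1,r_2]$; since $\td U$ is bounded away from $0$ and $\infty$ on the compact interval, this gives \rf{S4L1E0}.

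The main obstacle — though it is more of a bookkeeping point than a genuine difficulty — is controlling the convergence $\rho(r/a^\gm)(r/a^\gm)^{-l_0}\to K(0)$ uniformly in $r$ over the compact interval rather than merely pointwise; this follows because $r/a^\gm$ ranges over an interval shrinking to $0$ as $a\to\infty$, and $K(s)=s^{-l_0}\rho(s)$ extends continuously to $s=0$ with value $K(0)$ (Remark \ref{rk1111}), hence is uniformly continuous near $0$. A second point requiring a word of care is that the convergence $(x(t),y(t))\to(\hat x,\hat y)$ as $t\to-\infty$ combined with $t-\gm\ln a\to-\infty$ uniformly for $r$ in a compact set already delivers uniform convergence of the trajectory factor, since $\gm\ln a\to+\infty$ while $t=\ln r$ stays bounded. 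No derivative estimates are needed because convergence is claimed only in $\C_{loc}$, not $\C^1_{loc}$.
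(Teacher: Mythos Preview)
Your proposal is correct and follows essentially the same approach as the paper. The only difference is presentational: the paper's proof directly invokes the asymptotic representation \rf{S3L2E12} from Proposition~\ref{S3L2}, writing $\td w(r)=-\left(\frac{c_{n,k}\hat x\hat y^k}{\td\la K(0)}\right)^{1/(q-k)}r^{-1/\gm}(1+\eps(r))$ with $\eps(r)\to 0$ as $r\to 0$, which already packages together both the trajectory convergence $(x,y)\to(\hat x,\hat y)$ and the weight asymptotic $\rho(r)\sim K(0)r^{l_0}$; you instead unpack these two ingredients separately from formula \rf{inverse0}, which is exactly how \rf{S3L2E12} was obtained in the first place.
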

\begin{proof}
By Proposition \ref{S3L2},
\[
\td w (r)=-\left(\frac{c_{n,k}\hat x\hat  y^k}{\td\la  \, K(0)}\right)^{\frac{1}{q-k}}r^{-\frac{1}{\gm}}(1+\epsilon(r)),
\]
where $\epsilon(r)$ satisfies $\limsup_{r\to 0}\epsilon(r)=0$.
Hence,
\aa{
\frac{1}{a}\,\td w \left(\frac{r}{a^{\gm}}\right)
=-\left(\frac{c_{n,k}\hat x\hat  y^k}{\td\la \, K(0)}\right)^{\frac{1}{q-k}}r^{-\frac{1}{\gm}}\left(1+\epsilon\left(\frac{r}{a^{\gm}}\right)\right)
\to \td U (r)\ \textrm{in}\ C_{loc}(0,\infty)\ \textrm{as}\ a\to\infty.
}
This concludes the proof.
\end{proof}

\begin{lemma}
\lb{S4L2}
Under  the assumptions of Proposition \ref{prop35}, we let
$w(r,a)$ be the unique solution to Problem \rf{IVPw} 
with $\la = \td \la$ and $w_0 = - a$
and let $(\mc F_a w)(r,a)$ be as in Lemma \ref{S4L1}.
Then, as $a\to\infty$,
\aaa{
\lb{S4L2E0}
(\mc F_a w)(r,a)\to U(r)\ \textrm{in} \ \C_{loc}[0,\infty),
}
where $U(r)$ is the unique solution to problem \rf{S4E1}.
\end{lemma}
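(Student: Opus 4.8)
The plan is to read the statement as a continuous-dependence result for the model Emden-Fowler problem \rf{S4E1}. By Proposition \ref{prop35}, $w(\cdot,a)$ is a global $\C^2(0,\infty)\cap\C^1[0,\infty)$ solution of \rf{IVPw} with $\la=\td\la$ and $w_0=-a$, hence so is $v_a(r):=(\mc F_a w)(r,a)=\tfrac1a\,w\!\big(r a^{-\gm},a\big)$. The exponent $\gm=(q-k)/(2k+l_0)$ and the value $\td\la$ are chosen precisely so that substituting $v_a$ into \rf{IVPw} (changing variables $s=r a^{-\gm}$, using $w'(0,a)=0$ and $\rho(s)=K(s)\,s^{l_0}$) makes every power of $a$ cancel — this is exactly the identity $\gm(2k+l_0)=q-k$ — leaving
\aaa{
\lb{S4L2scaled}
\big(r^{n-k}(v_a'(r))^k\big)'=\td\la\,c_{n,k}^{-1}\,r^{n+l_0-1}\,K\!\big(r a^{-\gm}\big)\,(-v_a(r))^q,\qquad v_a(0)=-1,\ v_a'(0)=0 .
}
Thus $v_a$ solves \rf{S4E1} with the constant $K(0)$ replaced by $K(r a^{-\gm})$; since $\rho(r)\sim K(0)r^{l_0}$ at $r=0$, the latter converges to $K(0)$ locally uniformly in $r$ as $a\to\infty$ (Remark \ref{rk1111}), so one expects $v_a\to U$, and the convergence will follow from compactness plus the uniqueness of $U$.

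Rewriting \rf{S4L2scaled} in integral form,
\aa{
v_a(r)=-1+\int_0^r\tau^{\frac{k-n}{k}}\Big(\td\la\,c_{n,k}^{-1}\int_0^\tau s^{n+l_0-1}K\!\big(s a^{-\gm}\big)(-v_a(s))^q\,ds\Big)^{\frac1k}d\tau ,
}
and likewise for $U$ with $K(s a^{-\gm})$ replaced by $K(0)$. Since $w(\cdot,a)$ is negative and non-decreasing with $w(0,a)=-a$ (cf. Remark \ref{negative}), $v_a$ is negative, non-decreasing, and $v_a(0)=-1$, so $0<-v_a(r)\lt1$ for all $r\gt0$, uniformly in $a$. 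Using $K\lt L$ from $(\rho.3)$ and $n+l_0>0$, the inner integral is $\lt\tfrac{L}{n+l_0}\tau^{n+l_0}$, hence $|v_a'(\tau)|\lt C\,\tau^{\frac{l_0}{k}+1}$ with $C$ independent of $a$. Therefore $\{v_a\}$ is uniformly bounded and uniformly Lipschitz on each $[0,M]$; by the Arzel\`a-Ascoli theorem and a diagonal extraction over $M\to\infty$, every sequence $a_j\to\infty$ has a subsequence along which $v_{a_j}\to v_\infty$ in $\C_{loc}[0,\infty)$.

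It remains to identify $v_\infty$. For each fixed $s>0$, $K(s a_j^{-\gm})\to K(0)$ and $(-v_{a_j}(s))^q\to(-v_\infty(s))^q$, with all integrands dominated by $L\,s^{n+l_0-1}\in L^1[0,\tau]$; by dominated convergence, $\int_0^\tau s^{n+l_0-1}K(s a_j^{-\gm})(-v_{a_j}(s))^q\,ds\to\int_0^\tau s^{n+l_0-1}K(0)(-v_\infty(s))^q\,ds$. Using the continuity of $t\mto t^{1/k}$ and the $a$-independent bound $\tau^{\frac{k-n}{k}}(\fdot)^{1/k}\lt C\tau^{l_0/k+1}\in L^1[0,r]$, a second application of dominated convergence passes to the limit in the integral equation, so $v_\infty$ solves the integral form of \rf{S4E1} with $v_\infty(0)=-1$, $v_\infty'(0)=0$. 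By uniqueness for this problem — the uniqueness part of Proposition \ref{prop35} applied with the weight $K(0)r^{l_0}$, or \cite{SaVe16,SaVe17} — we get $v_\infty=U$. Since the subsequential limit is always $U$, the whole family converges, which is \rf{S4L2E0}.

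I expect the main difficulty to be twofold. First, the scaling computation must be carried out carefully enough that the powers of $a$ cancel exactly; this depends on the precise values of $\gm$ and $\td\la$ and on the expansion $\rho\sim K(0)r^{l_0}$ near $0$. Second, the operator degenerates at $r=0$, where $\tau^{(k-n)/k}$ is singular, so the estimate $|v_a'(\tau)|\lt C\tau^{l_0/k+1}$ and the integrability of the dominating functions must be uniform in $a$ — which is exactly where $l_0>0$ (indeed $l_0>2k$) is used. Using compactness and uniqueness, rather than a direct Gronwall estimate, also avoids the non-Lipschitz behaviour of $t\mto t^{1/k}$ near $0$ when $k\gt2$.
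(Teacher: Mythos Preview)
Your proof is correct and follows essentially the same approach as the paper: verify that the rescaled function $v_a$ solves \rf{S4E1} with $K(0)$ replaced by $K(r a^{-\gm})$, obtain the uniform bound $-1\lt v_a\lt 0$ and the derivative estimate $|v_a'|\lt C\tau^{(l_0+k)/k}$ from $(\rho.3)$, extract a $\C_{loc}$-convergent subsequence by Arzel\`a--Ascoli, pass to the limit in the integral equation, and conclude by uniqueness of $U$. One minor remark: only $\gm$ is needed for the powers of $a$ to cancel in the scaling computation---the value $\td\la$ plays no special role there and simply carries through.
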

\begin{remark}
In Proposition \ref{S4P1} and Lemma \ref{S4L2}, the existence
of unique solutions is known due to Proposition \ref{prop35}. 
\end{remark}
\begin{proof}[Proof of Lemma \ref{S4L2}]
Note that the function $\bar w(r,a):=(\mc F_a w)(r,a)$ is a solution to
\eqq{
(r^{n-k}(\bar w')^k)'=r^{n-1}c_{n,k}^{-1}\td\la \, r^{l_0}K(\frac{r}{a^\gm})(-\bar w)^q, & r>0,\\
\bar w(0,a)=-1,\quad
\bar w_r(0,a)=0.
}
Since $-a\lt w(r,a)\lt 0$ for $r\gt 0$, we see that $-1\lt\bar w(r,a)\lt 0$ for $r\gt 0$.
Integrating the above equation over $[0,r]$ and recalling that $0<K(r)< L$ for $r>0$ by
$(\rho.3)$, we obtain
\aaa{
\lb{S4L2E1}
r^{n-k}(\bar w_r)^k=\int_0^rc_{n,k}^{-1}\td\la \, s^{n+l_0-1}K
\left(\frac{s}{a^\gm}\right)(-\bar w)^qds.
}
Then,
\aa{
|\bar w_r(r,a)|
\lt\Big(r^{-n+k}\int_0^rc_{n,k}^{-1}\td\la \, L\, s^{n+l_0-1}ds\Big)^{1/k}
\lt\Big(\frac{c_{n,k}^{-1}\td\la \, L\,}{n+l_0}\Big)^{1/k}r^{\frac{k+l_0}{k}}.
}
Let $I\sub [0,\infty)$ be an arbitrary compact interval containing $0$.
Note that the family $\bar w(r,a)$ is uniformly bounded and equicontinuous on $I$.
By the Ascoli-Arzel\`a theorem, there exists a sequence $a_m\to +\infty$ (as $m\to\infty$) 
and a function $\bar w^*(r)\in C(I)$ such that as $m\to\infty$,
$\bar w(r,a_m)\to\bar w^*(r)\ \textrm{in}\ C(I)$.
By (\ref{S4L2E1}),
\aa{
\bar w(r,a_m)=-1+\int_0^r\Big( t^{-n+k}\int_0^tc_{n,k}^{-1}\td\la s^{n+l_0-1}K\Big(\frac{s}{a_m^\gm}\Big)(-\bar w(s,a_m))^qds\Big)^{1/k}dt.
}
Passing to the limit in this equation as $m\to\infty$ and noticing that
$K(\frac{s}{a_m^\gm})\to K(0)$, we obtain 
\aa{
\bar w^*(r)=-1+\int_0^r\Big(t^{-n+k}\int_0^tc_{n,k}^{-1}\td\la \,K(0)s^{n+l_0-1}(-\bar w^*(s))^qds\Big)^{1/k}dt\;\; \textrm{for}\ r\in I.
}
Therefore, $\bar w^*(r)\in \C^2(\mathring I)\cap \C^1(I)$ 
(here $\mathring I$ denotes  the interior of $I$)
and $\bar w^*(r)$ is the solution
to \rf{S4E1}.
By uniqueness, $\bar w^*(r)=U(r)$ for $r\gt 0$. Convergence \rf{S4L2E0}
holds by uniqueness of the limit point for the family $\mc F_a w$.
\end{proof}

\begin{lemma}\lb{S4L3}
Under the assumptions of Proposition \ref{S4P1},
\aaa{
\lb{S4L3E0}
\mc Z_{[0,1]}[\td w (\,\cdot\,)-w(\,\cdot\,,a)]\to\infty\ \textrm{as}\ a\to\infty.
}
\end{lemma}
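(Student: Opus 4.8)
The plan is to push the problem forward through the rescaling operator $\mc F_a$ and then invoke Proposition \ref{S4P1}, which already supplies infinitely many intersections between $\td U$ and $U$. The first step I would carry out is to record that the intersection count is scaling invariant: from $(\mc F_a\td w)(r)=\frac1a\,\td w(r/a^{\gm})$ and $(\mc F_a w)(r,a)=\frac1a\,w(r/a^{\gm},a)$, the substitution $r=s\,a^{\gm}$ gives
\[
\td w(s)-w(s,a)=a\big[(\mc F_a\td w)(s\,a^{\gm})-(\mc F_a w)(s\,a^{\gm},a)\big],
\]
so $s\mapsto s\,a^{\gm}$ is a bijection between the zeros of $\td w-w(\fdot,a)$ in $(0,1]$ and those of $(\mc F_a\td w)-(\mc F_a w)(\fdot,a)$ in $(0,a^{\gm}]$; hence $\mc Z_{[0,1]}[\td w-w(\fdot,a)]=\mc Z_{[0,a^{\gm}]}[(\mc F_a\td w)-(\mc F_a w)(\fdot,a)]$. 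Near the origin there is nothing to check: $\td w(r)\to-\infty$ as $r\to 0$ while $w(0,a)$ is finite, so the difference has no zeros close to $0$ and the left endpoint is harmless.

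Next I would fix an arbitrary $N\in\Nnu$ and extract from the proof of Proposition \ref{S4P1} the points $r_1<r_2<\dots<r_{N+1}$ (the numbers $r_n=e^{t_n}$ appearing there) at which $\td U(r_n)/U(r_n)$ is alternately $<1$ and $>1$. Since $U<0$, this forces $(\td U-U)(r_n)\ne 0$ with alternating sign for $n=1,\dots,N+1$. I would then set $\delta:=r_1/2$ and $M:=2r_{N+1}$, so that $[\delta,M]\subset(0,\infty)$ is compact and contains all the $r_n$. By Lemma \ref{S4L1}, $\mc F_a\td w\to\td U$ uniformly on $[\delta,M]$ as $a\to\infty$, and by Lemma \ref{S4L2}, $(\mc F_a w)(\fdot,a)\to U$ uniformly on $[\delta,M]$; hence $(\mc F_a\td w)-(\mc F_a w)(\fdot,a)\to\td U-U$ uniformly on $[\delta,M]$. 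Because the limit stays bounded away from $0$ at each $r_n$, for all $a$ large enough the difference $(\mc F_a\td w)-(\mc F_a w)(\fdot,a)$ has the same nonzero, alternating sign as $\td U-U$ at $r_1,\dots,r_{N+1}$, and so, by the intermediate value theorem, it has at least $N$ zeros in $[\delta,M]$.

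Finally, since $\gm=(q-k)/(2k+l_0)>0$ one has $a^{\gm}\ge M$ for all large $a$, so $[\delta,M]\subset[0,a^{\gm}]$ and therefore $\mc Z_{[0,a^{\gm}]}[(\mc F_a\td w)-(\mc F_a w)(\fdot,a)]\ge N$ eventually. Combining this with the scaling identity of the first step yields $\mc Z_{[0,1]}[\td w-w(\fdot,a)]\ge N$ for all sufficiently large $a$, and since $N$ was arbitrary, \rf{S4L3E0} follows.

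I expect the only place that needs care to be the transfer step: keeping the rescaling, the stretched interval $[0,a^{\gm}]$, and the benign singularity of $\td w$ at $r=0$ consistent with one another, and — crucially — making sure the sample points $r_n$ are chosen where $\td U-U$ is strictly nonzero, so that the alternating sign pattern genuinely survives the uniform limit. Granting Lemmas \ref{S4L1}, \ref{S4L2} and Proposition \ref{S4P1}, no further estimates are needed.
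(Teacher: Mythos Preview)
Your proof is correct and follows the same overall strategy as the paper: rescale via $\mc F_a$, invoke the convergence Lemmas \ref{S4L1} and \ref{S4L2}, and feed in the infinitely many intersections of $\td U$ and $U$ from Proposition \ref{S4P1}. The scaling identity and the interval bookkeeping are also handled the same way.

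Where you differ is in the transfer step. The paper first shows that every zero of $V=U-\td U$ is \emph{simple} by deriving a second-order linear ODE for $V$ and citing an external result, so that near each isolated simple zero of $\td U-U$ the approximating difference $\mc F_a\td w-\mc F_a w$ must vanish for $a$ large. You instead reach inside the proof of Proposition \ref{S4P1} and pull out the sample points $r_n=e^{t_n}$ at which $\td U-U$ has a strict alternating sign; uniform convergence on a fixed compact interval then forces the same sign pattern on the approximating difference, and the intermediate value theorem does the rest. Your route is more elementary---it avoids the ODE argument and the simple-zero lemma entirely---at the cost of depending on the internal structure of the proof of Proposition \ref{S4P1} rather than just its statement. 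Both are perfectly valid; the paper's version is more self-contained at the level of lemma statements, while yours is shorter and needs less machinery.
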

\begin{proof}
Let $U$ and $\td U$ be the regular and singular solutions to the equation in \rf{S4E1} determined above.
Define $V =  U - \td U$. Then, $V$ satisfies the ODE
\aaa{
\lb{S4L3E2}
kr^{n-k}({\td U}')^{k-1} V'' + \big(k r^{n-k} U' \Phi_1
+(n-k) r^{n-k-1}\Phi_2\big)V' 
- r^{n+l_0-1}c_{n,k}^{-1}\td\la  K(0) \Phi_3 V = 0,
}
where $\Phi_1$, $\Phi_2$, and $\Phi_3$ are continuous {functions in $r$}.
Since $\td U'\neq 0$, the ODE \rf{S4L3E2} is of second order.
 We claim that 
 \aaa{
 \lb{prop1111}
 V'(r_0) \ne 0  \; \text{  whenever  } \; V(r_0) = 0.
 }
The proof of this claim is exposed in the beginning of the proof of Theorem
4.3 in  \cite{OSS22}. In this proof, one has to take  $f(x) = q\ln(-x)$,
$u = - U$ and $u^* = - \td U$.

By the property \rf{prop1111}, each zero of 
$V$ is simple, and furthermore,
the set of zeros of $V$ has no accumulation points. Therefore,
each zero is isolated.
By Proposition \ref{S4P1}, for every  $N\in \Nnu$, sufficiently large, 
there exists a number $M>0$ such that 
$\mc Z_{[0,M]}(\td U - U) \gt N+1$.
By Lemmas~\ref{S4L1} and \ref{S4L2}, as $a\to\infty$,
\aa{
\mc F_{a}\td w \to\td U  \quad \text{and} \quad \mc F_{a}w\to U
\ \ \textrm{in}\ \ \C_{loc}(0,\infty),
}
Therefore, in a neighborhood of each zero of $\td U -U$, there exists at least one zero of $\mc F_{a}\td w -\mc F_{a}w$ when $a$ is sufficiently large.
Hence,
\aa{
\mc Z_{[0,{M}]}\big(\mc F_a\td w -(\mc F_a w)(\,\cdot\,,a)\big)\gt N.
}
Since $\mc Z_{[0,M]}\big(\mc F_a\td w -(\mc F_{a}w)(\fdot,a)\big)=\mc Z_{[0,a^{-\gm}M]}\big(\td w -w(\fdot,a)\big)$,
where $\gm=(q-k)/(2k+l_0)$,
we obtain
\aa{
\mc Z_{[0,a^{-\gm}M]}[\td w -w(\fdot,a)]\gt N.
}
When $a>0$ is large, $[0,a^{-\gm}M]\sub [0,1]$, and hence,
\aa{
\mc Z_{[0,1]}[\td w -w(\fdot,a)]\gt\mc Z_{[0,a^{-\gm}M]}[\td w -w(\fdot,a)]\gt N.
}
The lemma is proved.
\end{proof}


\begin{proof}[Proof of Theorem \ref{S4L4}]
Let $\td \la$ be defined by \rf{tdla1111} and
let $w(r,a)$ be the solution to \rf{IVPw} with $\la=\td \la$
and $w_0 = -a$, whose existence and uniqueness
is known due to Proposition \ref{prop35}. 
It is straighforward to verify that $\bar w(r,a):=({\td\la }/{\la})^{1/(q-k)}w(r,a)$ 
is a solution to
\eqq{
(r^{n-k}(\bar w')^k)'=r^{n-1}c_{n,k}^{-1}\la\rho(r)(-\bar w)^q, \quad r>0,\\
\bar w(0,a)=-\left(\frac{\td\la }{\la}\right)^{1/(q-k)}\!a,
\qquad
\bar w_r(0,a)=0.
}
The reminder of the proof follows the lines of the proof of Theorem 2.2 in \cite{MiSV19}, 
so we omit it. 
\end{proof}

\section{$P_2$\,-, $P_3^+$-, and $P_4^+$-solutions to Problem \rf{P2P3}}\lb{tos}
\lb{secP2P3}
 In this section, we characterize {$P_2$\,-, $P_3^+$-, and $P_4^+$-solutions} to Problem \rf{P2P3}.
 Since we are only interested in the behavior of these solutions at a neighborhood 
 of $+\infty$, we will define these solutions on an interval $(M,+\infty)$
 for some $M>0$ large enough.
 \begin{definition}
A $\C^2(M,+\infty)$-solution $w(r)$ to Problem \rf{P2P3}
is called a $P_2$\,-, $P^+_3$-, or $P^+_4$-solution if the associated orbit
$(x(t),y(t))$, $t=\ln(r)$,  of the non-autonomous Lotka-\!Volterra system \rf{LVSrho} 
tends to $(0,\frac{n-2k}{k})$,  $(n+l_\infty,0)$, 
or $\big(\frac{q(n-2k)- k(n+l_\infty)}{q-k}, \frac{2k+l_\infty}{q-k}\big)$,
respectively, as $t\to+\infty$. 
\end{definition}
The main tool for {characterizing} these solutions is the associated 
non-autonomous Lotka-\!Volterra 
system \rf{LVSrho},  which it will be convenient to rewrite with the help
of the function $\zeta(t) = \nu(t) -  n-l_{\infty}$ in one of the following ways: 
\aaa{
\lb{lv1111}
\begin{cases}
\dot x = x(\nu_+ + \zeta(t) - x - qy),\\
\dot y = y(-\frac{n-2k}k +\frac{x}{k} + y);
\end{cases}
\qquad 
\begin{cases}
\dot x = x(\nu_+ + \zeta(t) - x - qy),\\
\dot y = y(-\dl +\frac{x-\nu_+}{k} + y).
\end{cases}
}
Note that $\lim_{t\to+\infty} \zeta(t) = 0$.

As we mentioned in {the Introduction}, an important role will played by the parameter
$\dl$, given by \rf{dl1111}. More specifically, we will show that $P_2$-solutions  
to \rf{P2P3} can exist
for all values of $\dl$; $P_3^+$-solutions can exist only if $\dl\gt 0$, and
$P_4^+$-solutions can exist only if $\dl < 0$.

\subsection{Classification of the stationary points of \rf{limit1111}}
\lb{lineariz1111}

In this subsection, we assume that $(\rho.4)$ is in force. Consider the autonomous system
 \eq{
 \lb{limit1111}
 \tag{$LVS_{q,\nu_{+}}$}
\dot x = x(n+l_\infty - x - qy),\\
\dot y = y(-\frac{n-2k}k +\frac{x}{k} + y)
}
whose right-hand side is the limit  of  \rf{LVSrho} as $t\to+\infty$. 
Let $P=(a,b)$ be a stationary point of \rf{limit1111}. Introduce the coordinates 
$\bar x=x-a$ and $\bar y =y-b$. Then, one can rewrite \rf{limit1111} as follows:
\begin{equation*}
\begin{pmatrix}
\frac{d \bar x}{dt}\\
\\
\frac{d \bar y}{dt}
\end{pmatrix}
= A \begin{pmatrix}
 \bar x\\
\\
\bar y
\end{pmatrix}
+ \begin{pmatrix}
-\bar x^2 -q\bar x\bar y\\
\\
\frac{\bar x\bar y}{k} + \bar y^2
\end{pmatrix}
\end{equation*}
with
\aa{
\lb{matrixA}
A:=
\begin{pmatrix}
n+l_\infty -2a -qb & -qa\\
\\
\frac{b}{k} & \frac{a}{k} + 2b - \frac{n-2k}{k}
\end{pmatrix}.
}
The critical points of $(LVS_{q,\nu_+})$ are $P_1(0,0), P_2(0,\frac{n-2k}{k}), P_3^+(n+l_\infty,0)$, and
\begin{equation}\lb{interiorcritipoint:plus}
P_4^+\left(\frac{q(n-2k)- k(n+l_\infty)}{q-k}, \frac{2k+l_\infty}{q-k}\right)
 = P_4^+(\td{x},\td{y}).
\end{equation}
Below we classify $P_1$, $P_2$, $P_3^+$ and $P_4^+$.
First, we note that the inequality $q\gt q^*(k,l_0)$ is equivalent to 
$\frac{n+l_0}{n-2k} \lt \frac{q+1}{k+1}$ and 
{$q > q^*(k,l_0)$  is equivalent to $\frac{n+l_0}{n-2k} < \frac{q+1}{k+1}$.}
By $(\rho.4)$, 
\aaa{
\lb{inq*1111}
\frac{n+l_\infty}{n-2k} < \frac{q+1}{k+1} < \frac{q}{k}.
}
\textit{Case $n+l_\infty>0$.}
It is straighforward to verify that $P_1$ and $P_2$ are saddle points. 
We will classify $P_3^+$ depending on the parameter $\dl$ defined by \rf{dl1111}.
If $\dl>0$, then $P_3^+$ is a node, if $\dl<0$, then $P_3^+$
is a saddle.  Furthermore, if $\dl = 0$, then $P_3^+ = P_4^+ = (n-2k,0)$.
In the latter case, the eigenvalues
of the matrix $A$ with $(a,b) = (n-2k,0)$ are $\la_1=0$, $\la_2 = -(n-2k)$.
According to the results of \cite{Cao} (Section 5, item (B-3) on p. 811),
the point $P_3^+ = P_4^+ = (n-2k,0)$ classifies as a saddle-node. 

Consider now $P_4^+(\td x,\td y)$, where $(\td x,\td y)$ is 
defined by \rf{interiorcritipoint:plus}.
 A {straightforward} computation shows that at $P_4^+$
 \aaa{
 \lb{a4444}
 A =\begin{pmatrix}
 -\td x   & -q\td x\\
\\
\frac{\td y}{k} & \td y
\end{pmatrix}.
 } 
If $\dl>0$ ($l_\infty<-2k$), then $\td x>0$ (by \rf{inq*1111}) and $\td y<0$, so it
 is {straightforward} to compute that
 for the eigenvalues of $A$ is holds that $\la_1>0$ and $\la_2<0$. Therefore,
 $P_4^+$ is a saddle.
 If $\dl = 0$, then $P_4^+=P_3^+$; this case was analyzed above. 
 Finally, if $\dl<0$, then $\td x>0$ and $\td y>0$. 
 Further, by \rf{inq*1111},
 $\td x - \td y > 0$. The characteristic equation takes the form
 $\la^2 + (\td x - \td y)\la + (\frac{q}{k} - 1) \td x\td y = 0$ with the discriminant
 \aa{
 D = (\td x - \td y)^2 - 4\Big(\frac{q}{k} -1\Big) \td x\td y = \td y^2\Big(\mu^2
  + 2\mu \Big(1-\frac{2q}{k}\Big) + 1\Big), \qquad \mu = \frac{\td x}{\td y}.
  }
Note that the  quadratic polynomial (with respect to $\mu$) on the right-hand side
can take negative, zero or positive values. Indeed, it has two positive roots. 
 Therefore,  $P_4^+$ can be either a focus or a node. 
 Let us find the condition on $l_\infty$ implying that $P_4^+$ is a node.
 {The roots of the polynomial are 
  $\mu_{1,2} = \frac{2q}{k} - 1\mp 2\sqrt{(\frac{q}{k})^2 - \frac{q}{k}}$ 
($\mu_1<\mu_2$).
Then, the condition of the positivity of $D$ is $\frac{\td x}{\td y}>\mu_2$ or
 $\frac{\td x}{\td y}<\mu_1$. Equivalently,
 it can be written as follows:
 \aa{
 l_\infty < \frac{q(n-2k) - k(n+2\mu_2)}{k+\mu_2}
 \quad \text{or} \quad 
 l_\infty > \frac{q(n-2k) - k(n+2\mu_1)}{k+\mu_1}.
 }
 Under this condition, we have that
$\la_1<0$, $\la_2<0$, and $\la_1\ne \la_2$, so
$P_4^+$ is a stable node.
Note that if $D=0$, then $P_4^+$ is a stable degenerate node. 
 }
\begin{remark}
The aforementioned condition on $l_\infty$ is equivalent to $q>q_{JL}(k,l_\infty)$. 
See Section 6 in \cite{SaVe17}, p. 702.
\end{remark}

\textit{Case $n+l_\infty<0$.} 
$P_1$ is now a node and $P_2$ remains a saddle. 
Note that if $n+l_\infty < 0$, then $\dl = -\frac{2k+l_\infty}{k}\gt \frac{n-2k}{k}>0$. 
Therefore, $P_3^+$ is a saddle. Furthermore, the analysis of the previous case
shows that $P_4^+$ is a saddle.

\textit{Case $n+l_\infty = 0$.} In this case,  $P_2$ and $P_4^+$ remain saddle
points. Note that $P_3^+ = P_1$. According to the classification in \cite{Cao},
Section 5, item (B-4-b-1-j) on p. 812, $P_3^+ = P_1$ is a saddle-node.

Remark that the {classification} of $P_1$, $P_3^+$ and $P_4^+$ depends on how
the  number $l_\infty$ is positioned with respect to $-n$ and $-2k$. 
On the other hand, we will only be interested in stationary points located
in the first quadrant since the orbits of \rf{LVSrho} are located in this quadrant
due to the transformation \rf{newtrans0}.
For the reader's convenience, we summarize the results of this subsection
in the following table:
\begin{table}[H]
\centering
\begin{tabularx}{\textwidth}{| X | X | X | X | X| X|} 
\hline
 \small $l_\infty < -n$  & \small $l_\infty = -n$ & 
 \small $-n<l_\infty < -2k$ & \small $l_\infty = -2k$ & \small $l_\infty > -2k$ \\
\hline
 \small  $P_2$ is a saddle, \phantom{sss}  $P_3^+$ is a saddle but $P_3^ +\notin\{x\gt 0\}$, 
  \phantom{sss} 
   $P_4^+$ is a saddle but 
$P_4^+\notin \{y\gt 0\}$,  \phantom{sss}
$P_1$ is a node
 & 
\small   $P_2$ is a saddle, \phantom{sss}
$P_4^+$ is a saddle but 
$P_4^+\notin \{y\gt 0\}$, \phantom{sss} 
$P_1 = P_3^+$ is a  \phantom{ssss} saddle-node
&
 \small  $P_1,P_2$ are saddles,
 $P_4^+$ is a saddle but 
$P_4^+\notin \{y\gt 0\}$, \phantom{sss} 
 $P_3^+$ is a node 
 &
\small   $P_1,P_2$ are saddles,
 $P_4^+ = P_3^+$ is a  \phantom{ssss} saddle-node
&
\small   $P_1, P_2,P_3^+$ are \phantom{sss} saddles, \phantom{sssssssss} 
$P_4^+$\hspace{-0.5mm} is either  a node or a fo\-cus
\\  \hline
\end{tabularx}
\caption{Classification of the stationary points of \rf{limit1111} depending on $l_\infty$.}
\lb{table:table1}
\end{table}

 \subsection{Useful results}
\lb{useful}
The following lemma is a slight reformulation of Lemma 5.5 from \cite{BattLi}.
\begin{lemma}
\lb{lem1111}
Consider the differential equation $\dot z = r(t) z + b(t)$ on $[t_0, +\infty)$. Assume
$lim_{t\to+\infty} r(t) = r_0<0$ and $\lim_{t\to+\infty} b(t) = b_0$, $b_0\in\Rnu$. Then,
\aa{
\lim_{t\to+\infty} z(t) = -\frac{b_0}{r_0}.
}
\end{lemma}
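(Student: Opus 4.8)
\textbf{Proof plan for Lemma \ref{lem1111}.}

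The plan is to reduce the statement to the standard fact that a linear scalar ODE with asymptotically constant coefficients converges, by comparison with the constant-coefficient equation whose limit is $-b_0/r_0$. First I would normalize: set $\zeta = -b_0/r_0$, which is the unique equilibrium of the limiting autonomous equation $\dot z = r_0 z + b_0$, and introduce $u(t) = z(t) - \zeta$. Then $u$ satisfies $\dot u = r(t) u + g(t)$ where $g(t) = b(t) + r(t)\zeta - b_0 = (b(t) - b_0) + (r(t) - r_0)\zeta \to 0$ as $t\to+\infty$. So it suffices to prove the following: if $r(t)\to r_0 < 0$ and $g(t)\to 0$, then the solution $u$ of $\dot u = r(t)u + g(t)$ satisfies $u(t)\to 0$.

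For this reduced claim I would use the variation-of-constants formula. Fix $\tau \ge t_0$ and write, for $t \ge \tau$,
\aa{
u(t) = u(\tau)\,\exp\!\Big(\int_\tau^t r(s)\,ds\Big) + \int_\tau^t \exp\!\Big(\int_s^t r(\sigma)\,d\sigma\Big) g(s)\,ds.
}
Given $\eps>0$, first choose $\tau$ large enough that $r(s) \le r_0/2 < 0$ and $|g(s)| \le \eps$ for all $s \ge \tau$ (possible since $r(s)\to r_0<0$ and $g(s)\to 0$). Then $\int_s^t r(\sigma)\,d\sigma \le \tfrac{r_0}{2}(t-s)$, so the first term is bounded by $|u(\tau)|\,e^{(r_0/2)(t-\tau)} \to 0$ as $t\to+\infty$, and the integral term is bounded in absolute value by $\eps \int_\tau^t e^{(r_0/2)(t-s)}\,ds \le \eps \cdot \tfrac{2}{|r_0|}$. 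Hence $\limsup_{t\to+\infty} |u(t)| \le \tfrac{2\eps}{|r_0|}$, and since $\eps>0$ is arbitrary, $u(t)\to 0$, i.e. $z(t)\to -b_0/r_0$.

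The only point requiring mild care is ensuring the solution is defined on all of $[t_0,+\infty)$ and that the above integrals make sense, which follows from the (assumed) continuity of $r$ and $b$ on $[t_0,+\infty)$ via linear ODE theory; the boundedness of $r$ near $+\infty$ (from $r(t)\to r_0$) and the growth control in the variation-of-constants estimate preclude blow-up. No real obstacle arises here — the estimate above is essentially a Grönwall-type argument tailored to a strictly negative leading coefficient — so the main (and minor) thing to be careful about is the order of quantifiers: one must pick $\tau$ depending on $\eps$ \emph{before} letting $t\to+\infty$, exactly as done above. Since this is a reformulation of Lemma 5.5 in \cite{BattLi}, one may alternatively simply cite that source after recording the substitution $u = z + b_0/r_0$.
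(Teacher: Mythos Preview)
Your argument is correct and self-contained. The paper takes a different, shorter route: it simply applies the time-reversal $\td z(t)=z(-t)$, $\td r(t)=-r(-t)$, $\td b(t)=-b(-t)$, which converts the problem at $t\to+\infty$ into one at $t\to-\infty$ and then invokes Lemma~5.5 of \cite{BattLi} directly---no variation-of-constants estimate is written out. Your approach has the advantage of being independent of the cited source and of making the mechanism (exponential contraction from $r_0<0$) explicit; the paper's approach is more economical but relies on the reader having access to \cite{BattLi}. One small slip: in your definition of $g$ the middle expression ``$b(t)+r(t)\zeta-b_0$'' carries a spurious $-b_0$; the correct forcing term is $g(t)=b(t)+r(t)\zeta$, which indeed equals $(b(t)-b_0)+(r(t)-r_0)\zeta\to 0$ since $r_0\zeta=-b_0$. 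This does not affect the rest of your proof. Also note that the substitution you mention at the end ($u=z+b_0/r_0$) is not the one the paper uses to reduce to \cite{BattLi}; the paper's reduction is the time-reversal above.
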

\begin{proof}
The result follows immediately from Lemma 5.5 if we rewrite the ODE with respect to $\td z(t) = z(-t)$
and introduce the new coefficients $\td r(t) = -r(-t)$ and $\td b(t) = -b(-t)$.
\end{proof}
  \begin{proposition}
  \lb{prop-2k1111}
 Suppose  $l_\infty> - 2k$ (equivalently, $\dl<0$).
Then a solution to Problem \rf{P2P3} of class $\C^2(M,+\infty)$ 
cannot be a $P_3^+$-solution. 
\end{proposition}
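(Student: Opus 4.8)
The plan is to argue by contradiction: suppose $w$ is a $P_3^+$-solution, so that the associated orbit $\ffi(t)=(x(t),y(t))$ of \rf{LVSrho} satisfies $\ffi(t)\to P_3^+(n+l_\infty,0)$ as $t\to+\infty$. Since $\dl<0$ means $l_\infty>-2k$, Table \ref{table:table1} (the last column) tells us that $P_3^+$ is a saddle point of the limit autonomous system \rf{limit1111}. The key idea is that, because the non-autonomous system \rf{LVSrho} is asymptotically autonomous in the sense of Thieme \cite{Thie94} with limit system \rf{limit1111}, an orbit can converge to a hyperbolic saddle only along its stable manifold; I will show that the stable manifold of $P_3^+$ does not meet the open first quadrant $\{x>0,\ y>0\}$, where the orbit $\ffi$ is forced to live by the transformation \rf{newtrans0}. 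This contradiction finishes the proof.

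First I would compute, at the stationary point $P_3^+(n+l_\infty,0)$, the linearization of \rf{limit1111}. From the matrix $A$ with $(a,b)=(n+l_\infty,0)$ one gets
\aa{
A=\begin{pmatrix} -(n+l_\infty) & -q(n+l_\infty)\\[1mm] 0 & \frac{n+l_\infty}{k}-\frac{n-2k}{k}\end{pmatrix}
=\begin{pmatrix} -(n+l_\infty) & -q(n+l_\infty)\\[1mm] 0 & -\dl \end{pmatrix},
}
with eigenvalues $\la_1=-(n+l_\infty)<0$ (using $n+l_\infty>0$, which holds since $l_\infty>-2k>-n$) and $\la_2=-\dl>0$. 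The eigenvector for the \emph{stable} eigenvalue $\la_1$ is $(1,0)^\tp$, i.e.\ the $x$-axis direction; the eigenvector for the unstable eigenvalue $\la_2$ points into the region $y\neq 0$. Hence the local stable manifold of $P_3^+$ is tangent to $\{y=0\}$. Next I would upgrade this to a global statement: since $\{y=0\}$ is itself invariant for \rf{limit1111} (because $\dot y=y(\cdots)$), the stable manifold of $P_3^+$ \emph{is} the segment of the $x$-axis, which is disjoint from the open first quadrant. Then, invoking the asymptotically-autonomous theory (Thieme's convergence/attractivity results, as used already for the $\om$-limit characterization in Subsection \ref{useful}), an orbit $\ffi(t)$ of \rf{LVSrho} staying in $\{x>0,y>0\}$ cannot have $\om$-limit set $\{P_3^+\}$: convergence to a hyperbolic saddle of the limit system requires the orbit to be asymptotic to the saddle's stable manifold, forcing $y(t_n)=0$ along a sequence, contradicting $y(t)>0$.

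The main obstacle I anticipate is making the last step rigorous: ``an asymptotically autonomous orbit converging to a hyperbolic saddle must approach along the stable manifold'' is true but needs care when the limit point sits on the boundary of the region of interest and one of the eigendirections is tangent to an invariant line. Concretely, I would control the sign of $\dot y$ near $P_3^+$: using the second equation in the form $\dot y = y(-\dl + \frac{x-\nu_+}{k}+y)$ (see \rf{lv1111}), once $\ffi(t)$ enters a small neighborhood of $P_3^+$ the bracket is close to $-\dl>0$, so $\dot y \ge \tfrac{|\dl|}{2}\,y >0$ for all large $t$; then $y(t)$ is eventually increasing and bounded below by $y(t_1)e^{(|\dl|/2)(t-t_1)}\to+\infty$, contradicting $y(t)\to 0$. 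This elementary differential-inequality argument bypasses the need to quote an abstract stable-manifold theorem for non-autonomous perturbations, so I would present the proof in that form. If $n+l_\infty\le 0$ were possible one would also have to treat $P_3^+\notin\{x>0\}$ separately, but under $l_\infty>-2k$ (with $n>2k$) we always have $n+l_\infty>n-2k>0$, so no further case distinction is needed.
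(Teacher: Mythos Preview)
Your decisive argument---the differential inequality $\dot y \gt \tfrac{|\dl|}{2}\,y$ for large $t$, obtained from $\dot y = y(-\dl + \tfrac{x-\nu_+}{k}+y)$ and the limits $x\to\nu_+$, $y\to 0$---is correct and is essentially the paper's approach: both extract a contradiction from the $y$-equation alone using the sign of $\dl$. The paper's execution is a minor variant: it rewrites the equation as $\big(\tfrac1y\big)' = \tfrac1y\big(\dl + \tfrac{\nu_+-x}{k}\big) - 1$ and applies Lemma~\ref{lem1111} to conclude $\lim_{t\to+\infty}\tfrac1{y(t)} = \tfrac1\dl<0$, contradicting $y>0$; your Gronwall-style bound is an equally clean (arguably more direct) route to the same contradiction, and the preliminary stable-manifold discussion, while correct in spirit, is unnecessary once you have it.
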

\begin{proof}
Suppose 
$(x(t),y(t))$ is the orbit  of \rf{LVSrho} associated to a $P_3^+$-solution.
Computing the derivative of $\frac1y$ and using \rf{LVSrho}, we obtain
\aaa{
\lb{1111y}
\Big(\frac1y\Big)'  = -\frac1y \frac{\dot y}{y} 
= \frac1y\Big(\dl +\frac{\nu_+-x}k\Big) - 1.
}
Since $x\to \nu_+$ as $t\to+\infty$,  by Lemma \ref{lem1111}, 
$\lim_{t\to+\infty} \frac1{y(t)} = \frac1{\dl}<0$.
This contradicts to the fact that $\lim_{t\to+\infty} y(t) = 0$.
\end{proof}
\begin{lemma}
\lb{lemp2222}
Let $(\rho. 1)$, $(\rho. 2)$, $(\rho. 4)$, $(\rho. 5)$ hold.
Further let $(x,y)$ be  either an orbit of \rf{LVSrho} or an orbit of \rf{limit1111} 
tending to $P_2$ as $t\to +\infty$ in such a way that $x(t)>0$
 in a neighborhood of $+\infty$.  Then, 
in a neighborhood of $P_2$, $y$ can be represented as a function of $x$,
which we denote by $\hat y(x)$. If, moreover, $\hat y'(0 )$ exists,
then, in a neighborhood of $P_2$, the graph of $\hat y(x)$ lies in $G_+$ and
\aa{
\hat y'(0) = -\frac{n-2k}{k^2\gm + k(n-2k)}.
}
\end{lemma}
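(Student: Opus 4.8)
The plan is to analyze the orbit near $P_2=(0,\frac{n-2k}{k})$ by working with the perturbation equations. First I would set $\bar x = x$ and $\bar y = y - \frac{n-2k}{k}$ and write down the linearization of \rf{LVSrho} (or \rf{limit1111}) at $P_2$. The matrix $A$ at $P_2$ is lower/upper triangular with eigenvalues $\gm = \frac{q}{k}(n-2k) - (n+l_\infty)$ along the $x$-direction (negative of it, actually, since the orbit converges, so the relevant eigenvalue is $-\gm<0$) and $-(n-2k)$ along the $y$-direction. Since $x(t)>0$ near $+\infty$ and $x\to 0$, the $x$-component is eventually strictly monotone decreasing (from the sign of $\dot x / x = \nu(t) - x - qy$, which tends to $\nu_+ - q\frac{n-2k}{k} = -\gm<0$), so $t\mapsto x(t)$ is invertible on a neighborhood of $+\infty$, and one can legitimately write $y = \hat y(x)$ for $x$ small.

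Next, assuming $\hat y'(0)$ exists, I would compute its value by dividing the two equations of the system: along the orbit,
\aa{
\frac{dy}{dx} = \frac{\dot y}{\dot x} = \frac{y\big(-\frac{n-2k}{k} + \frac{x}{k} + y\big)}{x\big(\nu(t) - x - qy\big)}.
}
Writing $y = \frac{n-2k}{k} + \eta$ and linearizing in $x,\eta$: the numerator is $\frac{n-2k}{k}\big(\frac{x}{k} + \eta\big) + O(|x|^2 + |\eta|^2 + |x\eta|)$ and the denominator is $x\big(\nu_+ - q\frac{n-2k}{k}\big)(1+o(1)) = -\gm x(1+o(1))$ (using $\zeta(t)\to 0$, so the non-autonomous term is negligible). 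Hence
\aa{
\hat y'(0) = \lim_{x\to 0}\frac{\eta}{x} \cdot \text{(consistency)},
}
and more precisely, setting $m = \hat y'(0)$ and using $\eta = m x(1+o(1))$, the relation $\frac{d\eta}{dx} = m$ at $x=0$ forces
\aa{
m = \frac{\frac{n-2k}{k}\big(\frac1k + m\big)}{-\gm},
}
which solves to $m\big(-\gm - \frac{n-2k}{k}\big) = \frac{n-2k}{k^2}$, i.e. $m = -\dfrac{n-2k}{k^2\gm + k(n-2k)}$, matching \rf{haty2222}. I would need to justify the step $\zeta(t)\to 0$ in the denominator, which follows from $(\rho.4)$ (giving $l_\infty = \lim R$, so $\nu(t)\to n + l_\infty = \nu_+$); assumption $(\rho.5)$ is not strictly needed for this computation but is carried along for consistency with the rest of the section.

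For the claim that the graph of $\hat y$ lies in $G_+$ near $P_2$: since $\hat y'(0) = m<0$, near $x=0$ we have $y = \frac{n-2k}{k} + m x + o(x)$. I would plug this into $G(x,y) = x + \frac{(n-2k)(q+1)}{k+1}\big(\frac{k}{n-2k}y - 1\big)$, obtaining $G(x,\hat y(x)) = x + \frac{q+1}{k+1}\big(k\,\hat y(x) - (n-2k)\big) = x + \frac{q+1}{k+1} k m x + o(x) = x\big(1 + \frac{(q+1)km}{k+1}\big) + o(x)$. Then I would check that the bracket $1 + \frac{(q+1)km}{k+1}$ is strictly positive — equivalently $\frac{(q+1)k|m|}{k+1} < 1$, i.e. $(q+1)k \cdot \frac{n-2k}{k^2\gm + k(n-2k)} < k+1$, which unwinds to an inequality on $q$ that is a consequence of $q > q^*(k,l_\infty)$; this is exactly where $(\rho.4)$ (via \rf{inq*1111}) enters. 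Since $G$ is positive for small $x>0$ along the graph and $P_2\in G_0$, the graph enters $G_+$.

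The main obstacle I anticipate is the rigor of the dimension-reduction step — justifying that near $P_2$ the orbit is genuinely a graph $y=\hat y(x)$ with the stated differentiability, and handling the non-autonomous perturbation $\zeta(t)$ correctly. The clean way is to invoke that $x(t)$ is eventually strictly decreasing (so $t=t(x)$ is a well-defined $\C^1$ function of $x$), set $\hat y(x) = y(t(x))$, and note the hypothesis of the lemma already grants existence of $\hat y'(0)$, so the only real work is the algebraic limit computation above plus the sign-of-$G$ estimate; both reduce to the standing assumptions $(\rho.2)$ and $(\rho.4)$. The remaining care is to make sure the $o(1)$ terms coming from $\zeta(t)$ and from the quadratic nonlinearities are uniformly controlled as $x\to 0$, which follows since $\zeta(t(x))\to 0$ and $x, \eta \to 0$ simultaneously along the orbit.
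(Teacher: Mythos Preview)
Your proposal is correct and follows essentially the same approach as the paper's proof: both argue that $\dot x/x \to -\gm < 0$ forces $x(t)$ to be eventually strictly monotone (so $t=t(x)$ and $\hat y(x)=y(t(x))$ are well-defined), both derive the value of $\hat y'(0)$ from the same fixed-point relation $m = \frac{n-2k}{k}\big(\tfrac1k+m\big)/(-\gm)$ obtained by computing $\dot y/\dot x$ and using $\eta = mx(1+o(1))$, and both verify the $G_+$ claim by comparing $|\hat y'(0)|$ with the slope $\frac{k+1}{k(q+1)}$ of $G_0$ (you do it by expanding $G(x,\hat y(x))$ to first order, the paper phrases it as a slope comparison; these are the same computation). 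One small inaccuracy in your preamble: the eigenvalue in the $\bar y$-direction at $P_2$ is $\frac{n-2k}{k}>0$, not $-(n-2k)$, but since you never actually use the linearization matrix this does not affect the argument.
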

\begin{proof}
Note that  by \rf{inq*1111},
$\lim_{t\to+\infty} (\nu_+ + \zeta(t) - x - qy) = n+l_\infty - \frac{q}{k}(n-2k)<0$.	
Therefore, in a neighborhood of $+\infty$, $\dot x>0$ 
and one can express $t$ as a function of $x$.   In this 
neighborhood of $+\infty$,  we introduce the function
\aa{
\hat y(x) = y(t(x)),
}
where $t(x)$ is the inverse function to $x(t)$. Next, since $\hat y'(0)$ exists,
from \rf{LVSrho} we obtain
\aa{
\hat y'(0) = \lim_{x\to 0+} \hat y'(x)  = \lim_{t\to+\infty}\frac{\dot y(t)}{\dot x(t)} = \frac{\frac{n-2k}{k}\big(\frac1{k} +
 \lim_{x\to 0+}\frac{\hat y - \frac{n-2k}{k}}{x}\big)}{n+l_\infty - \frac{q}{k}(n-2k)} 
 = - \frac{(n-2k)(1+k \hat y'(0))}{k^2\gm}, 
}
where $\gm = \frac{q}{k}(n-2k) - (n+l_\infty)>0$ (by \rf{inq*1111}).
From the above equation, we obtain
the expression for $\hat y'(0)$.
Using this expression, we compare $\hat y'(0)$ with the slope of $G_0$ which equals
$-\frac{k+1}{k(q+1)}$.  Observe that $\frac1{|\hat y'(0)|} > \frac{k(q+1)}{k+1}$.
Indeed, the latter inequality is {equivalent} to $\frac{n+l_\infty}{n-2k} < \frac{q+1}{k+1}$,
which is the same as \rf{inq*1111}. This implies that in a neighborhood of $P_2$,
the graph of $\hat y(x)$ lies in $G_+$.
\end{proof}

The following theorem by Thieme will be useful.
Although the theorem holds for a large class
of {asymptotically} autonomous 2-dimensional systems (see Theorem 1.5 in \cite{Thie94}), 
we will formulate it only for systems \rf{LVSrho} and \rf{limit1111}. We take into account
that we have at most four equilibrium points, so the assumptions of Thieme's theorem
are fulfilled.
\begin{theorem}
\lb{tie1111}
For the  $\om$-limit set $\om$ of \rf{LVSrho}, the following trichotomy holds:
\bi
\item[\rm (a)] $\om$ consists of an equilibrium point of \rf{limit1111}.
\item[\rm (b)] $\om$ is the union of periodic orbits of \rf{limit1111} and possibly centers 
of \rf{limit1111} surrounded by periodic orbits lying in $\om$.
\item[\rm (c)] $\om$ contains equilibria of \rf{limit1111} that are {cyclically} chained
to each other by orbits of \rf{limit1111}.
\ei
\end{theorem}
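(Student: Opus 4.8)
The plan is to obtain Theorem~\ref{tie1111} as a direct specialization of Thieme's Theorem~1.5 in \cite{Thie94}; accordingly the task is not to reprove a Poincar\'e--Bendixson-type result from scratch, but to check that the pair consisting of the non-autonomous system \rf{LVSrho} and the autonomous system \rf{limit1111} lies within the scope of that result.

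First I would verify that \rf{LVSrho} is asymptotically autonomous with limit equation \rf{limit1111}. The right-hand side of \rf{LVSrho} depends on $t$ only through the coefficient $\nu(t)=n+R(e^t)$, and under $(\rho.4)$ the limit $l_\infty=\lim_{r\to+\infty}R(r)$ exists, so $\nu(t)\to n+l_\infty=\nu_+$ as $t\to+\infty$. Since both vector fields are polynomials in $(x,y)$ whose coefficients agree except for the $\nu$-term, the convergence of the right-hand side of \rf{LVSrho} to that of \rf{limit1111} is uniform on every compact subset of $\Rnu^2$, which is precisely the hypothesis of asymptotic autonomy required in \cite{Thie94}.

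Next I would record the structural facts about the limit system that make the trichotomy applicable. The equilibria of \rf{limit1111} are exactly $P_1$, $P_2$, $P_3^+$ and $P_4^+$ listed in \rf{interiorcritipoint:plus} (possibly with coincidences, e.g.\ $P_3^+=P_4^+$ when $\dl=0$), so there are at most four of them and, in particular, no continuum of equilibria; this is the single nontrivial admissibility condition in Thieme's theorem. Combined with the observation that the orbits of \rf{LVSrho} under consideration arise from the change of variables \rf{newtrans0}, hence lie in $\overline{\Rnu^2_+}$, and are bounded on their maximal forward interval, so that their $\om$-limit set $\om$ is nonempty, compact and connected, the assumptions of Theorem~1.5 of \cite{Thie94} are met and its conclusions (a)--(c) transfer verbatim to \rf{LVSrho} with limit system \rf{limit1111}.

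The point requiring care is not any lengthy computation but the bookkeeping on admissibility: one must ensure that the forward orbit in question stays bounded so that $\om\neq\emp$ and the trichotomy is not vacuous, and one must keep in mind that for the \emph{statement} of the theorem the intermediate case (b) genuinely has to be allowed, even though the subsequent phase-plane analysis of our quadratic Lotka--Volterra limit will rule it out in each concrete situation. A fully self-contained argument bypassing \cite{Thie94} would amount to redeveloping the asymptotically autonomous Poincar\'e--Bendixson machinery --- chain transitivity of $\om$, exclusion of asymptotic homoclinic loops --- which is exactly the content of \cite{Thie94} and which we therefore invoke rather than reprove.
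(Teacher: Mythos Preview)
Your proposal is correct and matches the paper's approach: the paper does not prove this theorem independently but simply cites Theorem~1.5 of \cite{Thie94} and remarks that ``we have at most four equilibrium points, so the assumptions of Thieme's theorem are fulfilled.'' Your write-up is in fact more detailed than the paper's, since you explicitly verify asymptotic autonomy via $\nu(t)\to\nu_+$ and note the role of forward boundedness, whereas the paper leaves these checks implicit.
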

The following corollary of Theorem \ref{tie1111} will be useful.
\begin{corollary}
\lb{pro1111}
Assume $(\rho.1)$, $(\rho.2)$, $(\rho.4)$, $(\rho.5)$. Then,
for orbits of \rf{LVSrho} associated to solutions of \rf{P2P3}, only 
option (a) of Theorem \ref{tie1111} can be realized. Furthermore, if $l_\infty{\lt} -2k$, 
the $\om$-limit set of \rf{LVSrho} is either 
$P_2$ or $P_3^+$; if $l_\infty>-2k$, 
the $\om$-limit set of \rf{LVSrho} is either $P_2$ or $P_4^+$.
\end{corollary}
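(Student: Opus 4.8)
The plan is to deduce Corollary \ref{pro1111} from Thieme's trichotomy (Theorem \ref{tie1111}) together with the classification of stationary points of \rf{limit1111} summarized in Table \ref{table:table1}. First I would rule out options (b) and (c). For option (b), I would argue that the limit system \rf{limit1111} has no periodic orbits in the open first quadrant: since \rf{limit1111} is a Lotka--Volterra system, one can exhibit a Dulac function of the form $x^{\alpha}y^{\beta}$ (or use the divergence test after dividing the vector field by $xy$) to show that the divergence of the rescaled field has a fixed sign in $\Rnu_+^2$, which excludes periodic orbits and hence also the centers-surrounded-by-periodic-orbits scenario. Concretely, writing the system as $\dot x = xS$, $\dot y = yW$ and dividing by $xy$, the divergence of $(S/y, W/x)$ is computed to be of one sign, so Bendixson--Dulac applies. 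This disposes of (b).

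Next I would eliminate option (c). A cyclic chain of equilibria joined by heteroclinic orbits of \rf{limit1111} requires at least two equilibria in the closure of the first quadrant each of saddle type with connecting orbits forming a cycle. Here I would invoke Table \ref{table:table1}: depending on the position of $l_\infty$ relative to $-n$ and $-2k$, the only equilibria lying in $\{x\gt 0, y\gt 0\}$ (or on its boundary and relevant to orbits of \rf{LVSrho}, which stay in the open first quadrant by \rf{newtrans0}) are among $P_1, P_2, P_3^+, P_4^+$, and at most one of $P_3^+, P_4^+$ is an interior attractor-type point while the others on the axes are saddles whose stable/unstable manifolds along the axes cannot close up into a cycle inside $\overline{\Rnu_+^2}$ — the coordinate axes are invariant, so any heteroclinic between boundary saddles runs along an axis, and the interior point is either a node/focus (no homoclinic/heteroclinic loop through it) or a saddle whose separatrices escape. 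One must check each of the five columns of Table \ref{table:table1} and observe that in no case can a cyclic chain of saddles be formed; in the saddle-node cases ($l_\infty = -2k$ or $l_\infty=-n$) the degenerate point still cannot participate in a genuine cycle. Hence only (a) survives, so $\om$ is a single equilibrium of \rf{limit1111}.

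Finally I would identify which equilibrium. Since the orbit of \rf{LVSrho} associated to a solution of \rf{P2P3} lies in the open first quadrant for all large $t$ (by the transformation \rf{newtrans0}, as $w<0$ and $w'$ has a fixed sign), the $\om$-limit equilibrium must lie in $\overline{\Rnu_+^2}$. When $l_\infty \lt -2k$ ($\dl\gt 0$), $P_4^+$ has $\td y \lt 0$ and so is not admissible, $P_1$ is a saddle (or coincides with $P_3^+$ as a saddle-node) whose only approaching orbits lie on the invariant axes and therefore cannot be the $\om$-limit of an interior orbit, leaving $P_2$ or $P_3^+$. When $l_\infty \gt -2k$ ($\dl \lt 0$), $P_3^+$ is a saddle and, moreover, by Proposition \ref{prop-2k1111} no solution of \rf{P2P3} can be a $P_3^+$-solution, while $P_1$ is again excluded as above; this leaves $P_2$ or $P_4^+$. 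The main obstacle I anticipate is the careful case-by-case exclusion of option (c): one has to rule out every possible heteroclinic cycle among the (possibly degenerate) boundary equilibria across all configurations of $l_\infty$, and in the saddle-node cases this requires a little extra care about which center-manifold directions point into the first quadrant. The Dulac argument for (b) and the identification step are comparatively routine once the stationary-point classification in Table \ref{table:table1} and Proposition \ref{prop-2k1111} are in hand.
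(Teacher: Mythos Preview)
Your Dulac/Bendixson argument for excluding option~(b) is correct and amounts to the same conclusion the paper draws by citing Cao's classification; either route works.

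The genuine gap is in your exclusion of option~(c) when $l_\infty>-2k$. Your claim that ``any heteroclinic between boundary saddles runs along an axis'' is not justified: invariance of the axes prevents interior orbits from \emph{crossing} them, but it does not prevent an interior heteroclinic from connecting two boundary saddles. Concretely, when $l_\infty>-2k$ the saddles in $\overline{\Rnu_+^2}$ are $P_1,P_2,P_3^+$, and the unstable manifold of $P_3^+$ has a branch pointing into the open first quadrant (the eigenvalue $(2k+l_\infty)/k$ is positive), while the stable manifold of $P_2$ likewise enters the interior. Nothing you have said rules out an interior connection $P_3^+\to P_2$, which together with the axial segments $P_2\to P_1$ and $P_1\to P_3^+$ would form a genuine heteroclinic cycle.

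The paper deals with this by a different mechanism: it does not try to show the interior heteroclinic $\psi:P_3^+\to P_2$ fails to exist, but shows it cannot belong to the $\omega$-limit set of an orbit $\varphi$ of \rf{LVSrho} coming from a solution of \rf{P2P3}. The argument uses the invariant half-planes $G_\pm$ (Lemma~\ref{inward}): if the $\omega$-limit set contains the axial legs of the cycle, then $\varphi$ must eventually lie in $G_-$, so the entire $\omega$-limit set is contained in $\overline{G_-}$. But Lemma~\ref{lemp2222} (whose proof uses the explicit slope of the stable manifold at $P_2$ and the inequality $\frac{n+l_\infty}{n-2k}<\frac{q+1}{k+1}$) forces any orbit of the limit system approaching $P_2$ from the interior to do so from $G_+$. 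This contradiction is what actually eliminates option~(c) in this range, and your proposal is missing it. Your case-by-case sketch for $l_\infty\le-2k$ is closer to the mark, since there the only boundary saddles are $P_1$ and $P_2$ and one can check directly that no return connection $P_1\to P_2$ exists; but you should also verify, as the paper does, that the single axial segment $P_2\to P_1$ cannot by itself be the $\omega$-limit set of an interior orbit.
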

\begin{proof}
Note that any solution to Problem \rf{P2P3}
 gives rise to the associated orbit of \rf{LVSrho}
totally located in the first quadrant. 
Therefore, the $\om$-limit set of such an orbit is contained in 
the domain $\{x\gt 0\} \cap \{y\gt 0\}$. We now analyze 
the $\om$-limit set of \rf{LVSrho} by using Theorem \ref{tie1111}.

First, we note that according to Theorem 2.1 in \cite{Cao}, \rf{LVSrho} does not
have non-trivial closed orbits in the first quadrant. This excludes option (b) 
of Theorem \ref{tie1111}. Theorem 2.1 also implies that in the hypothesis that
option (c) is realized, the $\om$-limit set of  \rf{LVSrho} cannot have homoclinic
orbits connecting saddles of  \rf{LVSrho} to themselves.

First, consider the case $l_\infty\lt -2k$  or, equivalently, $\dl\gt 0$. 
Let us show that this condition
fully excludes option (c). For this, we analyze the behavior of heteroclinic orbits 
of \rf{limit1111}  that can possibly start or end at the saddle points of \rf{limit1111}.
Since $l_\infty\lt -2k$, we have to separately analyze the situation
described in each of the first four columns of Table \ref{table:table1}. Note that
we can immediately disregard the first two columns since there is
only one saddle point $P_2$ in the domain $\{x\gt 0\} \cap \{y\gt 0\}$.
Thus, without loss of generality, we assume that $n+l_\infty>0$.
Note that we have to consider only the saddle points $P_1$ and $P_2$.
Indeed, if $l_\infty<-2k$, $P_4^+$ is also a saddle, but
$P_4^+ \notin \{y\gt 0\}$; so orbits starting or ending at $P_4^+$ are excluded.
{If $l_\infty = -2k$, then $P_3^+ = P_4^+$ is a saddle-node.} 
Thus, we analyze heteroclinic orbits that can possibly connect $P_1$ and $P_2$.

Below, $(x(t),y(t))$ denotes such an orbit.
Suppose first that  $x(t)$ and $y(t)$ are not identically equal zero 
at a neighborhood of $P_1$.
Then, at $P_1$ we have $(\ln x)' = n+l_\infty>0$ 
and $(\ln y)' = - \frac{n-2k}{k}<0$. 
This implies that as $t\to +\infty$,
$x$ increases and $y$ decreases. Therefore, any 
 heteroclinic orbit can enter $P_1$ as $t\to+\infty$ only from 
 quadrant ii. In a similar manner,  any 
 heteroclinic orbit can enter $P_1$ as $t\to-\infty$
 only from quadrant iv. Thus, the aforementioned case is excluded since
 the respective orbits cannot attract trajectories of \rf{LVSrho} corresponding
 to solutions of \rf{P2P3}.
 Next, if $x(t)$ equals zero in a neighborhood of $P_1$,
 then $y(t)$,  in a neighborhood of $P_1$, solves the ODE $\dot y = y^2 - \frac{n-2k}{k} y$ whose solution is
 \aaa{
 \lb{y2222}
 y(t) = \frac{n-2k}{k}\Big(1+ C e^{\frac{n-2k}k t}\Big)^{-1},
 }
 where $C\ne 0$ is a constant. If $C=0$, then $(x(t),y(t)) = P_2$
 is a trivial orbit and shall be considered within the analysis of case (a) of
 Theorem \ref{tie1111}. Note that, 
 by uniqueness, $(0, y(t))$, where $y(t)$ is given by \rf{y2222},
 is the only solution  to \rf{limit1111} with the property that
 $x=0$ in a neighborhood of $P_1$. It represents a heteroclinic orbit
that starts at $P_2$ at $t=-\infty$ and ends at $P_1$ at $t=+\infty$.
 Furthermore, if $y(t)=0$ in a neighborhood of $P_1$, then $x(t)$, 
 in a neighborhood of $P_1$, solves the 
 ODE $\dot x = x(n+l_\infty - x)$ whose solution is
 \aaa{
 \lb{x2222}
 x(t) = \frac{n+l_\infty}{C e^{-(n+l_\infty)t} + 1}, 
 }
 where $C\ne 0$ is a constant. Remark that 
 if $C=0$, then $(x(t),y(t)) = P_3^+$
 is a trivial orbit and shall be considered within the analysis of case (a) of
 Theorem \ref{tie1111}.  Also remark that 
 by uniqueness, $(x(t), 0)$, where $x(t)$ is given by \rf{x2222},
 is the only solution  to \rf{limit1111} with the property that
 $y=0$ in a neighborhood of $P_1$. However, the above-described orbit connects
 $P_1$ with $P_3^+$, not with $P_2$.
 
 Therefore, the segment $[0,\frac{n-2k}k]$ is the only heteroclinic orbit connecting
 $P_1$ and $P_2$.
 Let us prove that this orbit cannot belong to the $\om$-limit set 
 of any trajectory of \rf{LVSrho}. Arguing by contradiction, we assume
  that $(\td x(t),\td y(t))$ is such a trajectory.
   Fix $\eps\in (0, n-2k)$, sufficiently small, and find $\tau_1>0$ such that $\td x(t)<\eps$
  for $t>\tau_1$. Fix $y_0\in (0,\frac{n-2k-\eps}{k})$. There exists a sequence
  $t_N\to +\infty$ such that $\td y(t_N)\to y_0$.  Find $\tau_2>\tau_1$ such that
  $\td y(\tau_2)< \frac{n-2k-\eps}{k}$. It holds that ${\td y'}(\tau_2)<0$, 
  and therefore, $\td y' < 0$ on $(\tau_2,+\infty)$. Since $(0,0)$
  cannot be a limit point of any trajectory of \rf{lv1111},
  $y_1 = \lim_{t\to+\infty} \td y(t) >0$. Hence, the interval
  $[0,y_1)$ cannot belong to the $\om$-limit set of $(\td x(t),\td y(t))$
  which is a contradiction.

Thus, we proved that option {(c)} of Theorem \ref{tie1111} is also excluded.
  The only remaining option is (a). Now we have to analyze trajectories of
  \rf{LVSrho} possibly ending at the stationary points $P_i$, $i=1,2,3,4$.
  The same argument as we used in the analysis of heteroclinic 
  orbits of \rf{limit1111} shows that orbits of \rf{LVSrho} cannot end at $P_1$ or $P_4^+$,
  where $P_4^+$ is considered only when $l_\infty<-2k$. Thus,
  the $\om$-limit set of \rf{LVSrho} can only consist of $P_2$ or $P_3^+$.
  
  {Suppose now $l_\infty>-2k$ or, equivalently, $\dl<0$. Suppose item (c)
  of Theorem  \ref{tie1111}  takes place. This situation
  is reflected in the fifth column of Table \ref{table:table1}.
   The $\om$-limit set consists then of three saddle {points} $P_1$, $P_2$, and $P_3^+$
   and heteroclinic orbits connecting them. In particular, by the above argument,
   the orbit $(0,y(t))$, where $y(t)$ is given by \rf{y2222}, 
   goes from $P_2$ to $P_1$; and the orbit $(x(t),0)$, 
   where $x(t)$ is given by \rf{x2222}, goes from $P_2$ to $P_3^+$. 
   Since $[P_2,P_1]$ and $[P_1,P_3^+]$ attract trajectories $\ffi(t)$ of \rf{LVSrho},
   by Lemma \ref{inward},  it holds
   that $\ffi(t) \in G_-\cap \{x\gt 0\} \cap \{y\gt 0\}$ for all $t>t_0$ for some $t_0\in\Rnu$. 
   Therefore,
   any heteroclinic orbit, name it $\psi(t)$, starting at $P_3^+$ and ending at $P_2$, 
   should be located
   in the domain $(G_-\cup G_0) \cap \{x\gt 0\} \cap \{y\gt 0\}$. 
   However, Lemma \ref{lemp2222} implies that in a neighborhood of 
   $+\infty$, $\psi(t)$ enters $P_2$
   from $G_+$. Therefore, $\psi(t)$ cannot belong to the $\om$-limit set of \rf{LVSrho}.
   The conditions of Lemma \ref{lemp2222} are fulfilled since
   in a neighborhood of $P_2$, $\psi(t)$ is represented
   as a graph of function $\hat y(x)$ and $\hat y'(0)$ exists by
   item {\it (ii)} of Theorem \ref{p2222}, applied to system \rf{limit1111}. Indeed, 
   the asymptotic  representations obtained in item {\it (ii)}  of Theorem \ref{p2222}
   remain valid for $\psi(t)$, since the proof also works in the situation when $\zeta(t) = 0$.}
  Thus, we obtained a contradiction to option (c).
   The only remaining option is (a). Since the coordinates of $P_4^+$
   are positive and $P_4^+\in G_-$,
   it can belong to the $\om$-limit set of \rf{LVSrho}. 
   To see that $P_4^+\in G_-$, we substitute 
   $y=\frac{2k+l_\infty}{q-k}$ (the ordinate of $P_4^+$) into the right-hand side of the 
   equation for $G_0$: 
   $x = - \frac{k(q+1)}{k+1} y + \frac{q+1}{k+1}(n-2k)$.  By \rf{inq*1111}, we obtain
   \aa{
   - \frac{k(q+1)}{k+1}\frac{2k+l_\infty}{q-k} + \frac{q+1}{k+1}(n-2k)
   =\frac{q+1}{k+1} \, \frac{q(n-2k) - k(n+l_\infty)}{q-k} > \frac{q(n-2k) - k(n+l_\infty)}{q-k}.
   }
   This implies that $P_4^+\in G_-$. Further, by Lemma \ref{prop-2k1111},
   $P_3^+$ cannot belong to the $\om$-limit set. Therefore, if $l_\infty >-2k$, 
   the $\om$-limit  set of \rf{LVSrho}  consists either of $P_2$ or $P_4^+$.
  \end{proof}
  \begin{remark}
  \lb{rm1111}
  Remark that, under
  $(\rho.1)$, $(\rho.2)$, $(\rho.4)$, $(\rho.5)$,
   by the existence result obtained in {Proposition \ref{prop35}}
  global solutions to Problem \rf{IVPw} exist. Furthermore, by Corollary \ref{pro1111},
  these solutions classify as  $P_2$, $P_3^+$, or $P_4^+$. 
    \end{remark}
 
 \begin{lemma}
\lb{krho}
Assume $(\rho. 5)$. Then,
there exists a constant $c_\rho>0$ such that as $r\to+\infty$,
\aa{
\rho(r) = c_\rho r^{l_\infty}(1+o(1)).
}
\end{lemma}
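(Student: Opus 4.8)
The plan is to show that the auxiliary function $K_\infty(r) := r^{-l_\infty}\rho(r)$ converges to a finite positive limit as $r\to+\infty$, and to take that limit as $c_\rho$. This is the mirror image at $+\infty$ of the argument used in Remark \ref{rk1111} at $0$, with $(\rho.5)$ playing the role that $(\rho.3)$ played there.

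First I would compute the logarithmic derivative of $K_\infty$. Exactly as in \rf{Kr1111}, for $r>0$ (where $\rho\in \C^2(0,\infty)$ and $\rho>0$ by $(\rho.1)$) one has $(\ln K_\infty(r))' = -\frac{l_\infty}{r} + \frac{\rho'(r)}{\rho(r)} = \frac{R(r)-l_\infty}{r}$. By $(\rho.5)$ there are $\teta>0$, $C>0$ and $r_0>0$ such that $|R(s)-l_\infty|\le C s^{-\teta}$ for all $s\ge r_0$, whence $\int_{r_0}^{\infty}\frac{|R(s)-l_\infty|}{s}\,ds \le C\int_{r_0}^{\infty} s^{-1-\teta}\,ds = \frac{C}{\teta}\,r_0^{-\teta}<\infty$. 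Therefore the improper integral $\int_{r_0}^{\infty}\frac{R(s)-l_\infty}{s}\,ds$ converges absolutely, and integrating the identity for $(\ln K_\infty)'$ from $r_0$ to $r$ shows that
\[
\ln K_\infty(r) = \ln K_\infty(r_0) + \int_{r_0}^{r}\frac{R(s)-l_\infty}{s}\,ds \longrightarrow \ln K_\infty(r_0) + \int_{r_0}^{\infty}\frac{R(s)-l_\infty}{s}\,ds \quad (r\to+\infty).
\]
Setting $c_\rho := \exp\!\big(\lim_{r\to+\infty}\ln K_\infty(r)\big)$, we obtain $K_\infty(r)\to c_\rho$, i.e. $\rho(r)=c_\rho r^{l_\infty}(1+o(1))$; moreover $c_\rho>0$ since it is the exponential of a real number (note $K_\infty(r_0)=r_0^{-l_\infty}\rho(r_0)>0$ by $(\rho.1)$). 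Equivalently, one may record the explicit representation $\rho(r) = c_\rho\, r^{l_\infty}\exp\!\big(-\int_{r}^{\infty}\frac{R(s)-l_\infty}{s}\,ds\big)$, with the exponential tending to $1$ as $r\to+\infty$.

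There is essentially no hard step here. The only point requiring a little care is that the bound in $(\rho.5)$ is only assumed in a neighborhood of $+\infty$, so the lower endpoint $r_0$ of integration must be chosen large enough for the estimate $|R(s)-l_\infty|\le Cs^{-\teta}$ to hold; on the complementary compact interval $R$ is continuous and $\rho$ is bounded and bounded away from $0$, so the value $K_\infty(r_0)$ is a well-defined finite positive number and nothing is lost.
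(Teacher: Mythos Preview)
Your proof is correct and follows essentially the same approach as the paper: compute the logarithmic derivative $(\ln(r^{-l_\infty}\rho(r)))' = \frac{R(r)-l_\infty}{r}$, integrate, and use $(\rho.5)$ to get absolute convergence of $\int^\infty \frac{R(s)-l_\infty}{s}\,ds$. The paper's version is terser (it writes $\ln\rho(r)$ directly rather than introducing $K_\infty$), but the argument is identical.
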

\begin{proof}
By the definition of $R(r)$,
\aa{
\ln \rho(r) = \ln \rho_0 + \int_{r_0}^r \frac{R(s)-l_\infty}{s} ds 
+ l_\infty\int_{r_0}^r \frac{ds}{s}.
}
By $(\rho. 5)$, the integral containing $R(s)$ converges as $r\to+\infty$,
which implies the statement.
\end{proof}
In what follows, it will be rather convenient to formulate a version of
 assumption $(\rho. 6)$ in terms of the function $\zeta(t)=R(e^t)-l_\infty$.
\bi
\item[$(\rho. 6)$] One of the assumptions, (1) or (2), is fulfilled:
\bi
\item[(1)]\; $\lim_{t\to+\infty} e^{\dl t} \zeta(t) \in [0,+\infty)$ and  
$\nu_+> \dl$;
\item[(2)]\;  $\lim_{t\to+\infty} e^{\dl t} \zeta(t) = +\infty$,
the limit
$\hat \nu = \lim\limits_{t\to+\infty}\big(\!\!-\!\frac{\zeta'(t)}{\zeta(t)}\big)$ exists, 
and  $\nu_+> \hat \nu$.
\ei
\ei
First of all note that by L'Hopital's rule and since $r=e^t$,
\aaa{
\lb{lms}
\lim_{r\to+\infty} \frac{\ln |R(r) - l_\infty|}{\ln r} = \lim_{t\to+\infty}
\frac{\ln |\zeta(t)|}{t}
= \lim_{t\to+\infty} \frac{\zeta'(t)}{\zeta(t)}.
}
For the purpose of examples of weights given in Subsection \ref{examples2222}, 
we formulate an alternative assumption  
$(\rho. 6')$ which is stronger than $(\rho. 6)$, but it is easier to be verified.
\bi
\item[$(\rho. 6')$] Define $\psi(r) = r^\teta |R(r) - l_\infty|$. We assume 
(a) $\liminf_{r\to+\infty} \psi(r) >0$;
(b) $\nu_+>\min\{\dl,\teta\}$; and (c) $\lim_{r\to+\infty} \psi(r)>0$
exists in the case $\dl =\teta$.
\ei
\begin{lemma}
\lb{rem1111}
$(\rho. 6')$ implies $(\rho. 6)$.
\end{lemma}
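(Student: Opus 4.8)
The plan is to work in the logarithmic variable $t=\ln r$ and to reduce $(\rho.6')$ to a statement about the growth of $e^{\dl t}\zeta(t)$, splitting into the three cases $\dl<\teta$, $\dl=\teta$ and $\dl>\teta$. The single identity doing all the work is $\psi(e^t)=e^{\teta t}\,|\zeta(t)|$, that is,
\[
e^{\dl t}\,|\zeta(t)|=e^{(\dl-\teta)t}\,\psi(e^t).
\]
First I would record that, by $(\rho.5)$, $\psi$ is bounded above in a neighborhood of $+\infty$, while $(\rho.6')$-(a) bounds it below there by a positive constant; in particular $R(r)-l_\infty$ is eventually nonzero, and since $R$ is continuous by $(\rho.1)$ it keeps a fixed sign near $+\infty$, which we assume to be positive. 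So everything comes down to the limiting behavior of $e^{(\dl-\teta)t}\psi(e^t)$ and, in the regime where this blows up, of $-\zeta'(t)/\zeta(t)$.

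In the case $\dl<\teta$, $e^{\dl t}\zeta(t)=e^{(\dl-\teta)t}\psi(e^t)$ is the product of a vanishing factor and a bounded one, hence tends to $0\in[0,+\infty)$; since $\min\{\dl,\teta\}=\dl$, assumption $(\rho.6')$-(b) gives $\nu_+>\dl$, so $(\rho.6)$-(1) holds. In the case $\dl=\teta$ one has $e^{\dl t}\zeta(t)=\psi(e^t)$, which converges to $L:=\lim_{r\to+\infty}\psi(r)\in(0,+\infty)$ by $(\rho.6')$-(c); again $\min\{\dl,\teta\}=\dl$, so $\nu_+>\dl$ and $(\rho.6)$-(1) holds. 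Both of these cases are routine.

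The case $\dl>\teta$ is where the real work lies. Here $\psi$ is bounded below by a positive constant, so $e^{\dl t}\zeta(t)=e^{(\dl-\teta)t}\psi(e^t)\to+\infty$, which is the first clause of $(\rho.6)$-(2); and $\min\{\dl,\teta\}=\teta$, so $(\rho.6')$-(b) yields $\nu_+>\teta$. It therefore remains to show that $\hat\nu:=\lim_{t\to+\infty}\bigl(-\zeta'(t)/\zeta(t)\bigr)$ exists and equals $\teta$. Differentiating $\ln|\zeta(t)|=-\teta t+\ln\psi(e^t)$ (valid where $\zeta\neq0$, using $\rho\in C^2$) gives $-\zeta'(t)/\zeta(t)=\teta-\tfrac{d}{dt}\ln\psi(e^t)$, so the claim reduces to $\tfrac{d}{dt}\ln\psi(e^t)\to0$, equivalently $r\psi'(r)/\psi(r)\to0$. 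The hard part will be exactly this last step: being trapped between two positive constants does not by itself prevent $\psi$ from oscillating, so one must use the finer behavior of $\rho$ — via the substitution in \rf{lms}, this is the implication that $\ln|R(r)-l_\infty|/\ln r\to-\teta$ forces $-rR'(r)/(R(r)-l_\infty)\to\teta$ — which for the concrete weights listed in Subsection \ref{examples2222} is immediate from an exact asymptotic $R(r)-l_\infty\sim c\,r^{-\teta}$. Combining the three cases then gives $(\rho.6)$.
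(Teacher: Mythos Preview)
Your case split and the handling of $\dl<\teta$ and $\dl=\teta$ coincide with the paper's.

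For $\dl>\teta$ you aim at the reformulation of $(\rho.6)$-(2) given just before the lemma, where $\hat\nu=\lim_{t\to+\infty}\bigl(-\zeta'(t)/\zeta(t)\bigr)$, and you rightly observe that two-sided boundedness of $\psi$ does not by itself force $r\psi'(r)/\psi(r)\to0$; you then leave this as an unproved ``hard part.'' The paper sidesteps this entirely by proving the \emph{original} form of $(\rho.6)$-(2) stated in the Introduction, in which $\hat\nu=-\lim_{r\to+\infty}\frac{\ln|R(r)-l_\infty|}{\ln r}$. With that definition the argument is immediate: writing $\ln|R(r)-l_\infty|=-\teta\ln r+\ln\psi(r)$, the term $\ln\psi(r)$ is bounded (above by $(\rho.5)$, below away from $-\infty$ by $(\rho.6')$-(a)), so $\frac{\ln\psi(r)}{\ln r}\to0$ and hence $\hat\nu=\teta$; then $(\rho.6')$-(b) with $\min\{\dl,\teta\}=\teta$ gives $\nu_+>\teta=\hat\nu$. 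The paper then appeals to \rf{lms} to pass to $-\lim\zeta'/\zeta$, though, as you implicitly notice, L'H\^opital only runs in the opposite direction. So your diagnosed gap is genuine for the $\zeta'/\zeta$ formulation, but the paper's actual target is the Ces\`aro-type limit, which needs only boundedness of $\ln\psi$ and no control on $\psi'$.
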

\begin{proof}
Note that options (1) and (2) in the assumption $(\rho. 6)$ correspond to the cases
$\dl\lt \teta$ and $\dl>\teta$, respectively. 
Further we note that for all values of $\dl$ and $\teta$,
the limit
$\lim_{r\to+\infty} r^\dl |R(r) - l_\infty| = \ell$ exists, finite or infinite. 
More specifically, if $\dl< \teta$, then $\ell = 0$; if $\dl = \teta$, then $\ell \in (0,+\infty)$;
if $\dl>\teta$, then $\ell = +\infty$.
In the latter case, by the definition of $\psi(r)$, L'Hopital's rule,  and \rf{lms},
\aa{
\teta = -\frac{\ln r^{-\teta}}{\ln r} - \lim_{r\to+\infty} \frac{\ln \psi(r)}{\ln r}  
= -\lim_{r\to+\infty} \frac{\ln |R(r) - l_\infty|}{\ln r} 
= -\lim_{t\to+\infty} \frac{\zeta'(t)}{\zeta(t)} = \hat \nu.
}
This proves $(\rho. 6)$.
\end{proof}
\begin{corollary}
\lb{cor1111}
Let $\liminf_{r\to+\infty} \psi(r) >0 $ and $\dl\ne \teta$.
Then $(\rho. 6)$ is {equivalent} to
\bi
\item[$(\rho. 6')$] $\nu_+>\min\{\dl,\teta\}$.
\ei
\end{corollary}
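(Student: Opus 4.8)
The plan is to prove the two implications separately, reusing the estimates already made for Lemma~\ref{rem1111}. For the direction $\nu_+>\min\{\dl,\teta\}\Rightarrow(\rho.6)$ I would invoke Lemma~\ref{rem1111} directly: since $\dl\ne\teta$, part (c) of the three-part assumption $(\rho.6')$ used there is vacuous, so that assumption reduces to the standing hypothesis $\liminf_{r\to+\infty}\psi(r)>0$ together with $\nu_+>\min\{\dl,\teta\}$; hence $(\rho.6)$ follows from Lemma~\ref{rem1111}.

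For the converse $(\rho.6)\Rightarrow\nu_+>\min\{\dl,\teta\}$, the first step is to note that under $(\rho.5)$ and $\liminf_{r\to+\infty}\psi(r)>0$ the function $\psi(r)=r^{\teta}|R(r)-l_\infty|$ satisfies $c\lt\psi(r)\lt C$ for all large $r$, with $c,C>0$: the upper bound is the $O$-estimate of $(\rho.5)$ and the lower bound is the $\liminf$ hypothesis. Writing $e^{\dl t}|\zeta(t)|=\psi(e^t)\,e^{(\dl-\teta)t}$, the assumption $\dl\ne\teta$ gives a dichotomy: if $\dl<\teta$ then $e^{\dl t}\zeta(t)\to 0$, and if $\dl>\teta$ then $e^{\dl t}|\zeta(t)|\to+\infty$. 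So if $(\rho.6)$ holds and $\dl<\teta$, only $(\rho.6)$-(1) is compatible with $e^{\dl t}\zeta(t)\to 0$, whence $\nu_+>\dl=\min\{\dl,\teta\}$; and if $(\rho.6)$ holds and $\dl>\teta$, a finite limit as in (1) is impossible, so $(\rho.6)$-(2) must hold, giving existence of $\hat\nu$ and $\nu_+>\hat\nu$.

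It then remains to identify $\hat\nu$ in the case $\dl>\teta$. From $\ln|\zeta(t)|=\ln\psi(e^t)-\teta t$ and the boundedness of $\ln\psi(e^t)$ one gets $\lim_{t\to+\infty}\frac{\ln|\zeta(t)|}{t}=-\teta$; combined with identity~\rf{lms}, whose final equality holds by L'Hopital's rule precisely because $\lim_{t\to+\infty}\zeta'(t)/\zeta(t)$ exists under $(\rho.6)$-(2), this yields $\hat\nu=\teta$. Hence $\nu_+>\teta=\min\{\dl,\teta\}$ in this case as well, which closes the converse.

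Since everything reduces to the estimates already appearing in the proof of Lemma~\ref{rem1111}, no genuinely new difficulty arises; the only point requiring care is that the two-sided bound $c\lt\psi(r)\lt C$ — and not merely the one-sided $O$-bound of $(\rho.5)$ — is what forces the dichotomy $\dl<\teta$ versus $\dl>\teta$ to select a unique branch of $(\rho.6)$, which is exactly where the hypothesis $\liminf_{r\to+\infty}\psi(r)>0$ is used.
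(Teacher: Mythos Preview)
Your proof is correct and matches the approach implicit in the paper: the corollary is stated there without proof, as an immediate consequence of the computations in the proof of Lemma~\ref{rem1111}. Your argument makes explicit exactly what the paper leaves to the reader --- in particular, the converse direction, where the two-sided bound $c\lt\psi(r)\lt C$ forces the dichotomy and the identity $\hat\nu=\teta$ (already computed in Lemma~\ref{rem1111}'s proof) closes the case $\dl>\teta$.
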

\begin{corollary}
\lb{cor2222}
Let $\lim_{r\to+\infty} \psi(r)>0$ exist and $\dl = \teta$. Then, $(\rho. 6)$ is {equivalent} to
$(\rho. 6')$, where the latter is reduced to the inequality $\dl < \frac{n-2k}{k+1}$.
\end{corollary}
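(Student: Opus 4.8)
The plan is to unwind $(\rho.6)$ and $(\rho.6')$ under the standing hypotheses of the corollary (that $\dl=\teta$ and that $\lim_{r\to+\infty}\psi(r)$ exists and is positive) and to show that each of them reduces to the single inequality $\nu_+>\dl$, which is then seen to be equivalent to $\dl<\frac{n-2k}{k+1}$ by elementary algebra.

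First I would record the identity that drives everything. With $r=e^t$ and $\teta=\dl$ one has $\psi(r)=r^{\teta}|R(r)-l_\infty|=r^{\dl}|R(e^t)-l_\infty|=|e^{\dl t}\zeta(t)|$, where $\zeta(t)=R(e^t)-l_\infty$. Thus the hypothesis that $L_0:=\lim_{r\to+\infty}\psi(r)$ exists and is positive means exactly that $\lim_{t\to+\infty}|e^{\dl t}\zeta(t)|=L_0\in(0,+\infty)$. Two consequences follow at once. Since $L_0<+\infty$, the limit appearing in $(\rho.6)$-(2) cannot equal $+\infty$, so $(\rho.6)$-(2) is impossible and $(\rho.6)$ holds if and only if $(\rho.6)$-(1) holds; this is in accordance with the observation in the proof of Lemma \ref{rem1111} that the case $\dl=\teta$ belongs to the regime of option (1). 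Moreover, since $L_0>0$ and $R$ is continuous on $(0,\infty)$ by $(\rho.1)$, the function $R-l_\infty$ is nonvanishing, hence of constant sign, for all sufficiently large $r$; therefore $e^{\dl t}\zeta(t)$ tends to the finite limit $\pm L_0$, so the finiteness requirement contained in $(\rho.6)$-(1) is automatically satisfied. Consequently $(\rho.6)$-(1), and hence $(\rho.6)$, is equivalent to its surviving clause $\nu_+>\dl$.

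Next I would dispose of $(\rho.6')$. Since $\lim_{r\to+\infty}\psi(r)=L_0>0$ exists, clauses (a) and (c) of $(\rho.6')$ hold trivially, and since $\dl=\teta$ gives $\min\{\dl,\teta\}=\dl$, clause (b) reads $\nu_+>\dl$. Hence $(\rho.6')$ is also equivalent to $\nu_+>\dl$, which yields $(\rho.6)\Leftrightarrow(\rho.6')$; this is consistent with Lemma \ref{rem1111}, which already provides the implication $(\rho.6')\Rightarrow(\rho.6)$ unconditionally. It remains to rewrite $\nu_+>\dl$ in the announced form. Using $\nu_+=n+l_\infty$ and $\dl=-\frac{2k+l_\infty}{k}$, multiplying by $k>0$ and collecting terms turns $\nu_+>\dl$ into $k(n+2)+(k+1)l_\infty>0$; clearing the positive denominator $k(k+1)$ in $\dl<\frac{n-2k}{k+1}$ and rearranging produces the very same inequality. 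Therefore $\nu_+>\dl\Leftrightarrow\dl<\frac{n-2k}{k+1}$, which completes the argument.

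Since the corollary is a short unwinding of definitions, I do not expect a genuine obstacle; the one point that requires attention is the second step, namely verifying that the scaled remainder $e^{\dl t}\zeta(t)$ actually possesses a finite limit (so that $(\rho.6)$-(1) truly collapses to $\nu_+>\dl$), which rests only on the continuity of $R$ from $(\rho.1)$ together with the positivity of $L_0$. Everything else is bookkeeping and elementary algebra.
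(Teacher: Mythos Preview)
Your proposal is correct and follows essentially the same route the paper intends: the corollary is stated in the paper without proof, as an immediate consequence of the proof of Lemma~\ref{rem1111}, which already identifies the case $\dl=\teta$ with option~(1) of $(\rho.6)$ and with a finite positive limit $\ell\in(0,+\infty)$. Your argument is simply a more explicit unwinding of that observation, and your care in verifying that the \emph{signed} limit $\lim_{t\to+\infty}e^{\dl t}\zeta(t)$ (not just its absolute value) exists in $\Rnu$ --- via the eventual constant sign of $R-l_\infty$ forced by $L_0>0$ and continuity --- is a useful clarification that the paper leaves implicit.
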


\subsection{$P_2$\,-solutions}
From Remark \ref{rm1111}, we know that if $q\gt q^*(k,l_0)$, then 
solutions to Problem \rf{P2P3} exist 
and can classify as $P_2$, $P_3^+$, or $P_4^+$.
In this subsection, we characterize $P_2$\,-solutions. 

\subsubsection{Proof of Theorem \ref{p2222}}
We are ready to prove Theorem \ref{p2222}.
The proof uses some ideas from the related result in \cite{BattLi} (Theorem 7.2).
\begin{remark}
Note that the number $\gm =  \frac{q}{k}(n-2k) - (n+l_\infty)$, defined in the statement of
Theorem \ref{p2222}, is positive by \rf{inq*1111} and $(\rho.2)$.
\end{remark}
\begin{proof}[Proof of Theorem \ref{p2222}]
\textit{Step 1. (i) $\to$ (ii)}. Suppose {that} $w(r)$ is a $P_2$\,-solution, i.e.,
as $t\to+\infty$, $x(t)\to 0$, $y(t)\to \frac{n-2k}k$. Let us obtain the {asymptotic}
representations \rf{p2xy}.
In a  {neighborhood} of $P_2(0,\frac{n-2k}k)$, one has the representation 
of  \rf{LVSrho} as a time-dependent perturbation of  \rf{limit1111}:
\begin{equation}
\lb{perturba}
\begin{pmatrix}
\frac{d \bar x}{dt}\\
\\
\frac{d \bar y}{dt}
\end{pmatrix}
= A \begin{pmatrix}
 \bar x\\
\\
\bar y
\end{pmatrix}
+ \begin{pmatrix}
-\bar x^2 -q\bar x\bar y\\
\\
\frac{\bar x\bar y}{k} + \bar y^2
\end{pmatrix}
+ \begin{pmatrix}
\bar x\zeta(t)\\
\\
0
\end{pmatrix}
\end{equation}
with $\bar x = x$, $\bar y = y-\frac{n-2k}{k}$, and the matrix 
\aa{
A=
\begin{pmatrix}
-\gm & 0\\
\\
\frac{n-2k}{k^2} &  \frac{n-2k}{k}
\end{pmatrix}.
}
Rewrite \rf{perturba} with respect to $\psi(t) = (x(t),\bar y(t))$ as follows:
\aaa{
\lb{Apsi1111}
\dot \psi(t) = A\psi(t) + \dlt(\psi(t)) + \eta(t,\psi(t)),
}
where $\dlt(\psi(t))$ and $\eta(t,\psi(t))$ denote the two last terms 
on the right-hand side of \rf{perturba}, respectively.
Note that $\la_1 = -\gm <0$ and $\la_2 = \frac{n-2k}{k}>0$
are the eigenvalues of $A$. Since $\la_1$ and $\la_2$ are real and different,
the corresponding unit eigenvectors  $v_1$ and $v_2$ are non-collinear.
Therefore, each vector $z\in\Rnu^2$ is a linear combination $z= z_1 v_1 + z_2 v_2$.
Define the projection operators $P_1z = z_1v_1$ and $P_2 z = z_2v_2$.
Since $P_1$ and $P_2$ are linear and continuous, there exists a constant $C>0$
such that
\aaa{
\lb{proj1111}
 |P_i z| \lt C |z|, \quad  i=1,2.
}
 Rewritng \rf{Apsi1111} in the semigroup form and taking into account that
$P_i e^{tA} = e^{tA} P_i = e^{t\la_i} P_i$, $i=1,2$,  we obtain
\mmm{
\lb{psi2222}
\psi(t) = e^{\la_1(t-\tau)}P_1\psi(\tau) + e^{\la_2(t-{\bar \tau})}P_2\psi({\bar \tau})
+ \int_{\tau}^t e^{\la_1(t-s)}P_1 \dlt(\psi(s)) ds\\
+  \int_{{\bar \tau}}^t e^{\la_2(t-s)}P_2 \dlt(\psi(s)) ds
+ \int_{\tau}^t e^{\la_1(t-s)}P_1 \eta(s,\psi(s)) ds
+  \int_{{\bar \tau}}^t e^{\la_2(t-s)}P_2 \eta(s,\psi(s)) ds,
}
where $\tau$ and $\bar\tau$ are arbitrary positive numbers. Above,
we added the equations for $P_1\psi(t)$ and $P_2\psi(t)$.
Note that as $t\to+\infty$, $\psi(t)\to 0$, $\dlt(\psi(t))\to 0$, and $\eta(t,\psi(t))\to 0$.
Fix $\tau>0$ such that for all $t>\tau$, $\psi(t)$, $\dlt(\psi(t))$, and  $\eta(t,\psi(t))$
remain bounded. For $t>\tau$, in \rf{psi2222}, we pass to the limit as ${\bar \tau}\to+\infty$. 
We write the resulting equation after multiplying the both parts by $e^{-\la_1 t}$:
\mmm{
\lb{psi1111}
e^{-\la_1 t}\psi(t) = e^{-\la_1\tau}P_1\psi(\tau) 
+ \int_{\tau}^t e^{-\la_1s}P_1 \big(\dlt(\psi(s))+  \eta(s,\psi(s))\big) ds\\
-  e^{-\la_1 t}\int_t^{+\infty} e^{\la_2(t-s)}P_2 \big(\dlt(\psi(s)) + \eta(s,\psi(s))\big)ds
= e^{-\la_1\tau}P_1\psi(\tau)  + I(t) + J(t).
}
Let us show that $I(t)$ and $J(t)$ converge to finite limits.  To this end, we find an exponentially decaying 
estimate for $|\psi(t)|$ by using Lemma 7.1 from \cite{BattLi}. First, we find $\sg>0$
such that 
\aaa{
\lb{beta1111}
\beta: = C\sg\Big(\frac1{|\la_1|} + \frac1{\la_2}\Big)<1.
}
Note that 
\aaa{
\lb{est111}
|\dlt(\psi(t))| \lt (|x(t)| + q|\bar y(t)|)|\psi(t)| 
\quad \text{and} \quad 
|\eta(t,\psi(t))| \lt |\zeta(t)| |\psi(t)|;
}
so the number $\tau$ in \rf{psi1111} can be chosen in such a way that 
 $|\dlt(\psi(t))| + |\eta(t,\psi(t))| < \sg |\psi(t)|$ for all $t\gt \tau$. Therefore,
 by \rf{proj1111}, 
 \aaa{
 \lb{estimate1111}
 P_i \big(\dlt(\psi(s))+  \eta(s,\psi(s))\big) \lt C\sg |\psi(t)|, \qquad i=1,2.
 }
Estimate \rf{estimate1111} and equation \rf{psi1111} imply that there exist  a constant $K_1>0$
such that
\aa{
|\psi(t)| \lt K_1 e^{\la_1 t} + C\sg \int_\tau^t e^{\la_1 (t-s)}|\psi(s)| ds + C\sg
 \int_0^{+\infty} e^{-\la_2 s}|\psi(s+t)| ds.
}
By Lemma 7.1 from \cite{BattLi}, there exist a constant $K_2>0$ such that
\aaa{
\lb{exp1111}
|\psi(t)| \lt K_2 e^{(\frac{C\sg}{1-\beta} + \la_1)t}.
}
Furthermore,  \rf{est111} imply that there exists a constant $K_3>0$ such that
\aaa{
\lb{est2222}
|\dlt(\psi(t))| \lt (1+q) |\psi(t)|^2 
\quad \text{and} \quad 
|\eta(t,\psi(t))| \lt K_3e^{-\teta t} |\psi(t)|.
}
By \rf{exp1111} and \rf{est2222}, we have the following estimates for $I(t)$ and $J(t)$ in \rf{psi1111}:
\aa{
& |I(t)| \lt K_4 \int_\tau^t \big(e^{(\frac{2C\sg}{1-\beta} + \la_1)s} + e^{(\frac{C\sg}{1-\beta} - \teta)s}\big) ds, \\
& |J(t)| \lt K_5 \, e^{(\la_2-\la_1)t}\int_t^{+\infty} 
\big(e^{(\frac{2C\sg}{1-\beta} + 2\la_1 - \la_2 )s} + e^{(\frac{C\sg}{1-\beta} - \teta +\la_1 - \la_2)s}\big)ds,
}
where $K_4,K_5>0$ are constants.
In \rf{beta1111}, we can take $\sg>0$ even smaller so that $\frac{2C\sg}{1-\beta} + \la_1<0$
and $\frac{C\sg}{1-\beta} - \teta<0$. With this choice of $\sg$, $\lim_{t\to+\infty} I(t)$ is finite
and $\lim_{t\to+\infty} J(t) = 0$. Therefore, $\lim_{t\to+\infty} e^{\gm t} \psi(t)$ exists.
We denote it by $\vec{c_1}{c_2}$.  Note that $c_1 = \lim_{t\to+\infty} e^{-\gm t} x(t) \gt 0$.
In {\it Step 3}, we will prove that $c_1>0$; the expression for $c_2$ via $c_1$ 
is so far unknown and will be obtained at the end of the proof.

\textit{Step 2. (ii) $\to$ (i)}. This implication is straightforward. 

\textit{Step 3. (ii) $\to$ (iii)}.
 We compute $w(r)$ using formula \rf{inverse0}, by finding $xy^k$ and substituing
 $e^t$ by $r$. Noticing that $y = \frac{n-2k}{k}(1+o(1))$, we obtain
 the expression for $w(r)$ from {\it (iii)}. Furthermore, by \rf{newtrans0},
$w'(r) = -\frac{w(r)y(t)}{r} = -\frac{w}{r}\frac{n-2k}{k}(1+o(1))$ which implies the expression
for $w'(r)$ from item {\it (iii)}.
 
 Let us show now that the constant $c_1$ from {\it Step 1} is strictly positive. 
 From the expression for $w'$ in item {\it (iii)}, it follows that
 $r^{n-k} w'(r)^k = c_4^k (1+o(1))$. Furthermore, integrating
 \rf{Eq:IVP:0} gives
 \aa{
 r^{n-k} w'(r)^k = r^{n-k}_0 w'(r_0)^k + \int_{r_0}^r
  s^{n-1}c_{n,k}^{-1}\,\la \rho(s)(-w(s))^q ds >  r^{n-k}_0 w'(r_0)^k >0, 
  \;\text{where}\; r_0>0.
 }Passing to the limit as $r\to+\infty$, we obtain that $c_4^k>0$. But 
 $c_4 \sim c_1^{\frac1{q-k}}$, which implies that $c_1>0$.
 
 \textit{Step 4. (iii) $\to$ (i)}. Since $w'(r) = -\frac{w(r)y(t)}{r}$,
one obtains $y(t) = \frac{n-2k}{k}(1+o(1))$.
By \rf{inverse0},
  \aa{
  x(t) =  c_{n,k}^{-1} r^{2k} \rho(r) (-w)^{q-k} y(t)^{-k} = c_1 r^\gm(1+o(1)),
  \;\text{with}\; r=e^t.
  }
  The above representations for $x(t)$ and $y(t)$ imply {\it (i)}.
  
  \textit{Step 5. (i) $\to$ (iv).}   It follows from Lemma \ref{inward}
   that $\ffi(t)\in G_+$ for all $t\in \Rnu$. 
   
   \textit{Step 6. (iv) $\to$ (i).} 
   Note that $P_3^+$ and $P_4^+$ {are always} in $G_-$. {Indeed,
   we already proved in Corollary \ref{pro1111} that $P_4^+\in G_-$. Further,
   we note that $G_0$ intersects the $x$-axis at
   $(0, \frac{q+1}{k+1}(n-2k))$ and $n+l_\infty< \frac{q+1}{k+1}(n-2k)$ by
   \rf{inq*1111}.}
   Now it follows from Corollary \ref{pro1111} that only point $P_2$ can belong
   to the $\om$-limit set of the orbit associated to a solution of \rf{P2P3}.

It remains to show \rf{haty2222}. By Lemma \ref{lemp2222}, 
in a neighborhood of $P_2$, $y$ is a function of $x$ (denoted by $\hat y(x)$).
From \rf{p2xy}, we observe that 
\aa{
\lim_{x \to 0+} \frac{\hat y(x)-\frac{n-2k}k}x
= \lim_{t\to +\infty} \frac{y(t) - \frac{n-2k}k}{x(t)}
= \frac{c_2}{c_1}.
}
Formula \rf{haty2222} {holds} now by Lemma \ref{lemp2222}.
Furthermore, since $\hat y'(0)= \frac{c_2}{c_1}$, we find that
$c_2= c_1 \big(\frac{k^2\gm}{n-2k}- k\big)$. At the same time, finding $c_2$ concludes the proof of Step 1.
\end{proof}

\subsubsection{Examples of $P_2$-solutions}
Here we give examples of radial $P_2$-solutions to Problem
\rf{P2P3} for some explicitely given weight functions $\rho(|x|)$.
Our examples also contain explicit expressions for the solutions. 
Remark that in both examples,  the asymptotic 
representations at a neighborhood of $+\infty$, obtained in Theorem \ref{p2222},
agree with the explicit formulas for the respective solutions.

{\bf Example 1.}
Let $q>k$ and $n>2k$. Define 
\[
u(x)=w(r)=-(1+r^2)^{-\te},\, r=\abs{x},
\]
where $\te=\frac{n-2k}{2k}$. An easy calculation shows that
\[
S_k(D^2 w)=c_{n,k}\frac{(2\te)^k(n-k)}{1+r^2}(1+r^2)^{-k(\te+1)}.
\]
Now put $k(\te+1)=\te q$, so that $\te=\frac{k}{q-k}$. Therefore, 
\[
w(r)=-(1+r^2)^{-\frac{n-2k}{2k}}
\]
is a $P_2$-solution of 
$S_k(D^2 w)=\rho(\abs{x})(-w)^q$, $x\in\mathbb{R}^n$,
 with $\rho(\abs{x})=c_{n,k}\frac{(\frac{n-2k}{k})^k(n-k)}{1+\abs{x}^2}$. In this case, $R(r)=-\frac{2r^2}{1+r^2}$. Hence, $l_0=0$ and $l_\infty=-2$. 

 {\bf Example 2.}
Let $c>0$ any constant and $\sigma>0$. Consider the family of \lq\lq Bliss functions\rq\rq
\[
w_c(\abs{x})=-\frac{K}{(c+\abs{x}^\frac{2k+\sigma}{k})^\frac{n-2k}{2k+\sigma}},
\]
where  $K=\left[c_{n,k}(n+\sigma)\left(\frac{n-2k}{k}\right)^k c\right]^\frac{n-2k}{(2k+\sigma)(k+1)}$. Then $w_c$ is a $P_2$-solution of 
\[
S_k(D^2 w)=\abs{x}^\sigma(-w)^q,\;\; x\in\mathbb{R}^n,
\]
where $q=q^*(k,\sigma)=\frac{(n+2)k+\sigma (k+1)}{n-2k}$. In this case, we have $l_0=l_\infty=\sigma$. 

\subsection{$P_3^+$-solutions}
In this section, we characterize $P_3^+$-solutions.

\subsubsection{$P_3^+$-solutions of  algebraic fast decay: Proof of Theorem \ref{t1111}}
Here we consider the case $l_\infty<-2k$ or $\dl>0$, the necessary condition
when fast decay $P_3^+$-solutions may exist.
\begin{proof}[Proof of Theorem \ref{t1111}]
\textit{Step 1. (i) $\to$ (ii):}
 The expression $x(t) = \nu_+(1+o(1))$ follows immediately from the
 definition of a $P_3^+$-solution. Next, define
\aa{
z(t) = \frac{\nu_+ - x}{y}.
}
One can rewrite the ODE \rf{1111y} as follows:
\aa{
\Big(\frac1y\Big)' - \dl \, \frac1y - \frac{z}{k} + 1 = 0,
}
whose solution is
\aaa{
\lb{eq1111}
y(t) = \frac{e^{-\dl t}}{e^{-\dl t_0} y(t_0)^{-1} + \int_{t_0}^t e^{-\dl s}\big(\frac{z(s)}{k} - 1\big) ds}, \quad t_0\in \Rnu.
}
Let us determine the behavior of the term $e^{-\dl t} z(t)$. Differentiating $z(t)$
and taking into account that $\nu_+ - x = zy$, we obtain
\mmm{
\lb{1111z}
\dot z = -\frac{\dot x}{y} - z\,\frac{\dot y}y = -\frac{x}{y}(yz + \zeta(t)  - qy) - z\Big(-\dl + \frac{x-\nu_+}k + y\Big)\\
=\Big(\dl +\frac{\nu_+ -x}k - y - x\Big) z + qx  - \frac{x}{y} \, \zeta(t).
}
Next, for a differentiable function $\ffi$, we have
\aaa{
\lb{zffi1111}
(z\ffi)' = z' \ffi + \ffi' z = \Big(\dl+\frac{\nu_+-x}k - y - x+ \frac{\ffi'}{\ffi}\Big) z\ffi
+ q x\ffi - \frac{x}{y} \, \zeta \,\ffi.
}
Taking $\ffi(t) = e^{-(\dl- \eps) t}$, we obtain 
\aa{
(ze^{-(\dl-\eps) t})' = \Big(\frac{\nu_+-x}k - y - x + \eps\Big)ze^{-(\dl-\eps) t} + q\,x\,e^{-(\dl-\eps) t} +  \frac{x e^{-\dl t}}{y} e^{\eps t} \zeta(t).
}
Choose $\eps\in (0,\min\{\frac{\teta}2,\nu_+,\dl\})$. With this choice,
$e^{2\eps t} \zeta(t) \to 0$ as $t\to +\infty$ and $\lim_{t\to+\infty}(\frac{\nu_+-x}k - y - x + \eps)= -\nu_+ + \eps < 0$. We need to analyze the factor $\frac{e^{-\dl t}}{y}$ appearing in the last term.
Equation \rf{1111y} implies
\aa{
\Big(\frac{e^{-(\dl+ \eps)t}}{y} \Big)' = \frac{e^{-(\dl+ \eps)t}}y  \Big(\frac{\nu_+-x}k - \eps\Big) -  e^{-(\dl+ \eps)t}.
}
By Lemma \ref{lem1111}, $\lim_{t\to+\infty} \frac{e^{-(\dl+ \eps)t}}{y} = 0$. Therefore, 
for all $\eps>0$,
\aaa{
\lb{ay111}
\frac{e^{-\dl t}}{y} = e^{\eps t} o(1).
}
By Lemma \ref{lem1111},
$\lim_{t\to+\infty} ze^{-(\dl-\eps) t} = 0$. This implies that as $t\to +\infty$,
\aa{
|z|e^{-\dl t} = e^{-\eps t} o(1).
}
Therefore, the integral in the denominator of \rf{eq1111} converges. 
There are two choices: 
\aaa{
\lb{a}  \tag{$a$}
 &e^{-\dl t_0} y(t_0)^{-1} + \int_{t_0}^{+\infty} e^{-\dl s}\big(\frac{z(s)}{k} - 1\big) ds \ne 0 \quad \text{for some} \; t_0\in \Rnu\\
 \lb{b} \tag{$b$}
 &e^{-\dl t} y(t)^{-1} + \int_{t}^{+\infty} e^{-\dl s}\big(\frac{z(s)}{k} - 1\big) ds = 0, \quad   \text{for all} \; t\in\Rnu.
}
If \rf{a} holds, we obtained \rf{yassi}. Remark that the constant $c$ is positive by  \rf{newtrans0}.
In the case \rf{b}, 
\aaa{
\lb{est1111}
\Big|\frac{1}{y e^{\dl t}}\Big| \lt \frac1{k} \int_t^{+\infty} |z| e^{-\dl s} ds + \int_t^{+\infty} e^{-\dl s} ds \lt K_\eps e^{-\eps t}
}
for some constant $K_\eps>0$, depending on $\eps$. On the other hand, \rf{lv1111} implies that for any $\hat \eps>0$,
\aa{
(y e^{(\dl-\hat \eps)t})' = \Big(\frac{x-\nu_+}k + y - \hat \eps\Big)y e^{(\dl-\hat \eps)t}. 
}
By Lemma \ref{lem1111}, $\lim_{t\to+\infty} y e^{(\dl-\hat\eps)t} = 0$.  Therefore, for all $\hat \eps>0$, 
\aa{
y e^{\dl t} = e^{\hat \eps t} o(1).
}
Inequality \rf{est1111} can be written as $y e^{\dl t} O(1) = e^{\eps t}$.
Substituting there $y e^{\dl t}$ from the previous identity, we obtain
\aa{
e^{(\eps - \hat\eps)t}= o(1) \quad \text{for all} \;\; \eps, \hat\eps>0 
\;\; \text{sufficiently small.}
}
Clearly, the above identity cannot hold for all (even sufficiently small) $\eps, \hat\eps>0$. 
This implies that the case \rf{b} cannot be realized. 
Thus, we obtained {\it (ii)}.

\textit{Step 2. (ii) $\to$ (i)}. This implication is straighforward. 

\textit{Step 3. (ii) $\to$ (iii).} We apply formula \rf{inverse0}.
 First, we note that 
 $y^k x = \nu_+c^ke^{-\dl kt}(1+o(1))$.
Substituting this expression into \rf{inverse0}, by Lemma \ref{krho}, we obtain
\aa{
w(r) = - c_1\big(\rho(r)r^{2k+\dl k}\big)^{-\frac1{q-k}} (1+o(1)) = -c_1(1+o(1)),
}
where $c_1 = \big(\frac{\nu_+c_{n,k} c^k}{ \, c_\rho}\big)^{\frac1{q-k}}$. Above, we used
the identity
$2k+\dl k = -l_\infty$ following from the definition of $\dl$. 
Next, by \rf{newtrans0},  $w'(r) = -w(r) y(\ln r) r^{-1}$ which immediately implies the second expression in item {\it (iii)}. Integrating the expression for $w'$ from $r$ to $+\infty$
and taking into account that $w(+\infty) = -c_1$, we obtain the expression for $w(r)$
in  {\it (iii)}.

\textit{Step 4. (iii) $\to$  (i)}.  It is straightforward to obtain \rf{yassi} 
by the second formula in \rf{newtrans0} and the expressions for $w$ and $w'$ in {\it (iii)}.
Indeed,
$y(t) = r \frac{w'}{-w} = \frac{c_2}{c_1} e^{-\dl t}(1+o(1))$. 
Furthermore, by the first formula in \rf{newtrans0},
\aaa{
\lb{xt11112222}
x(t)= c_{n,k}^{-1}  \, r^k \rho(r)\Big(\frac{-w}{w'}\Big)^k (-w)^{q-k} = \nu_+(1+o(1)).
} 
Thus, we proved that $(x(t),y(t))$ tends to $P_3^+$ as $t\to +\infty$.

\textit{Step 5. (i) $\to$  (v)}.  Note that $\lim_{t\to+\infty} W(x(t),y(t)) = 
-\frac{n-2k}{k} + \frac{n+l_\infty}{k} < 0$ which immediately implies {\it (v)}.

\textit{Step 6. (i) $\to$  (iv)}. Note that $W(x,y)<0$ is equivalent to 
$\frac{k}{n-2k} y - 1 < -\frac{x}{n-2k}$. The latter implies that $G(x,y)<0$. Therefore,
$W_-\sub G_-$.

 \textit{Step 7. (iv) $\to$  (i)}. 
 By Corollary \ref{pro1111},  the $\om$-limit set of a trajectory of \rf{lv1111}
 is either $P_2$ or $P_3^+$.  Suppose it is $P_2$.
 Then, item {\it (iv)} of Theorem \ref{p2222} contradicts to  item {\it (iv)} 
 of the current theorem. Therefore, $\ffi(t)$ is a $P_3$-solution.

 \textit{Step 8. (v) $\to$  (i)}. This follows from the inclusion $W_-\sub G_-$ 
shown above.

Now we let $(\rho.6)$ be in force and let us obtain \rf{x1111}.

\textit{Step 9: Asymptotic  representation for $x$ under $(\rho. 6)$-(1).}
Recall that $\cpp= \lim_{t\to+\infty} e^{\dl t}\zeta(t)$.
Since $\nu_+>\dl$, from \rf{1111z}, by Lemma \ref{lem1111}, we obtain
that $\lim_{t\to+\infty} z(t) = \frac{(q-\sfrac{\cpp}{c})\nu_+}{\nu_+-\dl}$. Therefore,
\aa{
\nu_+ - x = yz = \frac{(qc-\cpp)\nu_+}{\nu_+ - \dl} e^{-\dl t} (1+o(1)).
}
which is the same as the first expression in \rf{x1111}.

\textit{Step 10: Asymptotic  representation for $x$ under $(\rho. 6)$-(2).}
Applying \rf{zffi1111} with $\ffi(t) = \frac1{e^{\dl t}\zeta(t)}$
and introducing $\td z = \frac{z}{e^{\dl t}\zeta}$, we obtain 
\aa{
\td z' = \Big(\frac{\nu_+-x}k - y - x - \frac{\zeta'}{\zeta}\Big) \td z 
+ \frac{qx}{e^{\dl t}\zeta} - \frac{x(1+o(1))}{c}.
}

By Lemma \ref{lem1111},
 $\lim_{t\to+\infty} \td z = -\frac{\nu_+}{c(\nu_+ - \hat \nu)}$.
This implies that 
\aa{
& z(t) = - e^{\dl t} \zeta(t)  \frac{\nu_+}{c(\nu_+ - \hat \nu)} (1+o(1)),\\
&\nu_+ - x = zy =  -\frac{\nu_+}{\nu_+ - \hat \nu} \zeta(t)(1+o(1)).
}Hence, the second formula in \rf{x1111} holds.

 \textit{Step 11: Obtaining \rf{haty1111}.}
First, we note that under $(\rho. 6)$-(1), $x$ can be represented as a function of $y$
which we denote by $\hat x(y)$. Indeed,
it follows from \rf{lv1111} that in a neighborhood of $+\infty$, $\dot y<0$, 
and hence, one can express $t$ as a function of $y$. 
Let us show now that under $(\rho. 6)$-(2), in a neighborhood of $+\infty$,
$y$ can be represented as a function of $x$.
Indeed,  $e^{-\dl t} = o(\zeta(t))$ and
 $\dot x = x\big(-\frac{\hat \nu}{\nu_+ - \hat \nu}\zeta(t) +o(\zeta(t))\big)$.
Therefore, in a neighborhood of $+\infty$,  $\dot x < 0$ and we can express
$t$ as a function of $x$. 
Finally, we compute $\hat x'(0)$ as
$\lim_{y\to 0+} \frac{\hat x(y)-\nu_+}{y} = \lim_{t\to+\infty}\frac{x(t) - \nu_+}{y(t)}$ and 
 $\hat y'(\nu_+)$ as
$\lim_{x\to \nu_+} \frac{\hat y(x)}{x-\nu_+} = \lim_{t\to+\infty}\frac{y(t)}{x(t) - \nu_+}$ by 
 \rf{yassi} and \rf{x1111}. These computations imply \rf{haty1111}.
\end{proof}

\subsubsection{$P_3^+$-solutions of slow log-like decay: Proof of Theorem 
\ref{bl1111}}
Recall that the necessary condition for slow-decay $P_3^+$-solutions to exist
is $l_\infty = -2k$ (or $\dl =0$).
\begin{proof}[Proof of Theorem \ref{bl1111}]
{\it Step 1. (i) $\to$  (ii).}
Formula \rf{eq1111} with $\dl=0$ implies
\aaa{
\lb{yy1111}
y = \frac{\frac1t}{\frac1{t\, y(t_0)} + \frac1t \int_{t_0}^t \big(\frac{z(s)}{k} - 1\big) ds}.
}
Let us derive an asymptotic representation for $z(t)$ by formula \rf{1111z}.
By \rf{ay111}, for all $\eps>0$, $\frac1{y} = e^{\eps t} o(1)$.
Note that the condition $k<\frac{n}2$ is equivalent to the condition $\nu_+>0$.
From \rf{1111z} and Lemma \ref{lem1111}, it follows that
\aa{
z(t) = q(1+o(1)).
}
Therefore, in \rf{yy1111}, the integral diverges. By L'Hopital's rule,
\aa{
y(t) = \frac1t \frac{k}{q-k}(1+o(1)).
}
Finally, 
$\nu_+ - x = n - 2k -x = zy = \frac{1}t \frac{q k}{q-k}(1+o(1))$.

{\it Step 2. (ii) $\to$  (i).} This implication is straightforward. 

{\it Step 3.  (ii) $\to$  (iii).} Note that
$x(1+o(1)) = (n-2k)(1+o(1))$ and $y^k x = \frac1{t^k}\big(\frac{k}{q-k}\big)^k(n-2k)(1+o(1))$.
This immediately implies the expresssion for $w(r)$ in {\it (iii)}. The second expression
in {\it (iii)} is implied by the formula $w'(r) = - w(r) y(\ln r) r^{-1}$.

{\it Step 4.  (iii) $\to$  (i).}
It is straightforward to obtain the expression for $y$, given in item {\it (ii)},
by the second formula in \rf{newtrans0} and the expressions for $w$ and $w'$ in {\it (iii)}.
Furthermore, by \rf{xt11112222},  $x(t) =  \nu_+(1+o(1))$. This implies that
$(x(t),y(t))\to P_3^+$ as $t\to +\infty$.
 
{\it Step 5.  (ii) $\to$  (v)}. Note that 
$W_- = \big\{y+\frac{x}k < \frac{n-2k}k\big\}$.
Representation {\it (ii)} implies
\aa{
y+\frac{x}k = \frac{n-2k}k -\frac{q}{q-k} \frac1t(1+o(1)) + \frac{k}{q-k} \frac1t(1+o(1))
=  \frac{n-2k}k - \frac1t(1+o(1))
}
which immediately implies {\it (v)}.

{\it Step 6.  (v) $\to$  (iv)}. This implication is straighforward since $W_-\sub G_-$.

{\it Step 7.  (iv) $\to$  (i)}. The argument  of {\it Step 7}
of the proof of Theorem \ref{t1111} works here without changes.

It remains to prove \rf{yn2k1111}.
The second equation in \rf{LVSrho}, along with the representations for
$x(t)$ and $y(t)$ from item {\it (ii)}, implies that $\dot y = -y(\frac1{t} + o(\frac1{t}))$.
Therefore, $\dot y<0$ at some neighborhood of $+\infty$ and we can express
$t$ as a function of $y$. This implies that $x$ is a function of $y$, which we denote
by $\hat x(y)$.
By using the representations for $x(t)$ and $y(t)$, we obtain
\aa{
\hat x'(0) = \lim_{y\to 0} \frac{\hat x(y)-(n-2k)}{y}
= \lim_{t\to +\infty} \frac{x(t) - (n-2k)}{y(t)} = - q.
}
From here and item  {\it (ii)}, we obtain the existence of the limit 
$\lim_{y\to 0} \hat x'(y) = \lim_{t\to+\infty} \frac{\dot x}{\dot y} = - q = \hat x'(0)$. 
By the continuity of $\hat x'(y)$ in $0$, we obtain that $\hat x(y)$ is decreasing in a neighborhood of $0$; therefore, $y$ is a function of $x$ in a neighborhood of $n-2k$.
As before, we denote this function by $\hat y(x)$. Formula
\rf{yn2k1111} follows now from the above expression for $\hat x'(0)$.
\end{proof}

\subsection{$P_4^+$-solutions}
It was established in Corollary \ref{pro1111} that $P_4^+$-solutions
can exist only if $l_\infty>-2k$ or, equivalently, $\dl<0$. 
Below, we prove Theorem \ref{7777} characterizing $P_4^+$-solutions.

\subsubsection{Proof of Theorem \ref{7777}}
\begin{proof}
{\it Step 1: (i) $\to$ (ii).} Since $x(t) = \td x (1+o(1))$ and $y(t) = \td y(1+o(1))$, 
item {\it (ii)} follows from \rf{newtrans0},  \rf{inverse0},  and Lemma \ref{krho}.

{\it Step 2: (ii) $\to$ (i)}. From \rf{newtrans0} and  \rf{inverse0},  it follows that
$x(t) = \td x (1+o(1))$ and $y(t) = \td y (1+o(1))$, which implies {\it (i)}.

{\it Step 3: (i) $\to$ (iii).}  As it was shown in Corollary \ref{pro1111},
$P_4^+\in G_-$ and is located
in the first quadrant (since $l_\infty>-2k$). This proves {\it (iii)}.

{\it Step 4. (iii) $\to$ (i).} 
By Corollary \ref{pro1111}, $w(r)$ can be either $P_2$ or $P_4^+$ solution. 
By Theorem \ref{p2222},  if $w(r)$ is a $P_2$-solution, then $\ffi(t) \in G_+$ for all $t$.
Therefore, $w(r)$ is a $P_4^+$-solution.

\textit{Step 5: (i) $\to$ (iv) under condition \rf{d1111}}. 
By the results of Subsection \ref{lineariz1111}, condition \rf{d1111} 
implies that the roots of the characteristic equation of the linearized
system \rf{limit1111}, denoted by $\la_1$ and $\la_2$, are real, negative, and different, so $P_4^+$ is a stable node. Let $\la_1>\la_2$.
In a  neighborhood of $P_4^+$, one has the representation 
of \rf{LVSrho} as a time-dependent perturbation of  \rf{limit1111}
given by \rf{perturba} with $\bar x = x-\td x$, $\bar y = y-\td y$, where
$(\td x,\td y)$ is given by \rf{interiorcritipoint:plus}
and the matrix $A$ given by \rf{a4444}.
As in the case of $P_2$-solutions, we
rewrite \rf{perturba} with respect to $\psi(t) = (\bar x(t),\bar y(t))$ as follows:
\aa{
\dot \psi(t) = A\psi(t) + \dlt(\psi(t)) + \eta(t,\psi(t)),
}
where $\dlt(\psi(t))$ and $\eta(t,\psi(t))$ denote the two last terms (respectively) 
on the right-hand side.

Let $v_1$ and $v_2$ be the unit eigenvectors corresponding to $\la_1$ and $\la_2$,
respectively. Since the roots 
are real and different,  $v_1$ and $v_2$ are non-colinear. 
Therefore, each vector $z\in\Rnu^2$ is a linear combination $z= z_1 v_1 + z_2 v_2$.
Define the projection operators $P_1z = z_1v_1$ and $P_2 z = z_2v_2$.
By the continuity of $P_1$ and $P_2$, there
exists a constant $C>0$ such that
\aa{
 |P_i z| \lt C |z|, \quad i=1,2.
}
In the same manner as in the proof of Theorem \ref{p2222}, one can write
equation \rf{psi2222}. Setting $\tau = \bar \tau$, we transform \rf{psi2222} as follows:
\mmm{
\lb{ij2222}
e^{-\la_1 t}\psi(t) = (e^{-\la_1\tau}P_1 + e^{-\la_2\tau}P_2)\psi(\tau) +
 e^{(\la_2-\la_1) t}P_2 \psi(\tau)
+ \int_{\tau}^t e^{-\la_1s}P_1 \big(\dlt(\psi(s))+  \eta(s,\psi(s))\big) ds\\
+ e^{-\la_1 t} \int_{\tau}^t e^{\la_2(t-s)}P_2 \big(\dlt(\psi(s))+  \eta(s,\psi(s))\big) ds
= (e^{-\la_1\tau}P_1 + e^{-\la_2\tau}P_2)\psi(\tau) \\
 + e^{(\la_2-\la_1) t}P_2 \psi(\tau)  + I(t) + J(t).
}
Note that $\lim_{t\to+\infty} e^{(\la_2-\la_1) t}P_2 \psi(\tau)  = 0$.
Let us show that $I(t)$ and $J(t)$ converge to finite limits.  
To this end, we find an exponentially decaying 
estimate for $|\psi(t)|$ by using Lemma 7.1 from \cite{BattLi}. First, we find $\sg>0$
such that 
\aaa{
\lb{beta1111-}
\beta: = \frac{C\sg}{|\la_1|} < 1.
}
Estimate \rf{estimate1111} can be obtained likewise. 
This estimate and equation \rf{ij2222} imply that there exist  a constant $K_1>0$
such that
\aa{
|\psi(t)| \lt K_1 e^{\la_1 t} + C\sg \int_\tau^t e^{\la_1 (t-s)}|\psi(s)| ds. 
}
By Lemma 7.1 from \cite{BattLi}, there exist a constant $K_2>0$ such that
$|\psi(t)| \lt K_2 e^{(\frac{C\sg}{1-\beta} + \la_1)t}$. 
Furthermore, estimates \rf{est2222} can be obtained likewise; namely,
$|\dlt(\psi(t))| \lt (1+q) |\psi(t)|^2$ and  $|\eta(t,\psi(t))| \lt K_3e^{-\teta t} |\psi(t)|$
for some constant $K_3>0$. Therefore,
\aaa{
& |I(t)| \lt K_4 \int_\tau^t \big(e^{(\frac{2C\sg}{1-\beta} + \la_1)s} + e^{(\frac{C\sg}{1-\beta} - \teta)s}\big) ds,\nonumber \\
& |J(t)| \lt K_5 \, e^{(\la_2-\la_1) t} \int_{\tau}^t 
 \big(e^{(\frac{2C\sg}{1-\beta} + 2\la_1-\la_2)s} 
 + e^{(\frac{C\sg}{1-\beta} - \teta+\la_1-\la_2)s}\big) ds,
 \lb{j1111}
}
where $K_4,K_5>0$ are constants.
In \rf{beta1111-}, we can take $\sg>0$ even smaller so that $\frac{2C\sg}{1-\beta} + \la_1<0$
and $\frac{C\sg}{1-\beta} - \teta<0$. With this choice of $\sg$,
the integral $I(+\infty)$ converges, so $\lim_{t\to+\infty} I(t)$ is finite. 
Furthermore, integrating in the right-hand side of \rf{j1111} and taking into account the factor
$e^{(\la_2-\la_1) t}$, we conclude that $\lim_{t\to+\infty} J(t) = 0$. 
Therefore, $\lim_{t\to+\infty} e^{-\la_1 t} \psi(t)$ exists.
We denote it by $\vec{c_1}{c_2}$. 

\textit{Step 6: (iv) $\to$ (i) under condition \rf{d1111}.} This implication is straightforward. 
%
\end{proof}

\subsection{Weights satisfying the entire set of assumptions $(\rho.1)$--$(\rho.6)$}
 \lb{examples2222}
Example 1 below describes a construction of the weight $\rho$ based
on the choice of the parameters $\dl$ and $\teta$ and 
the function $\psi$ with values in a closed subinterval
of $(0,+\infty)$.

\subsubsection{Example 1: General construction of the weight function $\rho(r)$.} 
Here we describe how to choose the function $R(r)$, and then give a formula 
explicitely defining $\rho(r)$ via $R(r)$, so that assumptions
 ($\rho.1$)--($\rho.3$) and $(\rho. 4)$--$(\rho. 6)$ are satisfied.

Let $R(r)$ be a continuous function on $(0,+\infty)$ such that
$l_0 = \lim_{r\to 0} R(r)$ exists and $l_0\gt R(r)$ for all $r>0$.
We specifiy properties of $R(r)$
at neighborhoods of $+\infty$ and $0$. We note that these additional
properties in a neighborhood of $0$ are needed only if we want to satisfy $(\rho.3)$;
in the current section on $P_2$-, $P_3^+$\!-, and $P_4^+$\!-solutions, we
do not use $(\rho.3)$.
Outside of neighborhoods of $0$ and $+\infty$, the behavior of $R(r)$
can be arbitrary.

First of all, choose parameters $\dl\in \Rnu$ and $\teta>0$ satisfying
\aa{
\dl < \min\Big\{ \frac{n-2k}{k+1}, \teta\Big\} \;\; \text{or} \;\;
\teta < \min\{ \dl, n-2k - \dl k\} \;\; \text{or} \;\; \dl = \teta <  \frac{n-2k}{k+1}.
}
The latter is equivallent to the condition $\nu_+>\min\{\dl,\teta\}$ from $(\rho. 6')$.
Define $l_\infty = -\dl k - 2k$ and choose $l_0\gt 0$ in such a way 
that $q > q^*(k,l_0)$.

\textit{Construction of $R(r)$ at a neighborhood of $+\infty$.}
Choose $\psi(r)$ (in a neighborhood of $+\infty$) taking values in a closed subinterval
of $(0,+\infty)$. 
Define $R(r) = r^{-\teta} \psi(r) + l_\infty$. 
If $\teta = \dl$, we also assume the existence of a finite limit $\lim_{r\to+\infty} \psi(r)$.

\textit{Construction of $R(r)$ at a neighborhood of $0$.}
This step is necessary only if we need to satisfy assumption $(\rho.3)$; otherwise,
we do not need to specify the behavior of $R(r)$ in a neighborhood of $0$.
Choose $R(r)$ in such a way that 
\aa{
\text{the integral} \; \int_0^r \frac{R(s) - l_0}{s}\, ds \;\;\;  \text{converges.}
}
Remark that the convergence of the above integral implies that 
$\lim_{r\to 0+} R(r) = l_0$.
For the function $K(r)=r^{-l_0}\rho(r)$ we have
\aa{
\ln K(r) = \ln K(\epsilon) + \int_{\epsilon}^r  \frac{R(t) - l_0}{t} dt.
}
By the above choice of $R(\fdot)$, $\lim_{\epsilon\to 0+} K(\epsilon)>0$ exists;
it will be referred to below as $K(0)$. 

\textit{Formulas for $\rho(r)$.} There are several
ways to compute $\rho(r)$ if we know $R(r)$. Indeed, the definition of $R(\fdot)$
is equivalent to $(\ln \rho(r))' = \frac{R(r)}{r}$. 
If we need to satisfy $(\rho.3)$, then $\rho(r)$
can be computed by formula \rf{rho1111}. If we do not need to satisfy $(\rho.3)$,
then we use Lemma \ref{krho}. Introduce the function $\mc K(r) = \rho(r) r^{-l_\infty}$.
One easily verifies that $(\ln \mc K(r))' = \frac{R(r) - l_\infty}{r}$. 
By $(\rho.5)$, $\int_r^{+\infty} \frac{R(s) - l_\infty}{s} ds \to 0$ as $r\to +\infty$.
Integrating the aforementioned identity from $r$ to $+\infty$ and noticing that,
by Lemma \ref{krho}, $\mc K(+\infty) = c_\rho$, we obtain the formula
for computing $\rho(r)$:
\aa{
\rho(r) = c_\rho r^{l_\infty} \exp\Big\{\int_r^{+\infty} \frac{l_\infty - R(s)}{s} ds \Big\}.
}

Let us verify the assumptions ($\rho.1$)--($\rho.3$) and $(\rho. 4)$--$(\rho. 6)$.
First, we note that ($\rho.1$), ($\rho.2$) and $(\rho. 4)$, $(\rho. 5)$
are clearly satisfied. Assumption $(\rho. 6)$ is fulfilled  by Corollaries \ref{cor1111} and \ref{cor2222}. Finally, if we are interested in weights satisfying $(\rho.3)$,
we notice that by \rf{rho1111} and since $R(r)\lt l_0$ for all $r >0$,
$K(r) \lt K(0)$.

\subsubsection{Example 2: Concrete examples of $\rho(r)$.} 
Define $\rho(r) = \frac{\al r^\beta}{\td \al + r^\gm}$, where
$\al,\td \al, \beta, \gm$ are postitive constants. In this case,
$R(r) = \beta - \frac{\gm r^\gm}{\td \al + r^\gm}$ so that $l_0 = \beta$
and $l_\infty = \beta - \gm$. Furthermore, 
$R(r) - l_\infty = \frac{\gm r^{-\gm}}{r^{-\gm} + \td \al^{-1}}$, and hence, $\teta = \gm$
and $\psi(r) = \frac{\gm}{r^{-\gm} + \td \al^{-1}}$. By Corollary \ref{cor1111},
we have to choose $\beta$ and $\gm$ in agreement with the inequality
$n+\beta-\gm > \min\{\dl,\gm\} = \dl$ since $\dl = \frac{\gm-\beta}{k} - 2$.
Therefore, $\beta$ and $\gm$ have to satisfy
$\beta - \gm > -\frac{k(n+2)}{k+1}$. 
Another example is $\rho(r) = r^\sg$, where $\sg>0$ and such that
$q>q^*(k,\sg)$. In this case $R(r) = \sg$, so $(\rho.1)$--$(\rho.6)$
are obviously satisfied.

\subsection*{Appendix}
\subsubsection*{Derivation of the Lotka-\!Volterra system}

We transform the equation $c_{n,k}r^{1-n}(r^{n-k}(w_r)^k)_r=f(r,w+1)$, where
$f(r,w+1):=\la \rho(r)(-w)^q$,
 into a Lotka-Volterra-type system. For this, define $\Phi(r):=r^{n-k}(w_r)^k$, $g(r,w):=c_{n,k}^{-1}r^{n-1}f(r,w+1)$, and introduce the variables
\aaa{
\lb{lv3333}
x(t)=r^k\frac{c_{n,k}^{-1}f(r,w+1)}{(w_r)^k},\;\; y(t)=r\frac{w_r}{-w},\;\; t=\ln(r).
}
The change of variable \rf{lv3333} was first introduced by Milne \cite{milne30,milne32} for the case $k=1$ and $\rho\equiv 1$. Derivating $x(t)$ in \rf{lv3333}, we obtain
\begin{eqnarray*}
x'&=&\frac{dx}{dr}\frac{dr}{dt}=r\left(\frac{rg}{\Phi}\right)_r=\frac{r}{\Phi^2}\left [\Phi(r(g_r+g_ww_r)+g)-rg\Phi_r\right]\\
&=&\frac{rg_wrw_r}{\Phi}+\frac{r^2g_r+rg}{\Phi}-\frac{(rg)^2}{\Phi^2}\\
&=&-\frac{ug_wr}{\Phi}\frac{rw_r}{-w}+\frac{rg(r\frac{g_r}{r}+1)}{\Phi}-\left(\frac{rg}{\Phi}\right)^2\\
&=&-\frac{rg\,\frac{ug_w}{g}}{\Phi}\frac{rw_r}{-w}+\left(r\frac{g_r}{g}+1\right)\frac{rg}{\Phi}-\left(\frac{rg}{\Phi}\right)^2\\
&=&-w\frac{g_w}{g}xy+\left(r\frac{g_r}{g}+1\right)x-x^2\\
&=&x\left(\left(r\frac{g_r}{g}+1\right)-x-w\frac{g_w}{g}y\right)
= x(\nu(t) - x -qy),
\end{eqnarray*}
where the last equality follows from the definition of $g$. Next,
we note that $c_{n,k}^{-1}r^{n-1}f(r,w+1)=\Phi_r$. Furthermore,
\begin{eqnarray*}
\Phi_r&=&r^{n-2k}(k(-wy)^{k-1}(-wy'e^{-t}-w_ry))+(n-2k)r^{n-2k-1}(-wy)^k\\
&=&r^{n-2k}(-k\frac{u}{r}y'(-wy)^{k-1}-kw_ry(-wy)^{k-1})+(n-2k)r^{n-2k-1}(-wy)^k\\
&=&r^{n-2k}\left(\frac{k}{r}\frac{y'}{y}(-wy)^k-ky\left(-\frac{uy}{r}\right)(-wy)^{k-1}\right)+(n-2k)r^{n-2k-1}(-wy)^k\\
&=&r^{n-2k-1}(-wy)^k\left (k\frac{y'}{y}-ky+n-2k\right)\\
&=&r^{n-2k-1}(rw_r)^k\left (k\frac{y'}{y}-ky+n-2k\right).
\end{eqnarray*}
The definition of $x(t)$ by \rf{lv3333} and the last equality imply that
$k\frac{y'}{y}-ky+n-2k=x$, which is equivalent to 
$y'=y\left(-\frac{n-2k}{k}+\frac{x}{k}+y\right)$.

\subsection*{Acknowledgements} 
 J.M.~do~\'O acknowledges partial support  from 
CNPq through the grants 312340/2021-4  and 
429285/2016-7 and 
Para\'iba State Research Foundation (FAPESQ), grant no 3034/2021.
J. S\'anchez is supported in part by ANID Fondecyt {Grant No.} 1221928.
 E. Shamarova acknowledges partial support from 
Universidade Federal da Para\'iba
 (PROPESQ/PRPG/UFPB) through the grant 
PIA13631-2020 (public calls no 03/2020 and 06/2021).


\Addresses


\end{document}